\newtheorem{thm}{Theorem}[section]
\newtheorem{claim}[thm]{Claim}
\newtheorem{conj}[thm]{Conjecture}
\newtheorem{cor}[thm]{Corollary}
\newtheorem{lemma}[thm]{Lemma}
\newtheorem{obs}[thm]{Observation}
\newtheorem{prob}[thm]{Problem}
\newtheorem{quest}[thm]{Question}
\theoremstyle{definition}
\newtheorem{example}[thm]{Example}
\newtheorem{case}{Case}
\newtheorem{subcase}{Case}
\renewcommand{\le}{\leqslant}
\renewcommand{\ge}{\geqslant}
\newcommand{\FF}{\mathcal{F}}
\newcommand{\les}{\le_\sigma}
\newcommand{\Ps}{P_\sigma}
\newcommand{\height}{h}
\newcommand{\width}{w}
\title{On the number of monotone sequences}
\date{\today}
\author{Wojciech Samotij}
\address{School of Mathematical Sciences, Tel Aviv University, Tel Aviv 69978, Israel; and Trinity College, Cambridge CB2 1TQ, UK}
\email{samotij@post.tau.ac.il}
\author{Benny Sudakov}
\address{Department of Mathematics, ETH, 8092 Zurich, Switzerland}
\email{benjamin.sudakov@math.ethz.ch}
\thanks{Research supported in part by: (WS) Trinity College JRF and 
Institute for Mathematical Research (FIM), ETH Z\"urich; (BS) SNSF grant 200021-149111 and by a USA-Israel BSF grant.}
\begin{document}

\maketitle

\begin{abstract}
One of the most classical results in Ramsey theory is the theorem of Erd{\H{o}}s and Szekeres from 1935, which says that every sequence of more than $k^2$ numbers contains a monotone subsequence of length $k+1$. We address the following natural question motivated by this result: Given integers $k$ and $n$ with $n \ge k^2+1$, how many monotone subsequences of length $k+1$ must every sequence of $n$ numbers contain? We answer this question precisely for all sufficiently large $k$ and $n \le k^2 + c 
k^{3/2} / \log k$, where $c$ is some absolute positive constant.
\end{abstract}

\section{Introduction}

\label{sec:introduction}

A typical problem in extremal combinatorics has the following form: What is the largest size of a structure which does not contain any forbidden configurations? Once this extremal value is known, it is very natural to ask how many forbidden configurations one is guaranteed to find in every structure of a certain size which is larger than the extremal value. There are many results of this kind. Most notably, there is a very large body of work on the problem of determining the smallest number of $k$-vertex cliques in a graph with $n$ vertices and $m$ edges, attributed to Erd{\H{o}}s and Rademacher; see~\cite{Er62, Er69, ErSi83, LoSi83, Ni11, Ra08, Re12}. In extremal set theory there, is an extension of the celebrated Sperner's theorem, where one asks for the minimum number of chains in a family of subsets of $\{1, \ldots, n\}$ with more than $\binom{n}{\lfloor n/2 \rfloor}$ members; see~\cite{DaGaSu13, DoGrKaSe13, ErKl74, Kl68}. Another example is a recent work in \cite{DaGaSu14}, motivated by the classical theorem of Erd\H{o}s, Ko, and Rado. It studies how many disjoint pairs must appear in a $k$-uniform set system of certain size.

One can ask analogous questions in Ramsey theory. Once we know the maximum size of a~structure which does not contain some unavoidable pattern, we may ask how many such patterns are bound to appear in every structure whose size exceeds this maximum. This direction of research has also been explored in the past. For example, a well-known problem of Erd{\H{o}}s is to determine the minimum number of monochromatic $k$-vertex cliques in a $2$-coloring of the edges of $K_n$; 
see, e.g.,~\cite{Co12, FrRo93, Th89}. This may be viewed as a natural extension of Ramsey's theorem.

In this paper, we consider a similar generalization of another classical result in Ramsey theory, the famous theorem of Erd{\H{o}}s and Szekeres~\cite{ErSz35}, which states that for every positive integer $k$, any sequence of more than $k^2$ numbers contains a monotone (that is, monotonically increasing or monotonically decreasing) subsequence of length $k+1$. To be more precise, we shall be interested in the following very natural problem.

\begin{prob}
  \label{prob:main}
  For every $k$ and $n$, determine the minimum number of monotone subsequences of length $k+1$ in a sequence of $n$ numbers.
\end{prob}

It is not clear when Problem~\ref{prob:main} was originally posed. It appears first in print in a paper of Myers~\cite{My02}, who attributes it to Albert, Atkinson, and Holton. It follows from the aforementioned theorem of Erd{\H{o}}s and Szeker{\'e}s that every sequence of $n$ numbers contains at least $n-k^2$ monotone subsequences of length $k+1$. When $n \le k^2+k$, this is easily seen to be sharp by considering a sequence built from $k$ increasing sequences of lengths $k$ or $k+1$ by concatenating them in decreasing order, such as the sequence $\tau_{k,n}$ defined below. Without loss of generality, we may restrict our attention to sequences that are permutations of the set $\{1, \ldots, n\}$, which we shall from now on abbreviate by~$[n]$.

Let us denote by $S_n$ the set of all permutations of $[n]$. Following~\cite{My02}, given a permutation $\sigma \in S_n$, we let $m_k(\sigma)$ denote the number of monotone subsequences of length $k+1$ in $\sigma$ and let
\[
m_k(n) = \min\{m_k(\sigma) \colon \sigma \in S_n \}.
\]
In order to give an upper bound on $m_k(n)$, consider the permutation $\tau_{k,n}$ described by
\[
\begin{array}{llll}
  \lfloor (k-1)n/k \rfloor + 1, &  \lfloor (k-1)n/k \rfloor + 2, & \ldots, & n \\
  \lfloor (k-2)n/k \rfloor + 1, &  \lfloor (k-2)n/k \rfloor + 2, & \ldots, & \lfloor (k-1)n/k \rfloor \\
  \vdots & \vdots & & \vdots \\
  1, & 2, & \ldots, & \lfloor n/k \rfloor.
\end{array}
\]
Let $r_{k,n}$ be the unique number $r \in \{0, \ldots, k-1\}$ satisfying $r \equiv n \pmod k$. Since $\tau_{k,n}$ contains no decreasing subsequences of length $k+1$, it is easy to see that
\begin{equation}
  \label{eq:mktau}
  m_k(\tau_{k,n}) = r_{k,n} \binom{\lceil n/k \rceil}{k+1} + (k-r_{k,n}) \binom{\lfloor n/k \rfloor}{k+1}.
\end{equation}

It seems quite natural to guess that $m_k(n) = m_k(\tau_{k,n})$ for all $k$ and $n$, that is, that $\tau_{k,n}$ contains the minimum number of monotone subsequences of length $k+1$ among all permutations of $[n]$. This was conjectured by Myers~\cite{My02}, who noticed that Goodman's formula~\cite{Go59}, indeed proves that $m_2(n) = m_2(\tau_{2,n})$ for all $n$ and yields a characterisation of all permutations achieving equality.

\begin{conj}[Myers~\cite{My02}]
  \label{conj:main}
  Let $n$ and $k$ be positive integers. In any permutation of $[n]$, there are at least $m_k(\tau_{k,n})$ monotone subsequences of length $k+1$.
\end{conj}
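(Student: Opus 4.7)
My plan is to prove the conjecture in the range $n \le k^2 + ck^{3/2}/\log k$ promised by the abstract, via a Dilworth decomposition argument. The proof splits into two cases according to the length $d$ of a longest decreasing subsequence of $\sigma$; by reversing positions in $\sigma$ (which swaps increasing and decreasing subsequences), I may assume that $d$ is at most the length of a longest increasing subsequence.

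\textbf{Easy case, $d \le k$.} By Dilworth's theorem applied to the two-dimensional permutation poset (chains $=$ increasing subsequences, antichains $=$ decreasing subsequences), $\sigma$ admits a partition into $d \le k$ increasing subsequences $B_1, \ldots, B_d$. Padding with empty sets, I obtain exactly $k$ parts of sizes $n_j = |B_j|$ with $\sum_{j=1}^k n_j = n$. Each $B_j$ alone contributes $\binom{n_j}{k+1}$ distinct increasing $(k+1)$-subsequences of $\sigma$, so $m_k(\sigma) \ge \sum_{j=1}^k \binom{n_j}{k+1}$. Because $x \mapsto \binom{x}{k+1}$ is discretely convex on $\mathbb{Z}_{\ge 0}$ (its successive differences $\binom{x}{k}$ are non-decreasing), this sum is minimized when the $n_j$ differ by at most one. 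That minimum equals $m_k(\tau_{k,n})$ via~\eqref{eq:mktau}, and the easy case is done.

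\textbf{Hard case, $d \ge k+1$.} Set $s = n - k^2$, so $s \le ck^{3/2}/\log k$. A single longest decreasing subsequence already supplies $\binom{d}{k+1}$ decreasing $(k+1)$-subsequences; since $\binom{k+3}{k+1} = \binom{k+3}{2}$ is of order $k^2$ and dominates $s$ in our regime, this alone suffices whenever $d \ge k+3$. The residual sub-cases are $d \in \{k+1, k+2\}$. For these I would combine three ingredients: (i) the Dilworth decomposition of $\sigma$ into $d$ increasing subsequences $B_1, \ldots, B_d$, giving at least $\sum_j \binom{|B_j|}{k+1}$ increasing $(k+1)$-subsequences; (ii) the $\binom{d}{k+1}$ decreasing $(k+1)$-subsequences coming from any one long decreasing subsequence; and (iii) an injection argument converting the ``excess'' of having $d > k$ chains---each of average size strictly less than $n/k$---into additional decreasing $(k+1)$-subsequences produced by the way the chains interleave. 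The hypothesis $s \le ck^{3/2}/\log k$ is exactly what allows these contributions to close the remaining gap up to $m_k(\tau_{k,n})$.

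The main technical obstacle will be the sub-case $d = k+1$: a single long decreasing subsequence contributes only $1$ extra monotone subsequence, whereas $m_k(\tau_{k,n})$ can be of order $s$, so the bulk of the count must come from elsewhere. Either one exhibits many \emph{distinct} decreasing subsequences of length $k+1$, or one shows that the imbalance forced by $d > k$ chains (of average size strictly below $n/k$) produces enough compensating increasing $(k+1)$-subsequences via a strengthened convexity argument. Pinning down this trade-off quantitatively is, I expect, exactly what dictates the threshold $ck^{3/2}/\log k$ in the abstract and forms the true technical heart of the proof.
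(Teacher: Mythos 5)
Your easy case ($d\le k$) is correct and in fact cleaner than the paper's treatment: once the longest decreasing subsequence has length at most $k$, Dilworth gives a partition into at most $k$ increasing runs, and convexity of $x\mapsto\binom{x}{k+1}$ immediately gives $\sum_j\binom{n_j}{k+1}\ge m_k(\tau_{k,n})$. Note also that in this case the length of a longest increasing run is at least $\lceil n/k\rceil$, so this overlaps with the paper's Section~6.1 (height at least $k+\ell+1$), which the paper handles instead by deleting one element of a longest chain and inducting on $n$ -- the induction is needed there to obtain the characterization of extremal permutations, but for the bare inequality your convexity argument suffices.

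The hard case ($d\ge k+1$) has a genuine and fatal gap. You write that ``$\binom{k+3}{2}$ is of order $k^2$ and dominates $s$ in our regime, this alone suffices whenever $d\ge k+3$,'' but you are comparing against the wrong quantity. The target is not $s=n-k^2$; it is
\[
m_k(\tau_{k,n})=r_{k,n}\binom{\lceil n/k\rceil}{k+1}+(k-r_{k,n})\binom{\lfloor n/k\rfloor}{k+1},
\]
which equals $s$ only when $n\le k^2+k$. Writing $\ell=\lceil n/k\rceil-k-1$, for $\ell\ge 1$ one has $m_k(\tau_{k,n})\ge k\binom{k+\ell}{k+1}$, which is already $\Theta(k^{\ell+1})$ and grows to roughly $2^{\Theta(\sqrt{k}/\log k)}$ at the top of the claimed range $n\le k^2+ck^{3/2}/\log k$; see the paper's estimate~\eqref{eq:mk-exp-sqrt-k}. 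A single decreasing run of length $d\le k+\ell$ contributes only $\binom{d}{k+1}\le\binom{k+\ell}{k+1}$ monotone $(k+1)$-subsequences, which is smaller than $m_k(\tau_{k,n})$ by a factor of about $k$ even in the balanced case $d=k+\ell$, and by more when $d$ is smaller. So nothing like ``$d\ge k+3$ is automatically finished'' is true. The same mismatch infects your remaining sub-cases $d\in\{k+1,k+2\}$: items (i)--(iii) are an informal sketch with no quantitative content, and the ``injection argument'' in (iii) is not described precisely enough to check.

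For comparison, what actually carries the paper through this regime is a completely different set of tools. One needs to exhibit \emph{exponentially many} long chains or long antichains, not a single one, and then transfer that count down to length $k+1$. The paper does this via (a) the Kruskal--Katona shadow bound (Lemma~\ref{lemma:KK}), (b) a ``signature'' lemma (Lemma~\ref{lemma:signatures} and Corollary~\ref{cor:signatures}) converting many maximum-length chains into many length-$(k+1)$ chains with only a small multiplicative loss, (c) a ``surplus'' argument (Lemma~\ref{lemma:large-surplus}, Corollary~\ref{cor:large-surplus}) showing that posets far from a union of $k$ chains have $2^{\Omega(\sqrt{k})}$ homogenous $(k+1)$-sets, and (d) a detailed analysis of the canonical antichain decomposition $A_1,\dots,A_h$ and the bipartite graphs $G_i$ between consecutive levels. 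Your proposal contains none of this machinery, and in particular nothing that produces more than polynomially many monotone subsequences, so it cannot reach the threshold $ck^{3/2}/\log k$.

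Finally, note that the statement you are asked to prove is Conjecture~\ref{conj:main} in full generality; the paper only proves it for $k$ large and $n\le k^2+ck^{3/2}/\log k$ (Theorem~\ref{thm:main}). Even if the hard case above were repaired, this would still only be a proof of the restricted range, not of the conjecture itself.
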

Very recently, Balogh et al.~\cite{BaHuLiPiUdVo} proved this conjecture for $k=3$ and sufficiently large $n$ and described all extremal permutations. Their proof uses computer assistance and is based on the framework of flag algebras.

\subsection{Our results}

In this paper, we provide first evidence supporting Conjecture~\ref{conj:main} for large $k$. Our main result is that the conjecture holds for all sufficiently large $k$, 
as long as $n$ is not much larger than $k^2$. This provides a good understanding of how the minimum number of monotone subsequences of length $k+1$ grows in a short interval above the extremal threshold. Our results are similar in spirit to the ones of~\cite{Er62, LoSi83}, which determine the minimum number of cliques in graphs whose number of edges is slightly supercritical (larger than the Tur{\'a}n number for the clique).

\begin{thm}
  \label{thm:main}
  There exist an integer $k_0$ and a positive real $c$ such that $m_k(n) = m_k(\tau_{k,n})$ for all $k$ and $n$ satisfying $k \ge k_0$ and $n \le k^2 + c k^{3/2}/\log k$. Moreover, if $n \neq k^2+k+1$ and $m_k(\sigma) = m_k(n)$ for some $\sigma \in S_n$, then $\sigma$ contains monotone subsequences of length $k+1$ of only one type (increasing or decreasing).
\end{thm}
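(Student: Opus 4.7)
The plan is to classify any permutation $\sigma \in S_n$ according to the lengths of its longest increasing and decreasing subsequences and to handle two cases separately. For each position $i$, let $h(i)$ and $w(i)$ denote the lengths of the longest increasing, respectively decreasing, subsequences of $\sigma$ ending at~$i$. A standard argument shows that the pairs $\bigl( h(i), w(i) \bigr)$ are pairwise distinct, so with $a = \max_i h(i)$ and $b = \max_i w(i)$ we have $n \le ab$, and in particular not both $a$ and $b$ can be at most $k$.

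In the unbalanced case, we may assume $b \le k$ after possibly reversing $\sigma$; then $\sigma$ contains no decreasing $(k+1)$-subsequence. The level sets $J_j = \{ i : w(i) = j \}$ for $j = 1, \ldots, b$ partition $[n]$ into $b \le k$ increasing subsequences, so
\[
m_k(\sigma) \ge \sum_{j=1}^{b} \binom{|J_j|}{k+1}
\]
just from $(k+1)$-subsequences contained in a single $J_j$. Since $x \mapsto \binom{x}{k+1}$ is super-additive on non-negative integers, this sum is minimized over partitions of $n$ into at most $k$ parts by the balanced $k$-partition, and the minimum equals $m_k(\tau_{k,n})$. In this case $\sigma$ has only one type of monotone $(k+1)$-subsequence, so the additional uniqueness claim of the theorem holds automatically.

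The balanced case $a, b \ge k+1$, in which $\sigma$ contains monotone $(k+1)$-subsequences of both types, forms the main content of the theorem: one must show $m_k(\sigma) > m_k(\tau_{k,n})$ whenever $n \le k^2 + c k^{3/2}/\log k$ and $n \ne k^2 + k + 1$. The basic tool is that every position $i$ with $h(i) \ge k+1$ ends at least $\binom{h(i)-1}{k}$ increasing $(k+1)$-subsequences, since any $k$ elements chosen from an increasing subsequence of length $h(i)$ ending at $i$ produce one such subsequence; the analogous inequality with $w(i)$ holds for decreasing subsequences. Summing these contributions and using the distinctness of the pairs $\bigl( h(i), w(i) \bigr)$ in $[a] \times [b]$ to control how many positions lie inside $[k] \times [k]$ yields the required lower bound on the total count of monotone $(k+1)$-subsequences.

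The main obstacle is making this lower bound sharp enough throughout the entire range of $n$. A crude count already gives at least $n - k^2$ monotone $(k+1)$-subsequences, which matches $m_k(\tau_{k,n})$ only up to $n = k^2 + k$; beyond that point, $m_k(\tau_{k,n})$ grows at rate of order $k(n - k^2)$, so one must gain an additional factor of order $k$. This is done via a trade-off: if $a$ is appreciably larger than $k$, then already a single increasing subsequence of length $a$ contributes $\binom{a}{k+1}$ increasing $(k+1)$-subsequences; if instead $a$ and $b$ are both close to $k+1$, then $n > k^2$ together with the distinctness of the pairs forces many positions with moderate $h$ or $w$, and the convex weights $\binom{h(i)-1}{k}$ and $\binom{w(i)-1}{k}$ aggregate favourably. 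Optimizing this trade-off and carefully approximating the boundary binomial coefficients, from which the extra $\log k$ factor emerges, is where the technical heart of the proof will lie; the isolated failure of uniqueness at $n = k^2 + k + 1$ is precisely the point at which a balanced-case construction first becomes competitive with $\tau_{k,n}$.
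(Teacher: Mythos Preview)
Your unbalanced case is fine (the relevant property of $x \mapsto \binom{x}{k+1}$ is convexity rather than super-additivity, but the conclusion is correct). The balanced case, however, is not a proof but a hope, and this is precisely where essentially all of the work in the paper lies. The contribution bound you propose---each $i$ with $h(i)>k$ ends at least $\binom{h(i)-1}{k}$ increasing $(k+1)$-subsequences---is too weak on its own: when $a=b=k+1$ there may be only a handful of positions with $h(i)=k+1$ or $w(i)=k+1$, each giving a contribution of $1$, and you have no mechanism forcing there to be many such positions. Your ``trade-off'' paragraph does not supply one; it merely names two extreme regimes without showing how to interpolate, and the intermediate regime (say $a,b=k+O(1)$, $n$ around $k^2+k$) is exactly where the examples $\sigma_k^1,\sigma_k^2$ live and where the inequality is genuinely delicate.

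For comparison, the paper does \emph{not} attack the balanced case by summing endpoint contributions. It runs an induction on $n$: if some element lies in more than $\binom{k+\ell}{k}$ homogeneous $(k+1)$-sets, delete it and recurse. Otherwise no element is ``rich'', which forces $\height(P)\le k+\ell$. A separate lemma shows that if the $k$-surplus $n-k\cdot\height(P)$ is large (of order $k$), then $P$ has $2^{\Omega(\sqrt{k})}$ homogeneous sets, far more than needed; this disposes of $\height(P)<k+\ell$. The remaining case $\height(P)=k+\ell$ is then analysed via the bipartite graphs $G_i$ between consecutive levels $A_i,A_{i+1}$ of the canonical antichain decomposition: at each level with $|A_i|>k$ one shows that either many $(k+1)$-antichains appear locally, or the number of maximum chains jumps by roughly $k$ when passing through that level. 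Iterating this over all ``fat'' levels and converting long-chain counts into $(k+1)$-chain counts (via a Kruskal--Katona-type signature argument, whence the $\log k$) gives the bound, followed by several pages of case analysis to pin down the near-extremal structure and isolate the exception at $n=k^2+k+1$. None of these mechanisms---induction with element deletion, the surplus lemma, the level-by-level chain-count amplification---is present in your sketch, and I do not see how to carry out the balanced case without something of comparable strength.
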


\begin{figure}[h]
  \centering
  \begin{tabular}{ccccc}
    \begin{tikzpicture}
      \def\s{0.35}
      \def\sigma{
        {9, 10, 11, 12, 13, 5, 6, 7, 8, 1, 2, 3, 4}
      }
      \draw[step=\s,black,very thin] (0,0) grid (12*\s,12*\s);
      \foreach \x in {0, ..., 12}
      {
        \filldraw (\x*\s, \sigma[\x]*\s-\s) circle[radius=2pt];
      }
    \end{tikzpicture}

    &

    \quad

    &

    \begin{tikzpicture}
      \def\s{0.35}
      \def\sigma{
        {10, 6, 11, 12, 13, 2, 7, 8, 9, 1, 3, 4, 5}
      }
      \draw[step=\s,black,very thin] (0,0) grid (12*\s,12*\s);
      \foreach \x in {0, ..., 12}
      {
        \filldraw (\x*\s, \sigma[\x]*\s-\s) circle[radius=2pt];
      }
    \end{tikzpicture}

    &

    \quad

    &

    \begin{tikzpicture}
      \def\s{0.35}
      \def\sigma{
        {10, 6, 11, 12, 13, 3, 7, 8, 9, 1, 2, 4, 5}
      }
      \draw[step=\s,black,very thin] (0,0) grid (12*\s,12*\s);
      \foreach \x in {0, ..., 12}
      {
        \filldraw (\x*\s, \sigma[\x]*\s-\s) circle[radius=2pt];
      }
    \end{tikzpicture}
  \end{tabular}
  \caption{Permutations $\tau_{3,13}$, $\sigma_3^1$, and $\sigma_3^2$.}
  \label{fig:sigma-ik}
\end{figure}
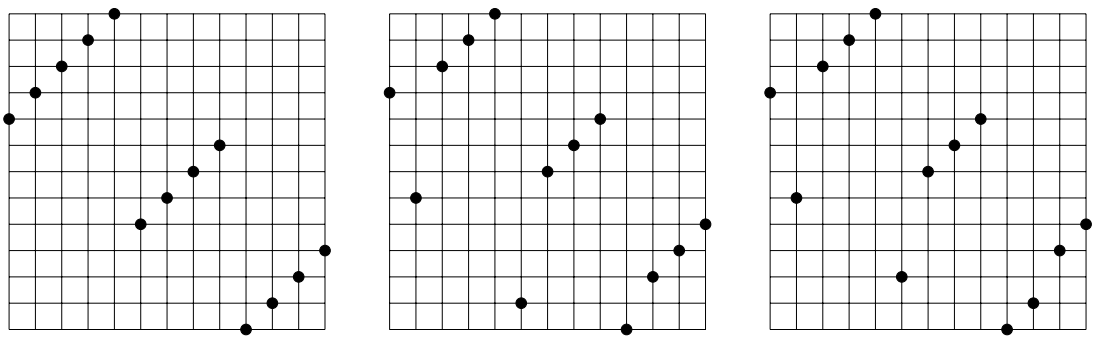

Somewhat surprisingly, if $n = k^2+k+1$, then there are $\sigma \in S_n$ with $m_k(\sigma) = m_k(n) = 2k+1$ which contain both increasing and decreasing subsequences of length $k+1$. Two such permutations are $\sigma_k^1$ and $\sigma_k^2$, where $\sigma_k^i$ is described by
\[
\begin{array}{llllll}
k^2+1, & k^2-k, & k^2+2, & k^2+3, & \ldots, & k^2+k+1, \\
& k^2-2k-1, & k^2-k+1, & k^2-k+2, & \ldots, & k^2,\\
& \vdots & \vdots & \vdots & & \vdots \\
& k+3, & 2k+5, & 2k+6, & \ldots, & 3k+4, \\
& 1+i, & k+4, & k+5, & \ldots, & 2k+3,\\
& 1, & 4-i, & 4, & \ldots, & k+2.
\end{array}
\]
One can check that $\sigma_k^i$ contains $2k+1-i$ increasing subsequences of length $k+1$ and $i$ decreasing subsequences of length $k+1$, see Figure~\ref{fig:sigma-ik}. However, no permutation $\sigma$ with $m_k(\sigma) = m_k(n) = 2k+1$ can have more monotone subsequences of length $k+1$ of the `odd' type than $\sigma_k^2$. It will follow from our proof of Theorem~\ref{thm:main} that for each extremal permutation, at least $2k-1$ out of its $2k+1$ monotone subsequences of length $k+1$ are of the same type. For details, we refer the reader to Theorem~\ref{thm:posets} and Example~\ref{example:extremal-posets}.

\subsection{Chains and antichains in posets}

Every permutation admits a natural representation as a poset (partially ordered set) in which its increasing and decreasing subsequences are mapped to chains and antichains, respectively. Indeed, given a permutation $\sigma$ of $[n]$, one may define a binary relation $\les$ on $[n]$ by letting $i \les j$ if and only if $i \le j$ and $\sigma(i) \le \sigma(j)$. It is not hard to see that $\Ps = ([n], \les)$ is a poset whose chains and antichains are in a one-to-one length-preserving\footnote{By `length' of a chain or an antichain we mean the number of its elements.} correspondence with increasing and decreasing subsequences in $\sigma$, respectively. Via this correspondence, one may easily deduce the theorem of Erd{\H{o}}s and Szekeres from the famous theorem of Dilworth~\cite{Di50}, which says that every poset containing no antichain with $k+1$ elements admits a partition into $k$ chains, or its much easier to prove dual version (due to Mirsky~\cite{Mi71}), which says that every poset containing no chain with $k+1$ elements can be partitioned into $k$ antichains, see Section~\ref{sec:decomposition}.

Let us call a set $A$ of elements of a poset \emph{homogenous} if $A$ is a chain or an antichain. A natural generalization of Problem~\ref{prob:main} to posets would be the following.

\begin{prob}
  \label{prob:posets}
  For every $k$ and $n$, determine the minimum number of homogenous $(k+1)$-element sets in a poset with $n$ elements.
\end{prob}

Given a poset $P$, we let $h_k(P)$ denote the number of homogenous sets of cardinality $k+1$ in $P$ and
\[
h_k(n) = \min\{h_k(P) \colon \text{$P$ is a poset with $n$ elements}\}.
\]
It follows from the above discussion that $h_k(n) \le m_k(n)$ and we think that it is natural to ask the following.

\begin{quest}
  Is it true that $h_k(n) = m_k(n)$ for all $n$ and $k$?
\end{quest}

Clearly, not every poset is isomorphic to $\Ps$ for some permutation $\sigma$. In fact, this is the case precisely for posets of \emph{order dimension} at most two, that is, posets that are the intersection of two linear orders. Nevertheless, more for the sake of convenience rather than generality, we shall present our arguments using the language of posets. We remark here that several times in the proof, we will use the fact that we can `flip' our poset $P$, exchanging the roles of chains and antichains. That is, we will assume that there is a dual poset $P^*$ defined on the same set as $P$ such that every pair of elements is comparable in either $P$ or $P^*$ but not both of them. This is possible only for posets of order dimension at most two, that is, ones that represent permutations. Indeed, for every permutation $\sigma \in S_n$, we have $P_\sigma^* = P_{\sigma^*}$, where $\sigma^*(i) = n+1-\sigma(i)$. The converse statement was proved by Dushnik and Miller~\cite{DuMi41}. Let us now rephrase Theorem~\ref{thm:main} in the language of posets.

\begin{thm}
  \label{thm:posets}
  There exist an integer $k_0$ and a positive real $c$ such that the following is true. Let $k$ and $n$ be integers satisfying $k \ge k_0$ and $n \le k^2 + c k^{3/2} / \log k$. If $P$ is an $n$-element poset of order dimension at most two, then
  \begin{equation}
    \label{eq:hkP-lower}
    h_k(P) \ge m_k(\tau_{k,n}).
  \end{equation}
  Moreover, if equality holds in~\eqref{eq:hkP-lower}, then $P$ can be decomposed into $k$ chains or $k$ antichains of length $\lfloor n/k \rfloor$ or $\lceil n/k \rceil$ each, unless $n = k^2 + k + 1$ and $P$ (or $P^*$) is one of the posets described in Example~\ref{example:extremal-posets} below.
\end{thm}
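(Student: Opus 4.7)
The plan is to split the argument according to two parameters of $P$: the length $\height(P)$ of a longest chain (the ``height'') and the length $\width(P)$ of a longest antichain (the ``width''). Since $n > k^2$ and Dilworth's theorem gives $n \le \height(P)\cdot\width(P)$, at least one of these exceeds $k$. I therefore consider three cases: (A) $\width(P) \le k$; (B) $\height(P) \le k$; and (C) both $\height(P), \width(P) \ge k+1$.

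Cases (A) and (B) are the easy ones. In Case (A), Dilworth's theorem partitions $P$ into $k$ chains $C_1,\ldots,C_k$, some possibly empty. Every $(k+1)$-subset of a $C_i$ is a chain, so
\[
h_k(P) \;\ge\; \sum_{i=1}^{k}\binom{|C_i|}{k+1}.
\]
Since $\binom{\cdot}{k+1}$ is convex and $\sum_i|C_i|=n$, the right-hand side is minimized precisely when the $|C_i|$ differ pairwise by at most $1$, and the minimum then equals $m_k(\tau_{k,n})$ by \eqref{eq:mktau}. Equality in the bound therefore forces the balanced chain decomposition claimed in the theorem. Case (B) is handled by applying Case (A) to the dual poset $P^*$, which exists because $P$ has order dimension at most two and interchanges chains with antichains.

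Case (C) is the heart of the proof, and the hypothesis $n \le k^2 + ck^{3/2}/\log k$ is used here. Write $\height(P) = k+1+a$ and $\width(P) = k+1+b$ with $a,b \ge 0$; the Dilworth inequality $n \le \height(P)\width(P)$ forces $(a+b)(k+1)+ab \ge n - (k+1)^2$, so both $a$ and $b$ are at most $O(\sqrt{k}/\log k)$. Since for $k\ge 1$ no $(k+1)$-set is simultaneously a chain and an antichain, applying Dilworth and Mirsky separately to $P$ gives \emph{independent} contributions:
\[
h_k(P) \;\ge\; \sum_{i=1}^{\width(P)}\binom{|D_i|}{k+1} \;+\; \sum_{j=1}^{\height(P)}\binom{|E_j|}{k+1},
\]
where $\{D_i\}$ is a Dilworth chain partition and $\{E_j\}$ a Mirsky antichain partition. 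Each sum contains a cell of guaranteed size $\height(P)$, respectively $\width(P)$. Expanding $\binom{k+1+t}{k+1}$ and comparing with $m_k(\tau_{k,n})$ (which is of order $(n-k^2)\cdot(k+1)$ once $n$ exceeds $k^2+k$) should give a strict inequality whenever $a+b \ge 1$, handling all but a narrow window near $n=k^2+k+1$.

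The main obstacle is the borderline subcase of Case (C) in which $\height(P)=\width(P)=k+1$ and $n$ is close to $k^2+k+1$: the ``crude'' contribution $\binom{k+1}{k+1}+\binom{k+1}{k+1}=2$ from a single longest chain $C_0$ and longest antichain $A_0$ falls far short of the target $2k+1$. To close the gap I would prove a structural lemma showing that, in a two-dimensional poset, a longest chain $C_0$ and a longest antichain $A_0$ (which meet in at most one point, so $|C_0\cup A_0|\ge 2k+1$) force nearby \emph{families} of near-maximum chains and antichains, using the flip $P^*$ to propagate structural information between the chain side and the antichain side. Pinning the equality case down to exactly $\sigma_k^1$ and $\sigma_k^2$ (and their duals) will require a careful case analysis of how the remaining $n-|C_0\cup A_0|$ elements attach to $C_0\cup A_0$, and this is where I expect the bulk of the technical work to lie.
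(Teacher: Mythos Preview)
Your Cases (A) and (B) are correct and clean: Dilworth (respectively Mirsky) plus convexity of $\binom{\cdot}{k+1}$ gives the bound, and equality in the convexity step forces the balanced decomposition. This is actually more direct than the paper, which carries an induction on $n$ throughout and never isolates the case $\width(P)\le k$.

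Case (C), however, is where almost all of the content lies, and here the proposal has real gaps. First, the inference ``$n\le\height(P)\width(P)$ gives $(a+b)(k+1)+ab\ge n-(k+1)^2$, so both $a$ and $b$ are at most $O(\sqrt{k}/\log k)$'' is simply wrong: the displayed inequality is a \emph{lower} bound on $(a+b)(k+1)+ab$ and yields no upper bound on $a$ or $b$ individually. In Case~(C) one can easily have, say, $\height(P)$ of order $2k$ while $\width(P)=k+1$. (There \emph{is} an a~priori upper bound on $a$ and $b$, but it comes from observing that a single chain of length $k+1+a$ already contributes $\binom{k+1+a}{a}$ homogeneous sets --- not from Dilworth.) Second, and more importantly, the combined Dilworth--Mirsky count $\sum_i\binom{|D_i|}{k+1}+\sum_j\binom{|E_j|}{k+1}$ is, as you yourself concede, too weak by a factor of order $k$ in the critical range, and the ``structural lemma showing \ldots\ families of near-maximum chains and antichains'' that would close the gap is left entirely unspecified. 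Since Case~(C) is the generic situation once $n>k^2+k$, this means the proposal has essentially no argument for the heart of the theorem.

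The paper's route through this case looks nothing like what you sketch. It works with the canonical Mirsky decomposition $P=A_1\cup\cdots\cup A_h$, the bipartite Hasse graphs $G_i$ between consecutive layers, and the quantities $\Sigma_i$ counting maximum-length chains reaching level $i$. A ``large surplus'' lemma first eliminates posets far from a union of $k$ chains; then a sequence of increasingly sharp lower bounds on $\Sigma_1$ --- driven by the structure of $G_i$ at the few levels with $|A_i|>k$ --- either forces $h_k(P)>m_k(\tau_{k,n})$ outright or pins $P$ down to one of two narrow ``almost extremal'' shapes, which are finished off by a direct count of both chains and antichains. The duality $P\leftrightarrow P^*$ is used only to normalise $\height(P)\ge\width(P)$ up front, not in the interactive way your last paragraph envisions.
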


\begin{example}
  \label{example:extremal-posets}
  Suppose that $n = k^2+k+1$ and observe that $m_k(\tau_{k,n}) = 2k+1$. We describe two families of $n$-element posets with exactly $2k+1$ homogenous $(k+1)$-sets that contain both chains and antichains with $k+1$ elements. Each of these posets has precisely $k+1$ minimal elements; denote the set of these minimal elements by $A_1$. Moreover, $P \setminus A_1$ can be decomposed into $k$ chains as well as into $k$ antichains. In particular, each chain and each antichain in every such decomposition has precisely $k$ elements. Furthermore, $P \setminus A_1$ contains only $k$ chains of length $k$. Let $A_2$ be the set of minimal elements of $P \setminus A_1$ and note that $|A_2| = k$. The comparability graph of the subposet of $P$ induced by $A_1 \cup A_2$ is either (i) a path with $2k+1$ vertices or (ii) the disjoint union of a path with $2k-1$ vertices and an edge. Moreover, if (ii) holds, then one of the elements of $A_1$ belonging to the path of length $2k-1$ is smaller than the second smallest element of the unique $k$-element chain in $P \setminus A_1$ whose smallest element is the unique element of $A_2$ that does not belong to the path. One can check that if (i) holds, then $P$ has precisely $2k$ chains and one antichain with $k+1$ elements and that if (ii) holds, then $P$ has precisely $2k-1$ chains and $2$ antichains with $k+1$ elements. Finally, in both cases, there exist posets of order dimension at most two fitting the description. Two examples of such posets are $P_{\sigma_k^1}$ and $P_{\sigma_k^2}$, where $\sigma_k^1$ and $\sigma_k^2$ are the permutations defined below Theorem~\ref{thm:main}; see Figure~\ref{fig:posets-sigma-ik}. 
\end{example}

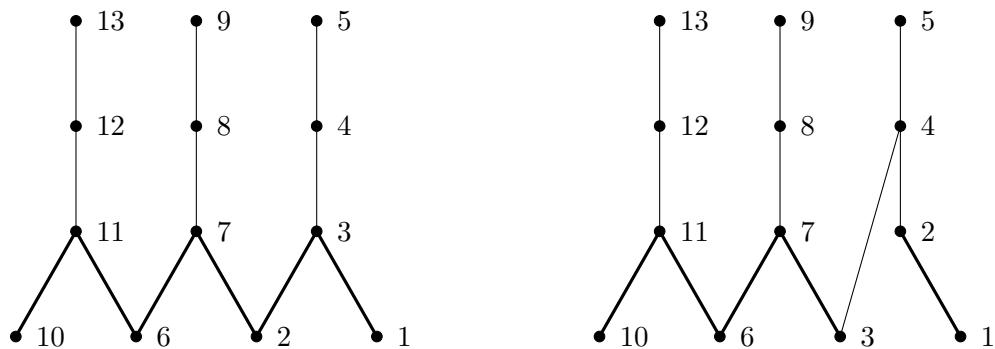
\begin{figure}[h]
  \centering
  
  \begin{tabular}{ccc}
    \begin{tikzpicture}
      \def\s{0.8}
      \def\t{1.4}
      \def\p{
        {10,6,2,1,11,7,3,12,8,4,13,9,5}
      }

      \foreach \x [evaluate=\x as \lab using ({\p[\x]})] in {0, ..., 3}
      {
        \node [label=right:$\lab$] at (2*\x*\s, 0) {};
        \filldraw (2*\x*\s, 0) circle[radius=2pt];
      }
      
      \foreach \x in {0, ..., 2}
      {
        \draw [very thick] (2*\x*\s, 0) -- (2*\x*\s + \s, \t) -- (2*\x*\s + 2*\s, 0);
        \draw (2*\x*\s + \s, \t) -- (2*\x*\s + \s, 3*\t);
      }
      
      \foreach \y in {1, ..., 3}
      {
        \foreach \x [evaluate=\x as \lab using ({\p[3*\y+\x+1]})] in {0, ..., 2}
        {
          \node [label=right:$\lab$] at (2*\x*\s+\s, \y*\t) {};
          \filldraw (2*\x*\s+\s, \y*\t) circle[radius=2pt];
        }
      }
    \end{tikzpicture}

    &

    \qquad\qquad\qquad

    &

    \begin{tikzpicture}
      \def\s{0.8}
      \def\t{1.4}
      \def\p{
        {10,6,3,1,11,7,2,12,8,4,13,9,5}
      }

      \foreach \x [evaluate=\x as \lab using ({\p[\x]})] in {0, ..., 3}
      {
        \node [label=right:$\lab$] at (2*\x*\s, 0) {};
        \filldraw (2*\x*\s, 0) circle[radius=2pt];
      }
      
      \foreach \x in {0, ..., 1}
      {
        \draw [very thick] (2*\x*\s, 0) -- (2*\x*\s + \s, \t) -- (2*\x*\s + 2*\s, 0);
      }
      
      \draw (4*\s, 0) -- (5*\s, 2*\t);
      \draw [very thick] (5*\s, \t) -- (6*\s, 0);
      
      \foreach \x in {0, ..., 2}
      {
        \draw (2*\x*\s + \s, \t) -- (2*\x*\s + \s, 3*\t);
      }
      
      \foreach \y in {1, ..., 3}
      {
        \foreach \x [evaluate=\x as \lab using ({\p[3*\y+\x+1]})] in {0, ..., 2}
        {
          \node [label=right:$\lab$] at (2*\x*\s+\s, \y*\t) {};
          \filldraw (2*\x*\s+\s, \y*\t) circle[radius=2pt];
        }
      }
    \end{tikzpicture}
  \end{tabular}

  \caption{Hasse diagrams of posets $P_{\sigma_3^1}$ and $P_{\sigma_3^2}$. The edges of the comparability graph induced by $A_1 \cup A_2$ are thickened.}
  \label{fig:posets-sigma-ik}
\end{figure}

\subsection{Outline of the paper}
The remainder of the paper is organized as follows. In Section~\ref{sec:decomposition}, we describe a canonical decomposition of an arbitrary poset into antichains and introduce several pieces of notation used in the proof of Theorem~\ref{thm:posets} and in Section~\ref{sec:auxiliary-lemmas}, we collect several auxiliary lemmas. In Section~\ref{sec:outline-proof}, we present a brief outline of the proof of Theorem~\ref{thm:posets}. Section~\ref{sec:large-surplus} is devoted to the proof of one of our main lemmas, which provides a lower bound on the number of homogenous sets in posets with large `surplus' (this notion will be defined in Section~\ref{sec:outline-proof}). Finally, Section~\ref{sec:proof} contains the proof of Theorem~\ref{thm:posets}. We close the paper with several concluding remarks.

\section{Decomposition into antichains}

\label{sec:decomposition}

In our arguments, we shall rely on the following canonical decomposition of an arbitrary poset into antichains, cf.\ Mirsky's theorem~\cite{Mi71}. Fix an arbitrary poset $P$. Recall that the height and the width of $P$, which we shall denote by $\height(P)$ and $\width(P)$, are the cardinalities of the largest chain and the largest antichain in $P$, respectively. For each positive integer $i$, let
\[
A_i = \{x \in P \colon \text{the longest chain $L$ with $\max L = x$ has $i$ elements}\}.
\]
In other words, $A_1$ is the set of minimal elements of $P$ and for every $i \ge 1$, $A_{i+1}$ is the set of minimal elements in $P \setminus (A_1 \cup \ldots \cup A_i)$. For each $i$, the set $A_i$ is an antichain and thus $|A_i| \le \width(P)$, and
\[
P = \bigcup_{i=1}^{\height(P)} A_i.
\]

For each $i$ with $1 \le i < \height(P)$, let $G_i$ be the bipartite graph on the vertex set $A_i \cup A_{i+1}$ whose edges are all pairs $xy$ with $x \in A_i$ and $y \in A_{i+1}$ such that $x \le y$. In other words, $G_i$ is the Hasse diagram of the subposet of $P$ induced by $A_i \cup A_{i+1}$. Observe that each vertex in $A_{i+1}$ has at least one $G_i$-neighbor in $A_i$ (as otherwise it would belong to $A_1 \cup \ldots \cup A_i$). On the other hand, it is possible that some vertices in $A_i$ have degree zero in $G_i$ (as clearly not all elements of $P$ have to belong to some chain of maximum length).

Let $h = \height(P)$. For every $i \in [h]$ and $x \in A_i$, we let $u_i(x)$ be the number of chains $L \subseteq P$ of length $h - i + 1$ with $\min L = x$. Observe that $u_h(x) = 1$ for every $x \in A_h$ and that for all $i \in [h-1]$ and $x \in A_i$,
\[
u_i(x) = |\{(x_{i+1}, \ldots, x_h) \in A_{i+1} \times \ldots \times A_h \colon x \le x_{i+1} \le \ldots \le x_h\}|.
\]
Upon making this definition, one easily verifies that the number of chains of length $h$ in $P$ is $\sum_{x \in A_1} u_1(x)$ and that for each $i \in [h-1]$ and $x \in A_i$,
\begin{equation}
  \label{eq:ui-uip}
  u_i(x) = \sum_{xy \in G_i} u_{i+1}(y).
\end{equation}
Since no element $x \in A_i$ with $u_i(x) = 0$ will be contributing anything to the total count of chains of length $h$, we shall often be focusing our attention on the set $A_i' \subseteq A_i$ defined by
\[
A_i' = \{x \in A_i \colon u_i(x) \ge 1\}.
\]
Let us note here for future reference that~\eqref{eq:ui-uip} implies that there are no edges of $G_i$ between $A_{i+1}'$ and $A_i \setminus A_i'$. As we shall often estimate the sum of $u_i(x)$ over all $x \in A_i$, let us abbreviate it by $\Sigma_i$. That is, for each $i \in [h]$, let
\[
\Sigma_i = \sum_{x \in A_i} u_i(x).
\]
Since each $y \in A_{i+1}$ has at least one $G_i$-neighbor in $A_i$, it follows from~\eqref{eq:ui-uip} that
\begin{equation}
  \label{eq:sum-ui-monotone}
  \Sigma_{i} \ge \Sigma_{i+1}.
\end{equation}
Clearly, \eqref{eq:sum-ui-monotone} holds with equality if and only if each $y \in A_{i+1}'$ has exactly one $G_i$-neighbor in $A_i$. This naturally leads to the final definition of this section. Namely, for each $i \in [h-1]$, we let
\[
B_{i+1} = \{y \in A_{i+1}' \colon \deg_{G_i}(y) = 1\}.
\]

\section{Auxiliary lemmas}

\label{sec:auxiliary-lemmas}

In this section, we collect a few auxiliary lemmas that will be repeatedly used in the proof of Theorem~\ref{thm:posets}. Our first lemma is a straightforward corollary of the Kruskal--Katona theorem~\cite{Ka68, Kr63}.

\begin{lemma}
  \label{lemma:KK}
  Suppose that $a \ge b > 0$, let $\FF$ be an arbitrary family of $a$-element sets, and define
  \[
  \partial_b \FF = \{B \colon \text{$|B| = b$ and $B \subseteq A$ for some $A \in \FF$}\}.
  \]
  Then
  \[
  |\partial_b \FF| \ge \min \{ |\FF|/2, 2^b \}.
  \]
\end{lemma}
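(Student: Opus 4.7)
The plan is to invoke the Lov\'asz continuous form of the Kruskal--Katona theorem: writing $|\FF| = \binom{x}{a}$ for some real $x \ge a$, where $\binom{x}{c} := x(x-1)\cdots(x-c+1)/c!$, one has $|\partial_b \FF| \ge \binom{x}{b}$. (The case $|\FF|=0$ is trivial, so such an $x$ exists.) The argument then splits according to the size of $x$ relative to $2b$.

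If $x \ge 2b$, then since $y \mapsto \binom{y}{b}$ is increasing for $y \ge b$,
\[
|\partial_b \FF| \ge \binom{x}{b} \ge \binom{2b}{b} = \prod_{i=1}^{b}\frac{b+i}{i} \ge 2^b,
\]
which already gives the $2^b$ side of the desired minimum. If instead $a \le x < 2b$, I would compare $\binom{x}{a}$ with $\binom{x}{b}$ factor-by-factor:
\[
\frac{\binom{x}{a}}{\binom{x}{b}} = \prod_{j=0}^{a-b-1}\frac{x-b-j}{a-j}.
\]
Each numerator factor is strictly less than $b$ (since $x<2b$), while each denominator factor is strictly greater than $b$, so the ratio is at most $1$. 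Hence $|\partial_b\FF| \ge \binom{x}{b} \ge \binom{x}{a} = |\FF| \ge |\FF|/2$, which is the other side of the minimum.

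There is no real obstacle here: once the case split at the threshold $x = 2b$ is made, both halves follow immediately from Kruskal--Katona plus the elementary inequality $\binom{2b}{b} \ge 2^b$. The factor $1/2$ in the statement is genuine slack; in fact the stronger bound $|\partial_b\FF| \ge \min\{|\FF|,2^b\}$ drops out of this argument, which is presumably why the authors state only the weaker form that suffices for their later applications.
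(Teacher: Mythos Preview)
Your proof is correct and follows essentially the same strategy as the paper: invoke Kruskal--Katona and split according to whether the size parameter is above or below $2b$. The only difference is that you use the Lov\'asz continuous form with a real $x$ satisfying $|\FF|=\binom{x}{a}$, whereas the paper rounds to an integer $m$ with $\binom{m-1}{a}\le|\FF|<\binom{m}{a}$; this rounding is what forces the paper to settle for $\frac{b+1}{m}|\FF|\ge|\FF|/2$ in the second case, while your factor-by-factor comparison gives $|\FF|$ directly and hence the sharper bound $|\partial_b\FF|\ge\min\{|\FF|,2^b\}$ you observe at the end.
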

\begin{proof}
  If $\FF = \emptyset$, $a=b$, or $a \ge 2b$, then the assertion of the lemma is trivial as
  \begin{equation}
    \label{eq:a-choose-b}
    \binom{2b}{b} \ge 2^b.
  \end{equation}
  Otherwise, let $m$ be the smallest integer such that $\binom{m}{a} > |\FF|$. By the Kruskal--Katona theorem~\cite{Ka68,Kr63}, $|\partial_b \FF| \ge \binom{m-1}{b}$. If $m-1 \ge 2b$, then the conclusion follows from~\eqref{eq:a-choose-b}. Otherwise, since $b < a \le m \le 2b$, then $\binom{m}{a} \le \binom{m}{b+1}$ and consequently,
  \[
  |\partial_b \FF| \ge \binom{m-1}{b} > \binom{m-1}{b} \frac{|\FF|}{\binom{m}{a}} \ge \frac{\binom{m-1}{b}}{\binom{m}{b+1}} |\FF| = \frac{b+1}{m} |\FF| \ge \frac{|\FF|}{2}.\qedhere
  \]
\end{proof}

Our second lemma will be essential in proving a lower bound on the number of chains of length $k+1$ in a poset of height larger than $k+1$ in terms of the number of chains of maximum length. The lemma is somewhat abstract, but it is immediately followed by a much more concrete corollary.

\begin{lemma}
  \label{lemma:signatures}
  Suppose that $M$ is a positive integer, $X$ and $Y$ are arbitrary sets, and $f_1, \ldots, f_M \colon X \to Y$ are pairwise different functions. There exist sets $X_1, \ldots, X_M \subseteq X$ with $|X_i| \le \log_2 M$ for all $i \in [M]$ such that
  \begin{equation}
    \label{eq:fifj}
    f_i|_{X_i \cup X_j} \neq f_j|_{X_i \cup X_j} \quad \text{for all $i \neq j$}.
  \end{equation}
\end{lemma}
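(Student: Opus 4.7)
The plan is to argue by induction on $M$. The base case $M=1$ is immediate with $X_1 = \emptyset$. For the inductive step with $M \ge 2$, since the $f_i$ are pairwise distinct there exists $x^* \in X$ at which the values $f_1(x^*), \ldots, f_M(x^*)$ are not all equal. I then pick a value $y^* \in Y$ attained at $x^*$ by a \emph{minority} of the functions, so that $\mathcal{S}_0 := \{i \in [M] : f_i(x^*) = y^*\}$ satisfies $1 \le |\mathcal{S}_0| \le M/2$; such a $y^*$ exists because at least two distinct values appear among the $f_i(x^*)$, and the least frequent one has multiplicity at most $M/2$. Setting $\mathcal{S}_1 := [M] \setminus \mathcal{S}_0$ gives $|\mathcal{S}_1| \le M - 1$, so the inductive hypothesis applies separately to the two (still pairwise distinct) restricted families $\{f_i : i \in \mathcal{S}_0\}$ and $\{f_i : i \in \mathcal{S}_1\}$, yielding preliminary signature sets $X_i^0$ for $i \in \mathcal{S}_0$ and $X_i^1$ for $i \in \mathcal{S}_1$.

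The heart of the argument is the asymmetric definition
\[
X_i := \begin{cases} X_i^0 \cup \{x^*\} & \text{if } i \in \mathcal{S}_0, \\ X_i^1 & \text{if } i \in \mathcal{S}_1; \end{cases}
\]
the point $x^*$ is appended only to the signatures on the \emph{minority} side. I will then verify~\eqref{eq:fifj} in three easy cases for a pair $i \neq j$. If both lie in $\mathcal{S}_0$ (resp.\ both in $\mathcal{S}_1$), the inductive hypothesis applied to that side provides some point in $X_i^0 \cup X_j^0 \subseteq X_i \cup X_j$ (resp.\ $X_i^1 \cup X_j^1 \subseteq X_i \cup X_j$) at which $f_i$ and $f_j$ differ. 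If $i \in \mathcal{S}_0$ and $j \in \mathcal{S}_1$, then $x^* \in X_i \subseteq X_i \cup X_j$ and $f_i(x^*) = y^* \neq f_j(x^*)$ by the definition of $\mathcal{S}_0$. The size bound is then immediate: for $i \in \mathcal{S}_0$, $|X_i| \le |X_i^0| + 1 \le \log_2 |\mathcal{S}_0| + 1 \le \log_2(M/2) + 1 = \log_2 M$, and for $i \in \mathcal{S}_1$, $|X_i| = |X_i^1| \le \log_2 |\mathcal{S}_1| \le \log_2(M-1) \le \log_2 M$.

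The main obstacle this asymmetric construction is designed to circumvent is that the majority side $\mathcal{S}_1$ can be as large as $M-1$; a naive symmetric recursion appending $x^*$ to \emph{every} signature would remove only one element from the larger side per level and therefore yield a linear rather than logarithmic bound. The crucial observation enabling the asymmetric fix is that, in order to distinguish two functions lying on opposite sides of the split, $x^*$ need only appear in \emph{one} of their two signatures for it to lie in the union; storing $x^*$ on the minority side alone thus buys a full halving step for the minority signatures while costing the majority signatures nothing.
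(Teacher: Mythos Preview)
Your proof is correct and follows essentially the same approach as the paper's. The only cosmetic difference is that the paper partitions $[M]$ into \emph{all} fibers $I(x,z)=\{i:f_i(x)=z\}$ and appends $x$ to every signature outside the single largest fiber, whereas you peel off one minority fiber $\mathcal{S}_0$ and lump the rest into $\mathcal{S}_1$; the key asymmetric trick---recording the splitting point only on the small side so that the large side recurses without paying for it---is identical.
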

\begin{proof}
  We prove the lemma by induction on $M$. The statement is trivial if $M=1$ (one takes $I_1 = \emptyset$), so we may assume that $M \ge 2$. Since $f_1, \ldots, f_M$ are pairwise different, there is an $x \in X$ such that not all $f_i$ take the same value at $x$. For each $y \in Y$, let
  \[
  I(x, y) = \{i \in [M] \colon f_i(x) = y\}
  \]
 and let $y \in Y$ be a value that maximizes $|I(x, y)|$. Note that $|I(x, y)| < M$ by our choice of $x$ and that $|I(x, z)| \le M/2$ for each $z \in Y \setminus \{y\}$. We apply the inductive assumption separately to $\{f_i \colon i \in I(x, z)\}$ for each $z \in Y$ with $I(x,z) \neq \emptyset$ to obtain sets $X_1', \ldots, X_M' \subseteq X$ such that $|X_i'| < \log_2M$ for every~$i$, $|X_i'| \le \log_2 (M/2)$ for every $i \notin I(x,y)$, and~\eqref{eq:fifj} holds for each pair $\{i,j\}$ which is fully contained in one of the sets $I(x,z)$. It is straightforward to check that the sets $X_1, \ldots, X_M$ defined by
 \[
 X_i =
 \begin{cases}
   X_i' & \text{if $i \in I(x,y)$,} \\
   X_i' \cup \{x\} & \text{if $i \notin I(x, y)$,}
 \end{cases}
 \]
 satisfy the assertion of the lemma.
\end{proof}

\begin{cor}
  \label{cor:signatures}
  Let $k$, $\ell$, and $M$ be positive integers, let $P$ be a poset of height $k+\ell$, and suppose that $m := \log_2 M + 1 \le k/4$.
  \begin{enumerate}[(i)]
  \item
    \label{item:signatures-global}
    If $P$ contains at least $M$ chains of length $k+\ell$, then it contains at least
    \[
    \exp \left( - \frac{2(\ell-1)m}{k} \right) \cdot M \binom{k+\ell}{k+1}
    \]
    chains of length $k+1$.
  \item
    \label{item:signatures-local}
    Given any $y \in P$, (\ref{item:signatures-global}) still holds if we replace `chains' with `chains containing $y$'.
  \end{enumerate}
\end{cor}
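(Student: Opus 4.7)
The plan is to apply Lemma~\ref{lemma:signatures} to the increasing enumerations $f_i \colon [k+\ell] \to P$ of the $M$ chains $C_1, \ldots, C_M$ of length $k+\ell$ in order to obtain signature sets $X_i \subseteq [k+\ell]$ of size at most $\log_2 M \le m-1$. For each $i$, consider the family
\[
\mathcal{D}_i = \{D \subseteq C_i \colon |D| = k+1 \text{ and } f_i(X_i) \subseteq D\}
\]
of $(k+1)$-subchains of $C_i$ that contain the signature elements; its cardinality is $\binom{k+\ell-|X_i|}{k+1-|X_i|}$. Provided the $\mathcal{D}_i$ are pairwise disjoint, the number of distinct $(k+1)$-chains in $P$ is at least $\sum_{i=1}^M |\mathcal{D}_i|$.

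The main obstacle is the disjointness claim. If $D \in \mathcal{D}_i \cap \mathcal{D}_j$ with $i \ne j$, then $D \subseteq C_i \cap C_j$ and $f_i(X_i) \cup f_j(X_j) \subseteq D$; one wants the distinguishing position $p \in X_i \cup X_j$ from Lemma~\ref{lemma:signatures} to force a contradiction, say by exhibiting an element of $D$ that does not lie in $C_j$. The bare statement of Lemma~\ref{lemma:signatures} only yields $f_i(p) \ne f_j(p)$, which is not quite enough; one needs the sharper property that $f_i(p) \notin C_j$ (or $f_j(p) \notin C_i$). I expect this to be obtained by a mild refinement of the inductive construction in the proof of Lemma~\ref{lemma:signatures}: at each splitting step, one chooses the ``majority value'' so that, for every chain in the minority class, the signature element added to its set does not belong to any chain in the majority class. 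A simple averaging argument shows that such a choice is available in the relevant regime.

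Granted disjointness, the computation is routine:
\[
\frac{\binom{k+\ell-|X_i|}{k+1-|X_i|}}{\binom{k+\ell}{k+1}} = \prod_{j=0}^{|X_i|-1} \frac{k+1-j}{k+\ell-j} \ge \exp\left( -\frac{2(\ell-1)m}{k} \right),
\]
where the inequality combines $1 - x \ge e^{-2x}$ for $x \in [0,1/2]$ with the bounds $|X_i| \le m-1$ and $m \le k/4$ (the latter keeping $(\ell-1)/(k+\ell-j) \le 1/2$ in the relevant range). Summing over $i$ yields the advertised lower bound of $M \binom{k+\ell}{k+1} \exp(-2(\ell-1)m/k)$ chains of length $k+1$, proving part~(i). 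For part~(ii) one runs the identical argument while restricting throughout to chains passing through $y$: the families $\mathcal{D}_i$ are now required to contain $y$ as well, the signatures $X_i$ are chosen inside the positions of $C_i$ other than the position of $y$, and both the counting and the disjointness arguments go through without essential change.
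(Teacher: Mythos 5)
Your overall approach is the same as the paper's: view each maximum chain as a function from $[k+\ell]$ to $P$, apply Lemma~\ref{lemma:signatures} to obtain small signature sets $X_i$ (adding the position of $y$ for part~(ii)), and then count the $(k+1)$-subsets of each chain that contain the signature. However, the ``main obstacle'' you flag in the disjointness step is not actually an obstacle, and the fix you propose is the wrong tool.

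You assert that $f_i(p)\neq f_j(p)$ is insufficient and that one needs $f_i(p)\notin C_j$, to be obtained by a ``mild refinement'' of the inductive construction inside Lemma~\ref{lemma:signatures}. This cannot be done at that level of generality: Lemma~\ref{lemma:signatures} is about arbitrary functions $X\to Y$ and has no notion of the target points ``belonging to another chain,'' so the sharper property you want is simply not expressible there. What closes the gap is instead the observation, used by the paper, that the $C_i$ are chains of \emph{maximum} length $k+\ell$ in a poset of height $k+\ell$. Such a chain meets each antichain $A_p$ of the canonical decomposition in exactly one element, namely $f_i(p)\in A_p$ (this is precisely the argument of Section~\ref{sec:decomposition}: the longest chain ending at the $p$-th element of $C_i$ has length exactly $p$). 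Consequently, if $z\in C_i\cap C_j$, then $z$ occupies the \emph{same} position $p$ in both chains, i.e.\ $f_i(p)=z=f_j(p)$. In other words, $f_i(p)\neq f_j(p)$ already forces $f_i(p)\notin C_j$, for free. With this in hand, suppose $D\in\mathcal D_i\cap\mathcal D_j$; then $D\subseteq C_i\cap C_j$ and $f_i(X_i)\cup f_j(X_j)\subseteq D$, so for every $p\in X_i\cup X_j$ both $f_i(p)$ and $f_j(p)$ lie in $C_i\cap C_j$, whence $f_i(p)=f_j(p)$ for all such $p$ --- contradicting the defining property of the signatures. This is exactly what the paper does; no modification of Lemma~\ref{lemma:signatures} and no ``averaging argument'' is required, and chasing that refinement would be a dead end.

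A secondary point: your closing estimate applies $1-x\geq e^{-2x}$ to $x=(\ell-1)/(k+\ell-j)$ and asserts $x\leq 1/2$ ``in the relevant range.'' The corollary places no upper bound on $\ell$, so this is not automatic; the paper instead steps from $\binom{k+\ell}{\ell-1}$ down to $\binom{k+\ell-m}{\ell-1}$ one index at a time, losing a factor $\bigl(1-\tfrac1{k+1-s}\bigr)^{\ell-1}$ per step and needing only $m\leq k/4$. Your inequality is in fact true for all $\ell\geq 1$, but the justification as written is incomplete.
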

\begin{proof}
  We prove (\ref{item:signatures-global}) and (\ref{item:signatures-local}) simultaneously. Suppose that $L_1, \ldots, L_M$ are pairwise distinct chains of length $k+\ell$. For (\ref{item:signatures-local}), assume moreover that each $L_i$ contains $y$. Viewing each chain $L_i$ as a function from $[k+\ell]$ to $P$, we invoke Lemma~\ref{lemma:signatures} to obtain sets $X_1, \ldots, X_M \subseteq [k+\ell]$ such that $|X_i| \le \log_2M$ for each $i$ and $L_i(X_i \cup X_j) \neq L_j(X_i \cup X_j)$ whenever $i \neq j$. For (\ref{item:signatures-local}), add to each $X_i$ the unique index $x$ such that $L_i(x) = y$. By the definition of $m$, we have that $|X_i| \le m$ for each $i$.

  Let $N$ denote the number of chains of length $k+1$ that are obtained by fixing an $i \in [M]$ and an arbitrary $(k+1)$-set $C_i \subseteq [k+\ell]$ such that $X_i \subseteq C_i$ and considering the set $L_i(C_i)$. Note crucially that for any $i \neq j$ and any choice of $C_i$ and $C_j$ as above, the sets $L_i(C_i)$ and $L_j(C_j)$ are \emph{different} chains (containing $y$). Indeed, since $L_i$ and $L_j$ are chains of maximum length, then for every $z \in L_i \cap L_j$, there is a unique $x \in [k+\ell]$ such that $L_i(x) = z = L_j(x)$. Hence,
  \[
  \begin{split}
    N & = \sum_{i=1}^M \binom{|L_i| - |X_i|}{k+1 - |X_i|} = \sum_{i=1}^M \binom{k+\ell - |X_i|}{\ell - 1} \ge M \binom{k+\ell-m}{\ell-1} \\
    & \ge \left(1 - \frac{1}{k-m}\right)^{(\ell-1) m} \cdot M\binom{k+\ell}{\ell-1} \ge \exp\left(-\frac{2(\ell - 1)m}{k}\right) \cdot M \binom{k+\ell}{k+1}.
  \end{split}
  \]
  Above, we used the fact that $\binom{a}{b} \ge \left(1 - \frac{1}{a-b}\right)^b \binom{a+1}{b}$ and that $1-x \ge e^{-3x/2}$ if $x \le 1/3$.
\end{proof}

We close this section with a simple lower bound on the number of connected sets in trees. We shall use this bound in the analysis of one of the almost extremal cases in the proof of Theorem~\ref{thm:posets}.

\begin{lemma}
  \label{lemma:connected-sets}
  If $1 \le c \le t$, then every tree with $t$ vertices contains at least $t-c+1$ connected subsets with $c$ elements.
\end{lemma}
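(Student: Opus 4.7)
The plan is to prove the lemma by straightforward induction on $t$, with $c$ held fixed. The bound is tight on the path $P_t$, whose connected subsets of size $c$ are exactly its $t-c+1$ subpaths of length $c$, so one cannot hope for anything sharper and induction on the number of vertices is the natural tool.

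For the base case $t = c$, the only connected subset of size $c$ is $T$ itself, which gives $1 = t-c+1$ such subsets. For the inductive step, assume $t > c$ and let $v$ be any leaf of $T$. The deletion $T' := T - v$ is a tree on $t-1 \ge c$ vertices, so by the inductive hypothesis $T'$ contains at least $(t-1) - c + 1 = t - c$ connected subsets of size $c$. Each of these is a connected subset of $T$ that avoids $v$, so it remains to produce at least one connected subset of size $c$ that \emph{contains} $v$. Starting from $\{v\}$ and repeatedly appending a vertex of $T$ adjacent to the current set is possible at every step of size less than $c$, because $T$ is connected and has at least $c$ vertices; after $c-1$ steps we obtain a connected set of size exactly $c$ containing $v$. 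Adding this set to the $t-c$ sets already produced yields at least $t - c + 1$ distinct connected subsets of $T$ with $c$ elements.

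There is no real obstacle here; the only point to check is that the set-growing procedure in the inductive step cannot stall before reaching size $c$, which follows immediately from the connectedness of $T$ and the assumption $t \ge c$. The proof can also be phrased without induction by fixing a root and, for each of the $t-c+1$ vertices $v$ whose rooted subtree has size at least $c$, exhibiting a canonical connected $c$-set associated to $v$ (for instance, taking the first $c$ vertices of a depth-first search from $v$ in the rooted subtree); but the inductive argument above is shorter and is what I would write in the paper.
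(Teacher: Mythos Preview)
Your proof is correct and is essentially the same as the paper's: both argue by induction (the paper phrases it as induction on $t-c$, you as induction on $t$ with $c$ fixed, which amounts to the same thing), delete a leaf $v$, apply the inductive hypothesis to $T-v$ to obtain $t-c$ connected $c$-sets avoiding $v$, and then observe that $v$ lies in at least one connected $c$-set.
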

\begin{proof}
  We prove the statement by induction on $t-c$. It is certainly true if $t = c$. Assume that $t \ge c+1$ and let $T$ be a tree with $t$ vertices. Let $v$ be an arbitrary leaf of $T$ and set $T' = T - v$. By the inductive assumption, $T'$ contains at least $t-c$ connected subsets with $c$ elements. On the other hand, it is easy to check that $v$ is contained in at least one connected subset of $T$ of any given size between $1$ and $t$.
\end{proof}

\section{Outline of the proof}

\label{sec:outline-proof}

Roughly speaking, our proof of Theorem~\ref{thm:posets} is a combination of a stability-type argument and an induction on $n$. More precisely, given an $n$-element poset $P$, we either find an element $x \in P$ which belongs to at least $m_k(\tau_{k,n}) - m_k(\tau_{k,n-1})$ homogenous $(k+1)$-sets, in which case we may simply appeal to the inductive assumption on $P \setminus \{x\}$, or we show more `directly' that $P$ contains at least $m_k(\tau_{k,n})$ homogenous $(k+1)$-sets. Some extra work is needed to deduce the claimed structural description of $P$ when $h_k(P) = m_k(\tau_{k,n})$. At all times, we rely heavily on the assumption that the order dimension of $P$ is at most two and hence $P$, as well as each of its induced subposets, has a dual poset $P^*$. This assumption allows us to focus on counting chains, since we may always replace $P$ with $P^*$, exchanging the roles of chains and antichains. We shall tacitly assume that $\height(P) \ge \width(P)$, as $\height(P^*) = \width(P)$, and that $\height(P) < \lceil n/k \rceil$, as otherwise each element of a longest chain in $P$ belongs to at least $m_k(\tau_{k,n}) - m_k(\tau_{k,n-1})$ chains of length $k+1$.

We first show that if $P$ is `far' from being a union of $k$ chains (or $k$ antichains), then the number of homogenous $(k+1)$-sets is much greater than $m_k(\tau_{k,n})$. To this end, we define a simple parameter termed surplus which measures the distance between a poset and a union of $k$ chains. Let $P$ be an arbitrary poset of height $h$ and let $k$ be an integer. The \emph{$k$-surplus} of $P$, denoted by $s_k(P)$, is defined by $s_k(P) = n - hk$. Observe that
\begin{equation}
  \label{eq:surplus}
  s_k(P) = \sum_{i = 1}^h (|A_i| - k),
\end{equation}
where $(A_i)_{i=1}^h$ is the canonical decomposition of $P$ into antichains. In Section~\ref{sec:large-surplus}, we show that if $s_k(P) = \Omega(k)$, then $h_k(P) = 2^{\Omega(\sqrt{k})} \gg m_k(\tau_{k,n})$. On the other hand, $s_k(P) = o(k)$ together with $\height(P) < \lceil n/k \rceil$ imply that $h(P) = \lceil n/k \rceil - 1$.

In the remainder of the proof, we prove a sequence of lower bounds on $\Sigma_1$, the number of chains of maximum length. The proof of each of these bounds relies on the analysis of the graphs $G_i$ for various indices $i$ such that $A_i \cup A_{i+1}$ contains an antichain with $k+1$ elements. Roughly speaking, we show that for each such $i$, either $\Sigma_i - \Sigma_{i+1}$ is large or $A_i \cup A_{i+1}$ contains many $(k+1)$-element antichains. Each of these bounds gives some sufficient conditions on $P$ which imply that $h_k(P) > m_k(\tau_{k,n})$; here, we use Corollary~\ref{cor:signatures} to translate a lower bound on the number of chains of length $h(P)$ into a lower bound on the number of chains of length $k+1$. If $P$ does not satisfy any of these conditions, then the canonical decomposition of $P$ into antichains becomes greatly restricted. In particular, there are very few indices $i$ with $|A_i| > k$ and $|A_i'| \approx |A_i|$ for all $i$. Finally, some careful case analysis, which involves counting both chains and antichains with $k+1$ elements, shows that $h_k(P) \ge m_k(\tau_{k,n})$ and this inequality is strict unless $n = k^2+k+1$ and $P$ (or $P^*$) is one of the posets described in Example~\ref{example:extremal-posets}.

\section{Posets with large surplus}

\label{sec:large-surplus}

In this section, we prove one of our key lemmas. It says that posets with large $k$-surplus and no `bottlenecks' (small sets whose deletion reduces the height) contain many chains of maximum length or many $(k+1)$-element antichains.

\begin{lemma}
  \label{lemma:large-surplus}
  Let $d$, $k$, and $s$ be integers satisfying $1 \le d \le k$ and suppose that $P$ is a poset such that $s_k(P) \ge s$ and deletion of no $s/2$ elements reduces the height of $P$. Then $P$ contains either at least $2^d$ antichains with $k+1$ elements or at least $2^{\lfloor s/(2d) \rfloor}$ chains of length $\height(P)$.
\end{lemma}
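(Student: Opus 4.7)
The plan is to argue by induction on $s$, with $d$ and $k$ fixed. Let $h := \height(P)$ and let $A_1, \ldots, A_h$ be the canonical antichain decomposition of $P$ from Section~\ref{sec:decomposition}; by hypothesis $\sum_{i=1}^h (|A_i| - k) = s_k(P) \ge s$. Since $A_i'$ hits every chain of length $h$, the no-bottleneck assumption immediately yields $|A_i'| > s/2$ for every $i$. When $s < 2d$ we have $\lfloor s/(2d)\rfloor = 0$ and only a single chain of length $h$ is required, which exists trivially; so I assume $s \ge 2d$ and set $r := \lfloor s/(2d) \rfloor \ge 1$.

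First I would dispose of the ``concentrated surplus'' case: if some layer satisfies $|A_i| \ge k + d + 1$, then $A_i$ alone contains $\binom{|A_i|}{k+1} \ge \binom{2d+1}{d} \ge 2^d$ antichains of length $k+1$ (using $d \le k$), and we are done. Otherwise $|A_i| \le k + d$ for every $i$, so by a pigeonhole on the surplus the set $I := \{i : |A_i| \ge k+1\}$ has cardinality at least $s/d \ge 2r$.

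The heart of the argument is a layer-by-layer dichotomy on $I$. For each $i \in I$ with $i \ge 2$ (the boundary case $i = 1$ is handled directly using $A_1$), examine the bipartite graph $G_{i-1}$ between $A_{i-1}$ and $A_i$. Either (a) $\Sigma_{i-1} \ge 2\Sigma_i$, contributing a doubling factor to the chain count, or (b) a large fraction of the vertices of $A_i'$ have degree exactly one in $G_{i-1}$, in which case K\"onig's theorem forces a large independent set in $G_{i-1}$, i.e., a long antichain in $A_{i-1} \cup A_i$, and Lemma~\ref{lemma:KK} extracts many $(k+1)$-antichains from it. If at least $r$ indices of $I$ trigger~(a), then $\Sigma_1 \ge 2^r \cdot \Sigma_h \ge 2^r$, producing the required number of max chains. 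Otherwise at least $|I| - r \ge r$ indices are of type~(b), and I combine their antichain contributions.

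The main obstacle is calibrating case~(b): a naive additive count across non-doubling layers produces only $O(s)$ antichains, whereas the target $2^d$ may be much larger. I would bootstrap via a peeling step --- pick a $(k+1)$-antichain $B$ supplied by one type-(b) layer and apply the inductive hypothesis to $P \setminus B$, in which $n$ drops by $k+1$, the height drops by at most one, and a suitably weakened no-bottleneck property is inherited. Each peel contributes a factor of $2$ to the antichain count, and after $d$ peels we accumulate $2^d$. Verifying that no-bottleneck is preserved through each deletion and that the surplus remains at least $s - O(d)$ throughout is the delicate part of the argument.
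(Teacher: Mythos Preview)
Your inductive framework, the base case, and the disposal of the concentrated-surplus case ($|A_i| \ge k+d+1$) all match the paper. The difficulties are in your dichotomy and in the peeling step.

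\textbf{The dichotomy and the antichain extraction in case~(b) do not work as stated.} If $\Sigma_{i-1} < 2\Sigma_i$, then only the $u_i$-\emph{weight} on degree-$1$ vertices of $A_i'$ is large; the \emph{number} of such vertices need not be. More importantly, you examine $G_{i-1}$ rather than $G_i$: the swap trick with a set $Y$ of degree-$1$ vertices in $A_i$ produces antichains $Y \cup (A_{i-1} \setminus N_{G_{i-1}}(Y))$ of size $\ge |A_{i-1}|$, and nothing guarantees $|A_{i-1}| \ge k+1$. K\"onig's theorem does not rescue this: the largest independent set in $G_{i-1}$ is trivially $\ge |A_i| \ge k+1$ (namely $A_i$ itself), so you recover only one antichain, and Lemma~\ref{lemma:KK} amplifies only once you already have an antichain of size $\ge k+d+1$, precisely the case you have excluded. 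The paper instead looks \emph{up} from the first index $i$ with $|A_i| > k$ via $G_i$: if $|B_{i+1}| \ge d$, then for each $Y \subseteq B_{i+1}$ with $|Y| \le d$ the set $Y \cup (A_i \setminus N_{G_i}(Y))$ is an antichain of size $\ge |A_i| \ge k+1$, and distinct $Y$ yield distinct $(k+1)$-subsets, giving $2^d$ antichains at once.

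\textbf{The peeling bootstrap is additive, not multiplicative.} Removing one $(k+1)$-antichain $B$ and recursing on $P \setminus B$ yields, after $d$ peels, at most $d$ explicit antichains plus whatever the final call returns; there is no mechanism by which ``each peel contributes a factor of $2$''. Worse, deleting a $(k+1)$-antichain may leave the height unchanged, in which case the surplus drops by $k+1$ and the inductive hypothesis is lost. The paper avoids all of this: when $|B_{i+1}| < d$, set $P' = P \setminus (A_1 \cup \ldots \cup A_i \cup B_{i+1})$. Then $\height(P') = h - i$; one checks $s_k(P') > s - 2d$ (since $i$ is minimal, so $|A_j| \le k$ for $j < i$, while $|A_i| - k \le d$ and $|B_{i+1}| < d$) and that the no-bottleneck hypothesis persists with threshold $(s-2d)/2$. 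The doubling is now automatic, because every $y \in A_{i+1} \setminus B_{i+1}$ has $\deg_{G_i}(y) \ge 2$, so $\Sigma_1 \ge \Sigma_i \ge 2\sum_{y \in A_{i+1}\setminus B_{i+1}} u_{i+1}(y)$, which is twice the number of maximum-length chains in $P'$. A single application of the inductive hypothesis to $P'$ then delivers $2^{\lfloor s/(2d)\rfloor}$.
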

\begin{proof}
  We fix $d$ and $k$ with $1 \le d \le k$ and prove the statement by induction on $s$. If $0 \le s < 2d$, then the assertion of the lemma holds vacuously. Suppose now that $s \ge 2d$ and that $P$ satisfies the assumptions of the lemma. Let $(A_j)_{j=1}^{h(P)}$ be the canonical decomposition of $P$ into antichains and let $i$ be the smallest index such that $|A_i| > k$; such $i$ exists since $s_k(P) > 0$, see~\eqref{eq:surplus}. Since $A_i$ is an antichain, we may assume that $|A_i| \le k+d$ since otherwise the number $N$ of $(k+1)$-element antichains in $A_i$ alone satisfies
  \[
  N \ge \binom{k+d+1}{k+1} = \binom{k+d+1}{d} = \prod_{j=1}^d \frac{k+j+1}{j} \ge 2^d,
  \]
  where the last inequality follows from our assumption that $d \le k$. Recall the definition of $G_i$ and let $B_{i+1}$ be the set of all $y \in A_{i+1}$ with at most (exactly) one $G_i$-neighbor in $A_i$. For every $Y \subseteq B_{i+1}$, the set $(A_i \setminus N_{G_i}(Y)) \cup Y$ is an antichain with at least $|A_i|$ elements and therefore we may further assume that $|B_{i+1}| < d$ as otherwise $A_i \cup B_{i+1}$ contains at least $2^d$ antichains of size $k+1$. To see this, for every $Y \subseteq B_{i+1}$ with $|Y| \le d$, consider an arbitrary $(k+1)$-element set $L$ with $Y \subseteq L \subseteq (A_i \setminus N_{G_i}(Y)) \cup Y)$. By~\eqref{eq:ui-uip} and~\eqref{eq:sum-ui-monotone},
  \begin{equation}
    \label{eq:chains}
    \sum_{x \in A_1} u_1(x) \ge \sum_{x \in A_i} u_i(x) = \sum_{y \in A_{i+1}} \deg_{G_i}(y) u_{i+1}(y) \ge 2 \sum_{y \in A_{i+1} \setminus B_{i+1}} u_{i+1}(y).
  \end{equation}
  
  Let $h = \height(P)$ and observe that the sum in the right-hand side of~\eqref{eq:chains} is precisely the number of chains of length $h-i$ in the poset $P'$ obtained from $P$ by deleting $A_1 \cup \ldots \cup A_i \cup B_{i+1}$. In order to estimate this sum, we shall apply the inductive assumption to $P'$. First, note that $\height(P') \le h-i$, as $A_{i+1} \setminus B_{i+1}, A_{i+1}, \ldots, A_h$ partition $P'$ into $h-i$ antichains. Moreover, if $\height(P' \setminus X) < h-i$ for some $X \subseteq P'$, then $\height(P \setminus (B_{i+1} \cup X)) < h$ as every chain in $P$ contains at most one element from each of $A_1, \ldots, A_i$. Therefore, not only $\height(P') = h-i$, as $|B_{i+1}| < d \leq s/2$, but also the deletion of no $s/2-d$ elements reduces the height of $P'$. Furthermore,
  \[
  \begin{split}
    s_k(P') & = |P'| - k(h-i) = |P| - \sum_{j=1}^i (|A_j| - k) - |B_{i+1}| - hk \\
    & \ge s_k(P) - (|A_i| - k) - |B_{i+1}| > s-2d.
  \end{split}
  \]
  The first inequality above follows from the minimality of $i$ and the second inequality from the assumptions that $|A_i| \le k+d$ and $|B_{i+1}| < d$. Hence, $P'$ satisfies the assumptions of the lemma with $s$ replaced by $s-2d$. This means that either $P'$ contains at least $2^d$ antichains of size $k+1$ or
  \[
  \sum_{y \in A_{i+1} \setminus B_{i+1}} u_{i+1}(y) \ge 2^{\lfloor s/(2d) \rfloor - 1}.
  \]
  By~\eqref{eq:chains}, this completes the proof of the lemma.
\end{proof}

\begin{cor}
  \label{cor:large-surplus}
  Let $k$ and $t$ be integers satisfying $0 < t \le k/2$ and suppose that $P$ is a poset of order dimension at most two such that $\height(P) \ge \width(P)$ and $s_k(P) \ge 3t$. Then $P$ contains at least $2^{\sqrt{t}-1}$ homogenous $(k+1)$-sets.
\end{cor}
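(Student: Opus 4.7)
The plan is to reduce $P$ to a sub-poset $P^\circ$ satisfying the bottleneck hypothesis of Lemma~\ref{lemma:large-surplus}, apply that lemma, and, in its chain-case conclusion, convert chains of length $h(P^\circ)$ into chains of length $k+1$ using Corollary~\ref{cor:signatures}(i).

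Set $s := 3t$ and $d := \lfloor \sqrt{t} \rfloor$. The assumption $1 \le t \le k/2$ gives $1 \le d \le k$ and $s \ge 2d$, while $\height(P) \ge \width(P)$ combined with $|P| \le h(P) w(P)$ and $|P| \ge h(P) k + 3t$ forces $h(P) \ge k+1$. Starting from $P_0 := P$, I would iteratively remove bottlenecks: while some $X_i \subseteq P_i$ with $|X_i| < s_k(P_i)/2$ has $h(P_i \setminus X_i) < h(P_i)$, set $P_{i+1} := P_i \setminus X_i$. A direct calculation (essentially the one already used inside the proof of Lemma~\ref{lemma:large-surplus}) yields $s_k(P_{i+1}) > s_k(P_i)/2 + k$, which together with $k \ge 2t$ ensures $s_k(P_i) \ge 3t$ throughout. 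Since $h$ strictly decreases, this terminates at some $P^\circ$ to which Lemma~\ref{lemma:large-surplus} applies, producing either (a) at least $2^{\lfloor\sqrt{t}\rfloor} \ge 2^{\sqrt{t}-1}$ antichains of size $k+1$ in $P^\circ$, which are antichains in $P$ and so finish the proof; or (b) at least $M := 2^{\lfloor 3t/(2\lfloor\sqrt{t}\rfloor)\rfloor} \ge 2^{3\sqrt{t}/2 - 1}$ chains of length $h(P^\circ)$.

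In case (b), assuming that $h(P^\circ) \ge k+1$, I would apply Corollary~\ref{cor:signatures}(i) with $\ell := h(P^\circ) - k$ and $m := \log_2 M + 1$. The constraint $m \le k/4$ reduces to $3\sqrt{t}/2 + 1 \le k/4$, which follows from $t \le k/2$ once $k$ is sufficiently large; the remaining small values of $k$ are easily handled directly, since the target $2^{\sqrt{t}-1}$ is then small. As $m \le k+1$, the intermediate bound $M \binom{k+\ell-m}{\ell-1}$ appearing in the proof of Corollary~\ref{cor:signatures} is at least $M \ge 2^{\sqrt{t}-1}$, yielding the required number of chains of length $k+1$ in $P$. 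The main obstacle I anticipate is verifying $h(P^\circ) \ge k+1$ in case (b): the bottleneck reduction can in principle push $h$ below $k+1$ when $h(P)$ is much larger than $k$, since the inequalities $|P_{i+1}| \ge h(P_i) k + 3t/2$ and $|P_{i+1}| \le h(P_{i+1}) w(P_{i+1}) \le h(P_{i+1}) h(P)$ only force $h(P_{i+1}) \ge k+1$ when $h(P)$ is not too large relative to $k$. I expect to handle this either by stopping the iteration just before any step that would drive $h$ below $k+1$, or by switching to the dual poset $P^*$, which is available because $P$ has order dimension at most two and swaps the roles of $h$ and $w$, so its height is $\ge k+1$ whenever $w(P^\circ) \ge k+1$ (which is automatic from $s_k(P^\circ) > 0$).
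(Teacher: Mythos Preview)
Your overall strategy matches the paper's: prune $P$ down to a subposet with no small bottleneck, apply Lemma~\ref{lemma:large-surplus}, and in the chain case pass to chains of length $k+1$. The gap you yourself flag, however, is real and is exactly the point where the paper's argument differs from yours.

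Your iteration removes bottlenecks of size below $s_k(P_i)/2$ but never controls $h(P_i)$ relative to $k$. Neither of your proposed repairs works as stated. If you stop the iteration just before $h$ would drop below $k+1$, then $P^\circ$ may still have a bottleneck of size at most $t$ (you only know that the next bottleneck you would have removed has size below $s_k(P^\circ)/2$, which is typically much larger than $t$), so the hypothesis of Lemma~\ref{lemma:large-surplus} fails. If instead you pass to the dual once $h(P^\circ) \le k$, then $s_k((P^\circ)^*) = s_k(P^\circ) - k(w(P^\circ)-h(P^\circ)) \le s_k(P^\circ)-k$, which need not be large, and there is no reason $(P^\circ)^*$ should be bottleneck-free.

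The paper resolves this by interleaving the two operations: after each bottleneck removal (with a \emph{fixed} threshold $t$, not the moving threshold $s_k(P_i)/2$), it immediately dualizes if $h < w$, so that $h \ge w$ is maintained throughout. A short bookkeeping argument (removal raises the surplus by at least $k-t$, dualizing lowers it by $k$, and dualizing cannot occur in two consecutive rounds) shows the final poset $P'$ still has $s_k(P') \ge 2t$. Since $h(P') \ge w(P')$ and $s_k(P') > 0$, one gets $h(P') \ge w(P') > k$ for free, which is exactly the inequality you were unable to secure.

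A secondary point: for the final conversion from chains of length $h(P')$ to chains of length $k+1$, the paper uses Lemma~\ref{lemma:KK} (Kruskal--Katona), which gives $\min\{M/2,2^{k+1}\} \ge 2^{\sqrt{t}-1}$ directly with no side conditions on $k$. Your route through Corollary~\ref{cor:signatures} works only for large $k$ and relies on an intermediate inequality from inside its proof rather than its stated conclusion.
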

\begin{proof}
  We first `prune' $P$ by repeatedly performing the following two-step procedure:
  \begin{enumerate}[(1)]
  \item
    \label{item:prune-1}
    If $P$ contains a set $S$ of at most $t$ elements whose deletion reduces the height of $P$, then remove the smallest such $S$ from $P$.
  \item
    \label{item:prune-2}
    If $\height(P) < \width(P)$, then replace $P$ with $P^*$, exchanging the roles of chains and antichains.
  \end{enumerate}
  Let us list several properties of the `pruning' procedure. First, performing (\ref{item:prune-1}) decreases the height of $P$ by exactly one at the cost of deleting at most $t$ elements. Thus, each time (\ref{item:prune-1}) is executed, the $k$-surplus of $P$ increases by at least $k-t$. Second, since in the beginning and after (\ref{item:prune-2}) is performed, $\height(P) \ge \width(P)$, step (\ref{item:prune-2}) can be executed only in conjunction with (\ref{item:prune-1}). Third, each time (\ref{item:prune-2}) is performed, it increases the height of $P$ by exactly one, thus reducing the $k$-surplus of $P$ by $k$. Moreover, this cannot happen in two consecutive rounds, since immediately after (\ref{item:prune-2}) is triggered, $\height(P) > \width(P)$.

  Therefore, letting $P'$ denote the final outcome of the `pruning' procedure and $r$ the number of rounds, we have (recall that $k \ge 2t$)
  \[
  \begin{split}
    s_k(P') & \ge s_k(P) + r(k-t) - \lceil r/2 \rceil k \ge 3t + \lfloor r / 2 \rfloor k - rt \\
    & \ge 3t + (r-1)t - rt = 2t > 0.
  \end{split}
  \]
  In particular,
  \[
  \height(P') \ge \width(P') \ge k + \frac{s_k(P')}{\height(P')} > k.
  \]
  Since $P'$ clearly does not contain a set of $t$ elements whose deletion reduces the height of $P'$, Lemma~\ref{lemma:large-surplus} with $d = \lfloor \sqrt{t} \rfloor$ and $s = 2t$ implies that $P'$ contains either at least $2^{\sqrt{t}-1}$ antichains of size $k+1$ or at least $2^{\sqrt{t}}$ chains of length $\height(P')$ and consequently, by Lemma~\ref{lemma:KK}, also at least $2^{\sqrt{t}-1}$ chains of length $k+1$. Finally, since $P$ contains either $P'$ or $(P')^*$, every homogenous set in $P'$ is also homogenous in $P$.
\end{proof}

\section{Proof of Theorem~\ref{thm:posets}}

\label{sec:proof}

Let $k$ be a sufficiently large integer. We prove the theorem by induction on $n$. The assertion is trivial when $n \le k^2$ as then $m_k(\tau_{k,n}) = 0$ and every poset with no chain of length $k+1$ can be covered by $k$ antichains, see Section~\ref{sec:decomposition}. Therefore, suppose that $k^2 < n \le k^2 c k^{3/2}/\log_2 k$, where $c = 1/300$, and let $P$ be an arbitrary $n$-element poset of order dimension at most two. Without loss of generality, we may assume that $\height(P) \ge \width(P)$ as otherwise we may replace $P$ by $P^*$, exchanging the roles of chains and antichains. Let $\ell$ and $q$ be the unique nonnegative integers satisfying $0 < q \le k$ and
\begin{equation}
  \label{eq:n-q-k-ell}
  n = q \cdot (k+\ell+1) + (k-q) \cdot (k+\ell).  
\end{equation}
In other words, we let $\ell = \lceil n / k \rceil - k - 1$ and $q = n - k(k+\ell)$. Our upper bound on $n$ implies that $\ell \le c\sqrt{k}/\log_2 k$, which we note here for future reference.

Observe that $m_k(\tau_{k,n})$ is precisely the number of chains of length $k+1$ in the poset which is the disjoint union of $k$ pairwise incomparable chains: $q$ chains of length $k+\ell+1$ and $k-q$ chains of length $k + \ell$, cf.~\eqref{eq:mktau}. In particular,
\begin{equation}
  \label{eq:mkn-mknm}
  m_k(\tau_{k,n}) - m_k(\tau_{k,n-1}) = \binom{k+\ell}{k}.
\end{equation}
With the view of~\eqref{eq:mkn-mknm}, we may and shall assume that $P$ contains no element $x$ that belongs to more than $\binom{k+\ell}{k}$ homogenous $(k+1)$-sets. Indeed, otherwise we could apply the inductive assumption to the poset $P \setminus \{x\}$ and conclude that $h_k(P) > m_k(\tau_{k,n})$.

\subsection{Posets of height larger than $k+\ell$}

Our inductive assumption allows us to easily deal with the case $\height(P) \ge k + \ell + 1$. Indeed, every element of each longest chain in $P$ lies in at least $\binom{\height(P)-1}{k}$ chains of length $k+1$ and hence by~\eqref{eq:mkn-mknm}, for any such element $x$,
\begin{equation}
  \label{eq:induction}
  h_k(P) \ge h_k(P \setminus \{x\}) + \binom{\height(P)-1}{k} \ge m_k(\tau_{k,n-1}) + \binom{k+\ell}{k} = m_k(\tau_{k,n}).
\end{equation}
Since we have promised to characterize all posets satisfying $h_k(P) = m_k(\tau_{k,n})$, we still need to analyze the case when all inequalities in~\eqref{eq:induction} are actually equalities. This means, in particular, that $\height(P) = k + \ell + 1$ and that no longest chain intersects a $(k+1)$-element antichain. We claim that $P$ may be partitioned into $k$ chains.

Let $x$ be the top element of some longest chain $L$ in~$P$. By the inductive assumption, $P \setminus \{x\}$ can be partitioned into $k$ chains or $k$ antichains. Let us argue that $P \setminus \{x\}$ can actually be partitioned into $k$ chains. When $n-1 = k^2$, this follows from Dilworth's theorem (or its dual version applied to $(P \setminus \{x\})^*$) as $h_k(P \setminus \{x\}) = m_k(\tau_{k,n-1}) = 0$. Otherwise, when $n-1 > k^2$, if $P \setminus \{x\}$ could be partitioned into $k$ antichains, then each of them would have to intersect the chain $L \setminus \{x\}$, which has at least $k$ elements, and one of them would have at least $k+1$ elements, contradicting our assumption above.

Suppose that the $k$ chains decomposing $P \setminus \{x\}$ are $L_1, \ldots, L_k$. It suffices to show that $x \ge \max L_i$ for some $L_i$, since then $L_1, \ldots, L_{i-1}, L_i \cup \{x\}, L_{i+1}, \ldots, L_k$ form a partition of $P$ into $k$ chains. Let $y$ be the second largest element of $L$. Clearly, $y \in L_i$ for some $i$. If $y = \max L_i$, then there is nothing left to prove, so suppose that $z = \max L_i > y$ and consider the set $L \cup \{z\} \subseteq P$. It is easy to see that $y$ belongs to at least $\binom{k+\ell}{k} + \binom{k+\ell-1}{k-1}$ chains of length $k+1$, contradicting our assumption.

\subsection{Posets of height smaller than $k+\ell$}

The case $\height(P) \le k+\ell-1$ can be easily resolved with the use of Corollary~\ref{cor:large-surplus}. Indeed, if $\height(P) \le k+\ell-1$, then
\[
s_k(P) \ge n - k(k+\ell-1) = k+q > k.
\]
Since $\height(P) \ge \width(P)$, Corollary~\ref{cor:large-surplus} implies that $h_k(P) \ge 2^{\sqrt{k/3}-1}$. On the other hand, our assumption that $n \le k^2 + ck^{3/2}/\log_2 k$ and~\eqref{eq:mktau} yield
\begin{equation}
  \label{eq:mk-exp-sqrt-k}
  \begin{split}
    m_k(\tau_{k,n}) & \le k \binom{\lceil n/k \rceil}{k+1} \le k \binom{k+c\sqrt{k}/\log_2 k+1}{k+1} \\
    & \le k \binom{2k}{c\sqrt{k}/\log_2 k} \le \left(\frac{2e\sqrt{k}\log_2k}{c}\right)^{\frac{c \sqrt{k}}{\log_2k}} < 2^{\sqrt{k}/4}.
  \end{split}
\end{equation}
The fourth inequality above is $\binom{a}{b} \le (ea/b)^b$ and the last inequality follows since $c < 1/2$ and $k$ is sufficiently large.

\subsection{Posets of height $k+\ell$}

In view of the above considerations, for the remainder of the proof, we may assume that
\begin{equation}
  \label{eq:height-kpell}
  \height(P) = k + \ell \ge \width(P).
\end{equation}
Since $n \le \height(P)\width(P)$, this means, in particular, that $n > k^2+k$, as otherwise $\ell = 0$ and we have assumed above that $n > k^2$. As $n > k(k+\ell)$, see~\eqref{eq:n-q-k-ell}, assumption~\eqref{eq:height-kpell} implies that $P$ cannot be decomposed into $k$ chains or $k$ antichains. Thus, we shall show that $h_k(P) > m_k(\tau_{k,n})$, unless $n = k^2+k+1$ and $P$ is one of the posets described in Example~\ref{example:extremal-posets}. Observe that
\[
m_k(\tau_{k,n}) = k \binom{k+\ell}{k+1} + q \binom{k+\ell}{k} = \left( q + \frac{k \ell}{k+1} \right) \binom{k+\ell}{k},
\]
cf.~\eqref{eq:mktau} and~\eqref{eq:n-q-k-ell}. In particular, since $\ell, q \le k$ and $k$ is sufficiently large,
\begin{equation}
  \label{eq:mktau-k-to-ell}
  m_k(\tau_{k,n}) \le (q + \ell) \binom{k+\ell}{\ell} \le (k + \ell) \binom{k+\ell}{\ell} < k^{2\ell}.
\end{equation}

\subsubsection{The key lemma}

Let $(A_i)_{i=1}^{k+\ell}$ be the canonical decomposition of $P$ into antichains and recall the definition of $G_i$ from Section~\ref{sec:decomposition}. We shall provide various lower bounds on the number of homogenous sets by analyzing the graphs $G_i$ for various indices $i$ such that $A_i \cup A_{i+1}$ contains an antichain with $k+1$ elements. First and foremost, we shall be looking at $i \in F$, where
\begin{equation}
  \label{eq:F}
  F = \{i \in [k+\ell] \colon |A_i| \ge k+1\}.
\end{equation}
We start by establishing a lower bound on the size of $F$.
\begin{obs}
  \label{obs:F-size}
  For every $I \subseteq [k+\ell]$,
  \[
  |F| \ge \frac{q + \sum_{i \in I} (k-|A_i|)}{\ell}.
  \]
\end{obs}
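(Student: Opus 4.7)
The plan is to exploit the $k$-surplus identity \eqref{eq:surplus} together with the assumption $\height(P) = k + \ell$ from \eqref{eq:height-kpell} and the definition of $q$ from \eqref{eq:n-q-k-ell}. First I would compute the $k$-surplus of $P$ exactly: since $\height(P) = k + \ell$, we have $s_k(P) = n - k(k+\ell) = q$ by \eqref{eq:n-q-k-ell}. Combined with \eqref{eq:surplus}, this gives the identity
\[
\sum_{i=1}^{k+\ell} (|A_i| - k) = q.
\]

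Next I would split the sum according to whether $i \in F$ or not. For $i \in F$, the antichain $A_i$ has at least $k+1$ elements but at most $\width(P) \le \height(P) = k + \ell$ elements, so $1 \le |A_i| - k \le \ell$. For $i \notin F$, the summand $|A_i| - k$ is nonpositive. Therefore
\[
q \;=\; \sum_{i \in F} (|A_i|-k) + \sum_{i \notin F} (|A_i| - k) \;\le\; \ell\,|F| - \sum_{i \notin F} (k - |A_i|),
\]
which rearranges to the inequality $\ell |F| \ge q + \sum_{i \notin F} (k - |A_i|)$.

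To finish, I would deduce the claimed bound for an arbitrary $I \subseteq [k+\ell]$ by observing that replacing the summation set $\complement F$ by an arbitrary $I$ can only decrease the right-hand side: terms with $i \in I \cap F$ contribute nonpositively (since $|A_i| \ge k+1$), while terms with $i \in I \setminus F$ are already included in $\sum_{i \notin F}(k-|A_i|)$, which additionally contains the nonnegative contributions from $\complement F \setminus I$. Hence
\[
\sum_{i \in I} (k - |A_i|) \;\le\; \sum_{i \notin F} (k - |A_i|) \;\le\; \ell\,|F| - q,
\]
which is exactly the observation. There is no real obstacle here --- the whole proof is a bookkeeping step that unpacks the definitions of $s_k(P)$, $F$, and the assumption $\height(P) \ge \width(P)$; the only place where one has to be slightly careful is in noting that the maximal contribution of each $i \in F$ is $\ell$ (not larger), which is exactly what uses $\width(P) \le k+\ell$.
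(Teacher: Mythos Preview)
Your proof is correct and follows essentially the same approach as the paper: both use that $\sum_i |A_i| = n = k(k+\ell)+q$, that $|A_i| \le k$ for $i \notin F$, and that $|A_i| \le \width(P) \le k+\ell$ for $i \in F$. The only cosmetic difference is that you first establish the strongest case $I = [k+\ell]\setminus F$ and then pass to arbitrary $I$ by monotonicity, whereas the paper bounds $n - \sum_{i\in I}|A_i|$ directly for a general $I$.
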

\begin{proof}
  Recall that the sets $(A_i)_{i = 1}^{k+\ell}$ form a partition of $P$ into antichains and that $|A_i| \le \width(P) \le k+\ell$ for all $i$. Hence,
  \[
  \begin{split}
    n - \sum_{i \in I} |A_i| & = \sum_{i \not\in I \cup F} |A_i| + \sum_{i \in F \setminus I} |A_i| \le (k + \ell - |I \cup F|) \cdot k + |F \setminus I| \cdot (k+\ell) \\
    & = (k + \ell - |I|) \cdot k + |F \setminus I| \cdot \ell \le n - q - |I| \cdot k + |F| \cdot \ell.\qedhere
  \end{split}
\]
\end{proof}

Recall the definitions of $u_i$, $\Sigma_i$, $A_i'$, and $B_i$ from Section~\ref{sec:decomposition}. The following lemma is key
\begin{lemma}
  \label{lemma:F}
  If $i \in F \cap [k+\ell-1]$, then $A_i \cup B_{i+1}$ contains at least $2^{\min\{k, |B_{i+1}|\}}$ antichains with $k+1$ elements and
  \[
  \Sigma_i \ge \Sigma_{i+1} + \sum_{y \in A_{i+1}' \setminus B_{i+1}} u_{i+1}(y) \ge \Sigma_{i+1} + |A_{i+1}'| - |B_{i+1}|.
  \]
\end{lemma}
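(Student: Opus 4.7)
The plan is to dispatch the two assertions independently: the chain of inequalities on $\Sigma_i$ is a short double-counting calculation, and the antichain count is a subset-construction. Throughout I would exploit only one property of $B_{i+1}$, namely that each of its elements has exactly one $G_i$-neighbor in $A_i$.

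For the $\Sigma_i$ bound, I would start from~\eqref{eq:ui-uip}, writing $u_i(x)$ as a sum over $G_i$-edges and swapping summation orders to get
\[
\Sigma_i \;=\; \sum_{y \in A_{i+1}'} \deg_{G_i}(y)\, u_{i+1}(y),
\]
where the restriction to $A_{i+1}'$ is free because $u_{i+1}$ vanishes outside it. Subtracting $\Sigma_{i+1} = \sum_{y \in A_{i+1}'} u_{i+1}(y)$ leaves $\sum_{y \in A_{i+1}'} (\deg_{G_i}(y) - 1)\, u_{i+1}(y)$. I would then split the sum according to $B_{i+1}$: terms with $y \in B_{i+1}$ vanish by definition, while on $A_{i+1}' \setminus B_{i+1}$ the degree is at least two and $u_{i+1}(y) \ge 1$, so each term is at least $u_{i+1}(y)$. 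Both inequalities follow at once.

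For the antichain count, the plan is a swap-and-extend construction. I would fix $B' \subseteq B_{i+1}$ with $|B'| = \min\{k, |B_{i+1}|\}$ and, for each of the $2^{|B'|}$ subsets $Y \subseteq B'$, produce a $(k+1)$-element antichain $C_Y \subseteq A_i \cup B_{i+1}$ satisfying $C_Y \cap A_{i+1} = Y$. Since $A_i$ and $A_{i+1}$ are disjoint, this last property forces distinct $Y$ to yield distinct $C_Y$, giving the desired $2^{\min\{k, |B_{i+1}|\}}$ antichains. The set $C_Y$ itself would consist of $Y$ together with any $k+1 - |Y|$ elements of $A_i \setminus N_{G_i}(Y)$; there is enough room because each $y \in B_{i+1}$ has a unique $G_i$-neighbor, so $|N_{G_i}(Y)| \le |Y|$ and hence $|A_i \setminus N_{G_i}(Y)| \ge |A_i| - |Y| \ge k+1-|Y|$.

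The only step that requires genuine verification — and thus the main (and rather mild) obstacle — is checking that $(A_i \setminus N_{G_i}(Y)) \cup Y$ really is an antichain. Each piece is an antichain individually, so only cross-comparabilities can break the property. An upward relation $x \le y$ with $x \in A_i$ and $y \in A_{i+1}$ is a $G_i$-edge by definition and has been removed by excising $N_{G_i}(Y)$ from $A_i$; a downward relation $y \le x$ with $y \in A_{i+1}$ and $x \in A_i$ cannot occur, since it would extend a chain of length $i+1$ terminating at $y$ to one of length $i+2$ terminating at $x$, contradicting the very characterization of $A_i$ in the canonical decomposition. Once this is checked, the lemma is in hand.
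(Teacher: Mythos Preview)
Your proof is correct and follows essentially the same approach as the paper: the same double-counting identity $\Sigma_i=\sum_{y\in A_{i+1}'}\deg_{G_i}(y)\,u_{i+1}(y)$ for the inequality, and the same swap construction $Y\cup(A_i\setminus N_{G_i}(Y))$ for the antichain count. Your version is in fact more explicit than the paper's in verifying that $(A_i\setminus N_{G_i}(Y))\cup Y$ is an antichain, and your device of fixing a $\min\{k,|B_{i+1}|\}$-subset $B'$ and ranging over all its subsets is a clean way to arrive at exactly $2^{\min\{k,|B_{i+1}|\}}$ distinct antichains.
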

\begin{proof}
  Note that for any $Y \subseteq B_{i+1}$, the set $Y \cup (A_i \setminus N_{G_i}(Y))$ is an antichain with at least $|A_i|$ elements. Since $|A_i| \ge k+1$, each of these antichains that additionally satisfies $|Y| \le k+1$ contains a $(k+1)$-element subset $L$ such that $L \cap B_{i+1} = Y$. This proves the first assertion of the lemma. The second assertion holds since
\[
\Sigma_i = \sum_{xy \in G_i} u_{i+1}(y) = \sum_{y \in A_{i+1}'} u_{i+1}(y) \deg_{G_i}(y) \ge \sum_{y \in A_{i+1}'} u_{i+1}(y) + \sum_{i \in A_{i+1}' \setminus B_{i+1}} u_{i+1}(y).\qedhere
\]
\end{proof}

\subsubsection{The first lower bound on $\min_i |A_i'|$}

In view of Lemma~\ref{lemma:F}, we shall aim at proving a lower bound on the minimum size of $A_{i+1}'$. We first derive a somewhat weak bound on $\min_i |A_i'|$ from Corollary~\ref{cor:large-surplus}.

\begin{claim}
  Either $h_k(P) > m_k(\tau_{k,n})$ or $|A_i'| \ge k/3$ for all $i \in [k+\ell]$, possibly after substituting $P^*$ for $P$.
\end{claim}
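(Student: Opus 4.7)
The plan is to show that if $|A_{i_0}'| < k/3$ for some index $i_0$ in the canonical decomposition of $P$, then either $h_k(P) > m_k(\tau_{k,n})$ directly, or, after substituting $P^*$ for $P$, all sizes $|A_j'^*|$ in the new decomposition are at least $k/3$. The strategy is to delete the small set $A_{i_0}'$ to drop the height while keeping almost all elements, which inflates the $k$-surplus of the resulting subposet, and then to apply Corollary~\ref{cor:large-surplus} to count homogenous $(k+1)$-sets.

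The starting observation is that any chain $L$ of length $k+\ell$ in $P$ meets each antichain $A_j$ in exactly one element: the pigeonhole is tight because $|L| = k+\ell$ equals the number of antichains and each $A_j$ is an antichain. That element necessarily belongs to $A_j'$, since it extends to a longest chain. In particular every chain of length $k+\ell$ passes through $A_{i_0}'$, so $Q := P \setminus A_{i_0}'$ has $\height(Q) \le k+\ell-1$. Combined with $|Q| > n - k/3$ and $n = k(k+\ell) + q$ from~\eqref{eq:n-q-k-ell}, this gives
\begin{align*}
s_k(Q) \;=\; |Q| - k \cdot \height(Q) \;\ge\; (n - k/3) - k(k+\ell-1) \;=\; q + \frac{2k}{3} \;>\; \frac{2k}{3}.
\end{align*}
I would then invoke Corollary~\ref{cor:large-surplus} with, say, $t = \lfloor 2k/9 \rfloor$ (which satisfies $t \le k/2$ and $3t \le s_k(Q)$), applied to $Q$ if $\height(Q) \ge \width(Q)$ and to $Q^*$ otherwise. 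In the latter case one needs $s_k(Q^*) = |Q| - k\width(Q) \ge 3t$; since $\width(Q) \le \width(P) \le k+\ell$, this certainly holds whenever $\width(Q) \le k+\ell-1$, where the same computation yields $s_k(Q^*) \ge q + 2k/3$. Either way the corollary produces $h_k(P) \ge h_k(Q) \ge 2^{\sqrt{\lfloor 2k/9 \rfloor} - 1}$, which for $k$ sufficiently large dwarfs the bound $m_k(\tau_{k,n}) < 2^{\sqrt{k}/4}$ from~\eqref{eq:mk-exp-sqrt-k}.

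The only remaining possibility is the residual case $\width(Q) = k+\ell$, which forces $\width(P) = k+\ell = \height(P)$. In this \emph{balanced} configuration the substitution $P \mapsto P^*$ preserves the standing hypothesis $\height(P) \ge \width(P)$, so one may pass to $P^*$ and work with its canonical decomposition; this is exactly the substitution allowed by the claim. If $|A_j'^*| \ge k/3$ holds for every $j$ there, the claim is established after substitution, and otherwise one reruns the above analysis for $P^*$. The main obstacle is the \emph{doubly balanced} situation, in which the argument for $P^*$ also hits the residual subcase, so that neither $Q$ nor the analogous $P^*$-subposet has enough surplus for Corollary~\ref{cor:large-surplus}. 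Resolving it seems to require a direct structural count that leverages the simultaneous presence, forced by both residual subcases, of an antichain of size $k+\ell$ in $Q$ together with a chain of length $k+\ell$ in $P$ disjoint from $A_{j_0}'^*$; combining these with the dense background structure of $P$ should produce enough additional homogenous $(k+1)$-sets to violate $h_k(P) \le m_k(\tau_{k,n})$.
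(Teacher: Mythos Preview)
Your overall approach matches the paper's: delete the small set $A_{i_0}'$ to drop the height, check that the surplus jumps above $2k/3$, and invoke Corollary~\ref{cor:large-surplus}. Your treatment of the case $\width(Q) \le k+\ell-1$ is fine and essentially equivalent to what the paper does when $\width(P) < \height(P)$.

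The gap is precisely the ``doubly balanced'' case, which you flag but do not resolve. Your suggested fix --- a direct structural count combining a long antichain in $Q$ with a long chain disjoint from $(A_{j_0}^*)'$ --- is vague and unnecessary. The paper's resolution is much simpler and stays entirely within the surplus framework you have already set up: if $\width(P) = \height(P) = k+\ell$ and \emph{both} $P$ and $P^*$ have a small level, say $|A_{i_0}'| < k/3$ and $|(A_{j_0}^*)'| < k/3$, then delete \emph{both} sets at once. Set $P' = P \setminus \big(A_{i_0}' \cup (A_{j_0}^*)'\big)$. Removing $A_{i_0}'$ kills every chain of length $k+\ell$, so $\height(P') \le k+\ell-1$; removing $(A_{j_0}^*)'$ kills every antichain of size $k+\ell$, so $\width(P') \le k+\ell-1$. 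Now let $P''$ be whichever of $P'$, $(P')^*$ has $\height \ge \width$; either way $\height(P'') \le k+\ell-1$, and
\[
s_k(P'') \;=\; |P'| - k\cdot\height(P'') \;\ge\; n - \tfrac{2k}{3} - k(k+\ell-1) \;=\; q + \tfrac{k}{3} \;\ge\; \tfrac{k}{3}.
\]
Corollary~\ref{cor:large-surplus} with $t = k/9$ then gives $h_k(P) \ge h_k(P'') \ge 2^{\sqrt{k}/3 - 1} > m_k(\tau_{k,n})$ by~\eqref{eq:mk-exp-sqrt-k}. So the missing idea is simply to perform the two deletions simultaneously rather than sequentially; this sidesteps the obstruction you ran into when the width of $Q$ failed to drop.
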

\begin{proof}
  Suppose that $|A_i'| < k/3$ for some $i$. If $\width(P) < \height(P)$, then we let $P' = P \setminus A_i'$ and note that $\height(P') = \height(P) - 1 = k+\ell-1$ as every chain of maximum length in $P$ contains one element of $A_i'$. Thus
\[
s_k(P') = |P'| - k \cdot \height(P') \ge k(k+\ell) - |A_i'| - k(k+\ell-1) \ge 2k/3.
\]
Since $\width(P') \le \width(P) \le \height(P) - 1 = \height(P')$, we may apply Corollary~\ref{cor:large-surplus} with $t = 2k/9$ to $P'$ and conclude that, recalling~\eqref{eq:mk-exp-sqrt-k},
\[
h_k(P) \ge h_k(P') \ge 2^{\sqrt{2k}/3-1} > m_k(\tau_{k,n}).
\]
If $\width(P) = \height(P)$, then we let $(A_i^*)_{i=1}^{k+\ell}$ be the canonical decomposition of $P^*$ into antichains. Now, if $|(A_j^*)'| \ge k/3$ for all $j$, then we work with $P^*$ instead of $P$. Otherwise, if $|(A_j^*)'| < k/3$ for some $j$, then we let $P' = P \setminus (A_i' \cup (A_j^*)')$ and note that $\height(P') < \height(P)$ and $\width(P') < \width(P)$. Consequently, letting $P'' = P'$ or $P'' = (P')^*$ so that $\width(P'') \le \height(P'')$, we have
\[
s_k(P'') = |P''| - k \cdot \height(P'') \ge k(k+\ell) - |A_i'| - |(A_j^*)'| - k(k+\ell-1) \ge k/3.
\]
Now, we may again apply Corollary~\ref{cor:large-surplus} with $t = k/9$ to $P''$ to conclude that, again recalling~\eqref{eq:mk-exp-sqrt-k},
\[
h_k(P) \ge h_k(P'') \ge 2^{\sqrt{k}/3 - 1} > m_k(\tau_{k,n}).\qedhere
\]
\end{proof}

\subsubsection{Posets with large $F$}

For the remainder of the proof, we may and shall assume that $|A_i'| \ge k/3$ for all $i \in [k+\ell]$. Together with Lemma~\ref{lemma:F}, this bound already allows us to deal with the case $|F| \ge 40 \log_2 k$. The~crucial observation here is the following.

\begin{obs}
  \label{obs:chains-per-element}
  If some element of $P$ belongs to more than $2k/\ell$ chains of length $k+\ell$, then $h_k(P) > m_k(\tau_{k,n})$.
\end{obs}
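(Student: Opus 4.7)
The plan is a direct application of Corollary~\ref{cor:signatures}(\ref{item:signatures-local}) to the distinguished element $y$, followed by the inductive step already spelled out in~\eqref{eq:induction}. Recall the standing reduction: if some element of $P$ is contained in more than $\binom{k+\ell}{k}$ homogenous $(k+1)$-sets, then the inductive hypothesis applied to $P\setminus\{y\}$, together with~\eqref{eq:mkn-mknm}, gives $h_k(P) > m_k(\tau_{k,n})$. So the observation will follow once I show that the hypothesised element $y$ lies in more than $\binom{k+\ell}{k}$ chains of length $k+1$.

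Assume $y \in P$ lies in more than $2k/\ell$ chains of length $h(P) = k+\ell$, and set $M = \lfloor 2k/\ell \rfloor + 1$. Then $M > 2k/\ell$ and $M$ does not exceed the number of such chains. Put $m = \log_2 M + 1$. Since we are in the subsection $n > k^2 + k$ we have $\ell \ge 1$, so $M \le 2k+1$, and hence $m \le \log_2(2k+1) + 1 \le k/4$ for all sufficiently large $k$. Consequently Corollary~\ref{cor:signatures}(\ref{item:signatures-local}) applies and guarantees at least
\[
\exp\!\left(-\frac{2(\ell-1)m}{k}\right)\cdot M\binom{k+\ell}{k+1}
\]
chains of length $k+1$ passing through $y$.

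It remains to check that this quantity exceeds $\binom{k+\ell}{k}$. Using $\binom{k+\ell}{k+1} = \tfrac{\ell}{k+1}\binom{k+\ell}{k}$ and $M > 2k/\ell$, the above count strictly exceeds
\[
\exp\!\left(-\frac{2(\ell-1)m}{k}\right)\cdot \frac{2k}{k+1}\binom{k+\ell}{k}.
\]
The exponent satisfies $2(\ell-1)m/k = O(\ell\log k/k) = O(1/\sqrt{k}) = o(1)$, by the standing hypothesis $\ell \le c\sqrt{k}/\log_2 k$. Hence the exponential factor is $1-o(1)$, and the displayed expression is at least (say) $\tfrac{3k}{2(k+1)}\binom{k+\ell}{k} > \binom{k+\ell}{k}$ once $k$ is large enough, as required.

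I do not expect a substantive obstacle. The observation is essentially a clean packaging of Corollary~\ref{cor:signatures}(\ref{item:signatures-local}); the only bookkeeping is to arrange that $M = \Theta(k/\ell)$ is small enough to keep $m \le k/4$ (so that the corollary applies) and to keep the ``signature-loss'' factor $\exp(-2(\ell-1)m/k)$ essentially equal to $1$. Both are comfortable consequences of the hypothesis $\ell \le c\sqrt{k}/\log_2 k$. The threshold $2k/\ell$ in the statement is calibrated precisely so that the resulting chain-count through $y$ barely exceeds $\binom{k+\ell}{k}$, which is exactly what the induction needs via~\eqref{eq:mkn-mknm}.
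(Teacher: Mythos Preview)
Your proof is correct and matches the paper's own argument essentially line for line: both apply Corollary~\ref{cor:signatures}(\ref{item:signatures-local}) with $M$ around $2k/\ell$, convert $\binom{k+\ell}{k+1}$ to $\frac{\ell}{k+1}\binom{k+\ell}{k}$, and observe that the exponential loss factor is $1-o(1)$ so that the count exceeds $\binom{k+\ell}{k}$, triggering the inductive step via~\eqref{eq:mkn-mknm}. Your extra care in taking $M=\lfloor 2k/\ell\rfloor+1$ and checking $m\le k/4$ explicitly is a harmless refinement of the paper's slightly looser bookkeeping.
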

\begin{proof}
  Let $M = 2k/\ell$ and suppose that some $y \in P$ is contained in $M$ different chains of length $k+\ell$. We shall show that this implies that $y$ belongs to more than $\binom{k+\ell}{k}$ chains of length $k+1$, which, by the inductive assumption, implies that $h_k(P) > m_k(\tau_{k,n})$, see~\eqref{eq:mkn-mknm}. This follows easily from Corollary~\ref{cor:signatures}~(\ref{item:signatures-local}) as, letting $m = \log_2 M + 1$, since $\ell, m \ll \sqrt{k}$, we have that
  \[
  \exp\left(-\frac{2 (\ell-1) m}{k}\right) M \binom{k+\ell}{k+1} \ge \frac{3k}{2\ell} \binom{k+\ell}{k+1} = \frac{3k}{2(k+1)} \binom{k+\ell}{k}.\qedhere
  \]
\end{proof}

\begin{claim}
  \label{claim:F-size}
  If $|F| \ge 40\log_2 k$, then $h_k(P) > m_k(\tau_{k,n})$.
\end{claim}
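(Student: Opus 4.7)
The plan is to split on whether some $|B_{i+1}|$ with $i \in F \cap [k+\ell-1]$ is ``large'', and to combine Lemma~\ref{lemma:F} with Observation~\ref{obs:chains-per-element}. I first assume, via Observation~\ref{obs:chains-per-element}, that no element of $P$ lies on more than $2k/\ell$ chains of length $k+\ell$ (else we are already done); this yields the uniform bound $u_{i+1}(y) \le 2k/\ell$ for all $y$ and $i$. Set $d := \lceil \log_2 m_k(\tau_{k,n})\rceil + 1$, so that $2^d > m_k(\tau_{k,n})$; by~\eqref{eq:mktau-k-to-ell}, $d \le 2\ell \log_2 k + O(1)$, which in our regime is $O(\sqrt{k}) \ll k$.

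If some $i \in F \cap [k+\ell-1]$ has $|B_{i+1}| \ge d$, Lemma~\ref{lemma:F} already produces more than $m_k(\tau_{k,n})$ antichains of size $k+1$ inside $A_i \cup B_{i+1}$, and we are done. So the real work is the case when $|B_{i+1}| < d$ for every such~$i$. Here, Lemma~\ref{lemma:F} together with the bound $|A_{i+1}'| \ge k/3$ from the preceding claim yields an additive estimate $\Sigma_i \ge \Sigma_{i+1} + k/4$, while the identity
\[
  \Sigma_i = \sum_{y \in A_{i+1}'} \deg_{G_i}(y)\, u_{i+1}(y) \ge 2 \Sigma_{i+1} - \sum_{y \in B_{i+1}} u_{i+1}(y) \ge 2 \Sigma_{i+1} - \gamma,
\]
with $\gamma := d \cdot (2k/\ell) = O(k \log k)$, provides a multiplicative one.

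I then iterate these bounds in decreasing $i$ through the $t' \ge 40 \log_2 k - 1$ indices of $F \cap [k+\ell-1]$, starting from $\Sigma_{i_{t'}+1} \ge |A_{i_{t'}+1}'| \ge k/3$. The additive bound dominates while $\Sigma$ is of order $\gamma$ or smaller, lifting $\Sigma$ by $k/4$ per step; after $O(\log k)$ additive steps, $\Sigma$ exceeds $\gamma$ by a margin $\Omega(k)$. Thereafter, rewriting the multiplicative bound as $\Sigma_i - \gamma \ge 2(\Sigma_{i+1} - \gamma)$ shows that the excess doubles at every further step of $F$. With at least $20 \log_2 k$ doubling steps remaining, this yields $\Sigma_1 \ge k^{\Omega(1)}$, which by double counting forces some element of $P$ to lie on more than $(k+\ell)\Sigma_1/n \gg 2k/\ell$ chains of length $k+\ell$; this contradicts the standing assumption, and Observation~\ref{obs:chains-per-element} closes the argument.

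The main obstacle is obtaining \emph{multiplicative} (rather than merely additive) growth of $\Sigma_i$: the additive bound alone gives only $\Sigma_1 = O(k \log k)$, which in our regime $\ell \le c \sqrt{k}/\log_2 k$ is too weak to beat the double-counting ceiling $\Sigma_1 \le 2kn/(\ell(k+\ell)) = O(k^2/\ell)$. The factor-of-two jump at each step of $F$ requires simultaneously that $B_{i+1}$ be small in \emph{size} (the hard-case hypothesis) and that each of its vertices contribute little to $\Sigma_{i+1}$ (the uniform bound $u_{i+1}(y) \le 2k/\ell$ furnished by Observation~\ref{obs:chains-per-element}); this interplay of the two ingredients is what makes the argument close.
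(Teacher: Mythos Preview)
Your argument is correct and follows essentially the same approach as the paper. The paper partitions $F\cap[k+\ell-1]$ explicitly into an ``additive'' block $I_1$ of size $\ge 32\log_2 k$ (to push $\Sigma$ up to $8k\log_2 k$) and a ``multiplicative'' block $I_2$ of size $\ge 4\log_2 k$, and phrases the multiplicative step as $\Sigma_i \ge \tfrac{3}{2}\Sigma_{i+1}$ (via $\sum_{y\in B_{i+1}} u_{i+1}(y)\le \Sigma_{i+1}/2$) rather than your equivalent $(\Sigma_i-\gamma)\ge 2(\Sigma_{i+1}-\gamma)$; it then averages over $A_1$ instead of over all of $P$ to find a heavily-covered element --- but the ingredients and the overall logic are the same.
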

\begin{proof}
  We may assume that $|B_{i+1}| < 2\ell\log_2k$ for every $i \in F$ as otherwise Lemma~\ref{lemma:F} implies that $h_k(P) \ge k^{2\ell} > m_k(\tau_{k,n})$. We have also assumed that $|A_i'| \ge k/3$ for every $k$ and hence, again by Lemma~\ref{lemma:F},
  \[
  \Sigma_{i+1} - \Sigma_i \ge k/4 \quad \text{ for every $i \in F \cap [k+\ell-1]$}.
  \]
  Partition $F \cap [k+\ell-1]$ into $I_1$ and $I_2$ with $|I_1| \ge 32 \log_2 k$ and $|I_2| \ge 4 \log_2 k$ such that $\min I_1 > \max I_2$. By~\eqref{eq:sum-ui-monotone},
  \[
  \Sigma_{\min I_1} = \Sigma_{k+\ell} + \sum_{i = \min I_1}^{k+\ell-1} (\Sigma_i - \Sigma_{i+1}) \ge |I_1| \cdot k/4 \ge 8 k \log_2 k.
  \]
  Now, consider an arbitrary $i \in I_2$. In accordance with Observation~\ref{obs:chains-per-element}, we may assume that $u_{i+1}(y) \le 2k/\ell$ for every $y \in A_{i+1}$. As $i+1 \le \min I_1$ and $|B_{i+1}| \le 2\ell\log_2k$,
  \[
  \sum_{y \in B_{i+1}} u_{i+1}(y) \le |B_{i+1}| \cdot 2k/\ell \le 4k \log_2 k \le \Sigma_{\min I_1} / 2 \le \Sigma_{i+1}/2,
  \]
  where the final inequality follows from~\eqref{eq:sum-ui-monotone}. Hence, by Lemma~\ref{lemma:F},
  \[
  \Sigma_i \ge \Sigma_{i+1} + \sum_{y \in A_{i+1}' \setminus B_{i+1}} u_{i+1}(y) \ge 3\Sigma_{i+1}/2.
  \]
  It follows that
  \[
  \Sigma_1 \ge (3/2)^{|I_2|} \cdot \Sigma_{\min I_1} \ge k^3,
  \]
  implying that there is an $x \in A_1$ which belongs to more than $2 k / \ell$ (actually more than $k^2/2$) chains of length $k+\ell$. Consequently Observation~\ref{obs:chains-per-element} implies that $h_k(P) > m_k(\tau_{k,n})$.
\end{proof}

\subsubsection{A sufficient condition on $\Sigma_1$}

For the remainder of the proof, we shall therefore assume that $|F| \le 40\log_2k$ and, as a consequence of Observation~\ref{obs:F-size}, that $q \le 40\ell\log_2k$. In view of Corollary~\ref{cor:signatures}~(\ref{item:signatures-global}), in order to conclude that $h_k(P) > m_k(\tau_{k,n})$, it is enough to provide a sufficiently strong lower bound on $\Sigma_1$, the number of chains of length $k+\ell$ in $P$. To this end, define
\begin{equation}
  \label{eq:S}
  S = \left(1 + \frac{q}{\ell}\right)k + 50\sqrt{k}\log_2k
\end{equation}
and note that our assumption on $q$ implies that $S \le 42k\log_2k$.

\begin{obs}
  \label{obs:Sigma-1}
  If $\Sigma_1 \ge S$, then $h_k(P) > m_k(\tau_{k,n})$.
\end{obs}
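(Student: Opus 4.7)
The plan is to apply Corollary~\ref{cor:signatures}(\ref{item:signatures-global}) with $M := \lceil S \rceil$, converting the hypothesis $\Sigma_1 \ge S$ on chains of maximum length into a lower bound on chains of length $k+1$, and then to compare with
\[
m_k(\tau_{k,n}) \;=\; \binom{k+\ell}{k+1}\left(k + \frac{(k+1)q}{\ell}\right),
\]
where I have rewritten~\eqref{eq:mktau} using $\binom{k+\ell}{k} = \tfrac{k+1}{\ell}\binom{k+\ell}{k+1}$.

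First I check the corollary's hypothesis $m := \log_2 M + 1 \le k/4$. Under the running assumptions $q \le 40\ell\log_2 k$ and $\ell \le c\sqrt{k}/\log_2 k$, the definition of $S$ yields $S \le 42 k\log_2 k$, so $m = O(\log k)$ and the hypothesis is satisfied for large $k$. The corollary then gives
\[
h_k(P) \;\ge\; M\binom{k+\ell}{k+1}\exp\!\left(-\frac{2(\ell-1)m}{k}\right),
\]
which, after cancelling $\binom{k+\ell}{k+1}$, reduces the task to showing
\[
M\exp\!\left(-\frac{2(\ell-1)m}{k}\right) \;>\; k + \frac{(k+1)q}{\ell}.
\]

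For the final comparison, I use $e^{-x}\ge 1-x$ together with $m \le 2\log_2 k$ and $\ell \le c\sqrt{k}/\log_2 k$ to bound the exponential factor below by $1 - 4c/\sqrt{k}$. The multiplicative loss is then at most $4cS/\sqrt{k} \le 168c\sqrt{k}\log_2 k < \sqrt{k}\log_2 k$ (here $c = 1/300$), while the additive slack
\[
S - \left(k + \tfrac{(k+1)q}{\ell}\right) \;=\; 50\sqrt{k}\log_2 k - q/\ell \;\ge\; 49\sqrt{k}\log_2 k
\]
comfortably dominates it, using $q/\ell \le 40\log_2 k$. The step is essentially computational; the only subtlety is the careful tuning of the constants $50$ in the definition of $S$ and $c = 1/300$ in the bound on $\ell$, which together ensure that the multiplicative error introduced by the signature argument (an absolute loss of order $\sqrt{k}\log_2 k$) never overwhelms the built-in slack of $50\sqrt{k}\log_2 k$ that $S$ enjoys over $k + (k+1)q/\ell$.
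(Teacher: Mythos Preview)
Your proof is correct and follows essentially the same route as the paper: apply Corollary~\ref{cor:signatures}(\ref{item:signatures-global}) with $M$ on the order of $S$, rewrite $m_k(\tau_{k,n})$ as a multiple of $\binom{k+\ell}{k+1}$, and check that the exponential loss (of order $c\sqrt{k}\log_2 k$ after using $\ell \le c\sqrt{k}/\log_2 k$ and $m = O(\log k)$) is dominated by the $50\sqrt{k}\log_2 k$ slack built into $S$. The paper packages the same computation slightly differently, first verifying $S \ge (1+1/\sqrt{k})(1+q/\ell)(k+1)$ and then absorbing the exponential factor into the $(1+1/\sqrt{k})$, but the substance is identical.
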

\begin{proof}
  Since we have assumed that $q \le 40\ell\log_2k$, then
  \[
  S \ge \left(1+\frac{1}{\sqrt{k}}\right) \left(1+\frac{q}{\ell}\right)(k+1).
  \]
  As $\binom{k+\ell}{k+1} = \frac{\ell}{k+1}\binom{k+\ell}{k}$, it now follows from Corollary~\ref{cor:signatures}~(\ref{item:signatures-global}) that
  \[
  \begin{split}
    h_k(P) & \ge \exp\left(-\frac{3\ell\log_2k}{k}\right)\left(1+\frac{1}{\sqrt{k}}\right)\left(1+\frac{q}{\ell}\right)(k+1) \binom{k+\ell}{k+1} \\
    & \ge \left(1 - \frac{1}{2\sqrt{k}}\right)\left(1 + \frac{1}{\sqrt{k}}\right) (\ell + q) \binom{k+\ell}{k} > (\ell + q) \binom{k+\ell}{k} \ge m_k(\tau_{k,n}),
  \end{split}
  \]
  where the second inequality holds since $\ell \le \sqrt{k} / (300\log_2 k)$ and the last inequality is~\eqref{eq:mktau-k-to-ell}.
\end{proof}

\subsubsection{The second lower bound on $\min_i |A_i'|$}

We shall now focus on proving the following strong lower bound on $\min_i |A_i'|$.

\begin{claim}
  \label{claim:Ap-2nd}
  Either $h_k(P) > m_k(\tau_{k,n})$ or $|A_i'| \ge k - 240 \ell \log_2 k$ for all $i \in [k+\ell]$.
\end{claim}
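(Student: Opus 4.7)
I argue by contrapositive: suppose there is some index $j \in [k+\ell]$ with $|A_j'| < k - 240\ell\log_2 k$; the goal is to exhibit strictly more than $m_k(\tau_{k,n})$ homogenous $(k+1)$-sets in $P$. The principal move is to remove the antichain $A_j'$ from $P$. Because every chain of length $k+\ell$ in $P$ meets $A_j'$ in exactly one element, the subposet $P' := P \setminus A_j'$ has height $k+\ell-1$, so its $k$-surplus satisfies
\[
s_k(P') \ge n - |A_j'| - k(k+\ell-1) = q + k - |A_j'| > q + 240\ell\log_2 k.
\]

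If $\height(P') \ge \width(P')$, then Corollary~\ref{cor:large-surplus} applied with $t$ of order $\ell\log_2 k$ already yields $h_k(P') \ge 2^{\sqrt{80\ell\log_2 k}\,-1}$, which alone exceeds $m_k(\tau_{k,n}) < k^{2\ell}$ in the regime where $\ell\log_2 k$ is at most a modest constant. If instead $\width(P') > \height(P') = k+\ell-1$, then, mirroring the second case of the proof of the first lower bound on $\min_i |A_i'|$, I additionally delete an analogous antichain $(A_{j^*}^*)'$ from $P^*$; since the first lower bound guarantees $|(A_{j^*}^*)'| \ge k/3$, the result is a subposet of height and width both at most $k+\ell-1$ that still retains $\Omega(\ell\log_2 k)$ of the original surplus, and Corollary~\ref{cor:large-surplus} applies to it.

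For the complementary regime where $\ell\log_2 k$ is too large for the surplus bound alone to dominate $k^{2\ell}$, I combine the pruned surplus argument with the $\Sigma_1$-growth mechanism underlying Claim~\ref{claim:F-size}. Under the running assumptions $|F| \le 40\log_2 k$, $q \le 40\ell\log_2 k$, and $|A_i'| \ge k/3$ for every $i$, a dichotomy on the sets $B_{i+1}$ with $i \in F$ finishes the argument: either some $|B_{i+1}| \ge 2\ell\log_2 k$, in which case the first conclusion of Lemma~\ref{lemma:F} already produces $\ge k^{2\ell} > m_k(\tau_{k,n})$ antichains of size $k+1$; or every such $|B_{i+1}|$ is below $2\ell\log_2 k$, in which case the second conclusion of Lemma~\ref{lemma:F}, iterated over $F$ with the upper-indices-are-additive/lower-indices-are-multiplicative split of Claim~\ref{claim:F-size} and with the large surplus of $P'$ used to boost the base value at $\Sigma_{k+\ell-1}$, drives $\Sigma_1$ past the threshold $S$, whereupon Observation~\ref{obs:Sigma-1} concludes.

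The main obstacle is the quantitative calibration behind the constant $240$: it must exceed both the squared Kruskal--Katona-type loss (of order $(\ell\log_2 k)^2$) incurred when translating maximum chains into chains of length $k+1$ through Corollary~\ref{cor:signatures}, and the cumulative per-level error $(|F| + q/\ell)\cdot O(\ell\log_2 k)$ produced when propagating the increments $\Sigma_i - \Sigma_{i+1} \ge |A_{i+1}'| - |B_{i+1}|$ across the at most $40\log_2 k$ levels of $F$. Verifying that the buffer $50\sqrt{k}\log_2 k$ built into $S$ in Observation~\ref{obs:Sigma-1} comfortably absorbs both contributions, for all admissible values of $\ell$ and $q$, is the main bookkeeping to perform.
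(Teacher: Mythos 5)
Your opening move is sound: if $|A_j'| < k - 240\ell\log_2 k$ for some $j$, then since every maximum-length chain meets $A_j'$, removing $A_j'$ yields a poset $P' = P\setminus A_j'$ of height $k+\ell-1$ with $k$-surplus $s_k(P') = q + k - |A_j'| > 240\ell\log_2 k$. But the argument breaks down at the next step. The surplus you obtain is only of order $\ell\log_2 k$, and Corollary~\ref{cor:large-surplus} with $t = \Theta(\ell\log_2 k)$ produces only about $2^{\sqrt{\ell\log_2 k}}$ homogenous $(k+1)$-sets. Since $\ell \le \sqrt{k}/(300\log_2 k)$, this quantity is $2^{O(k^{1/4})}$ in general and merely $2^{O(\sqrt{\log k})}$ when $\ell$ is bounded; meanwhile, $m_k(\tau_{k,n}) \ge 2k+1$ (with equality at $\ell=1$, $q=1$). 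For every fixed $\ell$ and all sufficiently large $k$, we have $2^{\sqrt{80\ell\log_2 k}} \ll k < m_k(\tau_{k,n})$, so the regime in which the surplus bound alone suffices is \emph{empty}. Your first branch therefore never fires, and contrary to what you say, the calibration of the constant $240$ cannot be the issue, because no constant in front of $\ell\log_2 k$ rescues an estimate that is subpolynomial in $k$.

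The second branch is where the real work would have to happen, but it is not actually an argument. Knowing that $s_k(P')$ is large does not give any lower bound on $\Sigma_{k+\ell-1}$ or on any $\Sigma_i$ in $P$: the surplus controls only the number of homogenous $(k+1)$-sets in $P'$, and a poset can simultaneously have large surplus and very few maximum chains (its surplus being carried by a single oversized antichain). Nor does $|A_j'|$ being small tell you anything, by itself, about $|F|$ or about the increments $\Sigma_i - \Sigma_{i+1}$ for $i\in F$, which are governed by where $|A_i|$ is large, not where $|A_i'|$ is small. The missing idea is the set
\[
F' = \{\,i \in [k+\ell-1] \colon |A_i| - |A_i'| + |A_{i+1}'| \ge k+1\,\},
\]
introduced in~\eqref{eq:Fp}, which records the indices at which $|A_i'|$ drops; together with the telescoping bound~\eqref{eq:Ap-telescope} and Observation~\ref{obs:F-size}, this gives $k-|A_j'| \le (|F|+|F'|)\ell$, so a small $|A_j'|$ forces $|F'|$ large. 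The paper's Lemma~\ref{lemma:Fp} then shows that each $i\in F'$ contributes $\Sigma_i - \Sigma_{i+1} \ge k/4$ (this uses the straddling antichain $(A_i\setminus A_i')\cup A_{i+1}'$, which your proposal never considers), so $|F'| > 200\log_2 k$ already gives $\Sigma_1 \ge 50k\log_2 k \ge S$ and Observation~\ref{obs:Sigma-1} finishes. This is the step that converts ``$|A_j'|$ small'' into ``$\Sigma_1$ large'' via a linear-in-$k$ gain per level, and nothing in your proposal plays that role.
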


Recall from Section~\ref{sec:decomposition} that for each $i \in [k+\ell-1]$, the set $(A_i \setminus A_i') \cup A_{i+1}'$ is an antichain and consequently,
\begin{equation}
  \label{eq:Ai-Aip-Aipp}
  |A_i| - |A_i'| + |A_{i+1}'| \le \width(P) \le k+\ell.
\end{equation}
With foresight, define
\begin{equation}
  \label{eq:Fp}
  F' = \{i \in [k+\ell-1] \colon |A_i| - |A_i'| + |A_{i+1}'| \ge k+1\}.
\end{equation}
Let $J$ be an arbitrary subset of $[k+\ell]$. By~\eqref{eq:Ai-Aip-Aipp} and~\eqref{eq:Fp},
\begin{equation}
  \label{eq:Ap-telescope}
  \sum_{i \in J} (|A_{i+1}'| - |A_i'|) \le \sum_{i \in J \cap F'} (k+\ell-|A_i|) + \sum_{j \in J \setminus F'} (k - |A_i|) \le \sum_{i \in J} (k-|A_i|) + |F'| \cdot \ell.
\end{equation}
Now, fix a $j \in [k+\ell]$ and let $J = \{j, \ldots, k+\ell-1\}$. It follows from \eqref{eq:Ap-telescope} and Observation~\ref{obs:F-size} with $I = J \cup \{k+\ell\}$ that (recalling that $A_{k+\ell}' = A_{k+\ell}$)
\[
k - |A_j'| = k - |A_{k+\ell}'| + \sum_{i \in J} (|A_{i+1}'| - |A_i'|) \le \sum_{i \in J \cup \{k+\ell\}} (k - |A_i|) + |F'| \cdot \ell \le (|F| + |F'|) \cdot \ell.
\]
Therefore, in order to establish Claim~\ref{claim:Ap-2nd}, it suffices to prove that $|F'| > 200\log_2 k$ implies that $h_k(P) > m_k(\tau_{k,n})$. This fact is a fairly straightforward consequence of the following lemma, which one may consider as the `dual' version of Lemma~\ref{lemma:F}.

\begin{lemma}
  \label{lemma:Fp}
  Either $h_k(P) > m_k(\tau_{k,n})$ or $\Sigma_i \ge \Sigma_{i+1} + k/4$ for all $i \in F'$.
\end{lemma}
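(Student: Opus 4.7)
The plan is as follows. Fix $i \in F'$ and set $C := (A_i \setminus A_i') \cup A_{i+1}'$. By the definition of $F'$ and the absence of $G_i$-edges between $A_{i+1}'$ and $A_i \setminus A_i'$ (Section~\ref{sec:decomposition}), $C$ is an antichain with $|C| \ge k+1$; put $s = |C| - (k+1) \ge 0$. I would first observe that the derivation of the second assertion of Lemma~\ref{lemma:F} uses only the identity $\Sigma_i = \sum_{y \in A_{i+1}'} \deg_{G_i}(y) u_{i+1}(y)$ together with $\deg_{G_i}(y) \ge 2$ for $y \in A_{i+1}' \setminus B_{i+1}$ and $u_{i+1}(y) \ge 1$ for $y \in A_{i+1}'$, so it holds for every index $i$ and yields $\Sigma_i - \Sigma_{i+1} \ge |A_{i+1}' \setminus B_{i+1}|$. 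If $|A_{i+1}' \setminus B_{i+1}| \ge k/4$ the desired bound on $\Sigma_i$ is already at hand. For the remainder of the plan I would therefore assume $|A_{i+1}' \setminus B_{i+1}| < k/4$ and aim to produce at least $k^{2\ell} > m_k(\tau_{k,n})$ homogenous $(k+1)$-sets in $P$, cf.~\eqref{eq:mktau-k-to-ell}.

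If $s \ge 2\ell \log_2 k$, then $C$ itself contains $\binom{|C|}{k+1} \ge 2^s \ge k^{2\ell}$ antichains of size $k+1$, which is enough. So I may additionally assume $s < 2\ell \log_2 k$; in particular $|C|$ is very close to $k+1$.

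The core construction is a swap exploiting the fact that $B_{i+1}$ is then large ($|B_{i+1}| > |A_{i+1}'| - k/4 \ge k/12$). Let $H$ denote $G_i$ restricted to $A_i' \times A_{i+1}'$; for $y \in B_{i+1}$ let $x_y$ be its unique $H$-neighbor, and for $x \in A_i'$ with $\deg_H(x) = 1$ let $y_x$ be its unique $H$-neighbor. Put
\[
X^* = \{x \in A_i' \colon \deg_H(x) = 1 \text{ and } y_x \in B_{i+1}\}.
\]
The map $x \mapsto y_x$ injects $X^*$ into $B_{i+1}$, since two members of $X^*$ sharing a common neighbor $y \in B_{i+1}$ would violate the uniqueness of $y$'s predecessor. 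For each $X \subseteq X^*$, setting $Y = \{y_x \colon x \in X\}$, the set
\[
\phi(X) := C \cup X \setminus Y = (A_i \setminus A_i') \cup X \cup (A_{i+1}' \setminus Y)
\]
is an antichain of size exactly $|C|$: each $x \in X$ lies in the antichain $A_i$ alongside $A_i \setminus A_i'$, and its sole $H$-neighbor has been removed. Distinct $X$'s give distinct $\phi(X)$'s (the $A_i$-part determines $X$), so we obtain a family of $2^{|X^*|}$ antichains of size $|C|$. Applying Lemma~\ref{lemma:KK} with $a = |C|$ and $b = k+1$ to this family yields at least $\min\{2^{|X^*|-1}, 2^{k+1}\}$ distinct $(k+1)$-antichains in $P$, and when $|X^*| \ge 2\ell \log_2 k + 1$ this already exceeds $k^{2\ell}$.

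The main obstacle is the remaining regime $|X^*| \le 2\ell \log_2 k$. Here the simple swap supplies too few antichains, so one enriches the construction by allowing further vertices in the ``add'' set $X \subseteq A_i'$: (i) vertices $x$ all of whose $H$-neighbors lie in $B_{i+1}$ but with $|N_H(x)| \ge 2$ (each such $x$ costs $|N_H(x)|-1$ units of the slack $s$ when its whole $H$-neighborhood is added to $Y$), and (ii) vertices $x$ with a unique $H$-neighbor in $A_{i+1}' \setminus B_{i+1}$, chosen so that distinct selected $x$'s have distinct neighbors. Using $\sum_{x \in A_i'} |N_H(x) \cap B_{i+1}| = |B_{i+1}| \ge k/12$ together with the hypothesis $|A_{i+1}' \setminus B_{i+1}| < k/4$, one argues that if $|X^*|$ is too small to supply $k^{2\ell}$ antichains by itself, then at least one of the enhancements (i) or (ii) must produce enough extra swaps that the enlarged family of antichains of size $\ge k+1$, fed into Lemma~\ref{lemma:KK}, yields more than $k^{2\ell}$ distinct $(k+1)$-antichains. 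Carrying out this combinatorial balance, juggling the parameters $|X^*|$, $s$, the sizes of the $\pi$-fibres $\pi^{-1}(x)$ and the edges into $A_{i+1}' \setminus B_{i+1}$, is the delicate part of the proof.
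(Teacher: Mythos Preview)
Your proof is incomplete: the final ``delicate part'' is not carried out, and as stated it is not clear that the balance can be made to work. In the regime $|X^*| \le 2\ell\log_2 k$, $|B_{i+1}|>k/12$, and $s<2\ell\log_2 k$, your enhancement~(i) can contribute at most $s$ additional swaps in total (each costs at least one unit of slack), which is far short of the $2\ell\log_2 k$ you need; and enhancement~(ii) --- vertices $x\in A_i'$ with a unique $H$-neighbor --- is precisely the object that makes the argument work, but you have not shown that there are many of them, nor explained how to combine the various swap types into a single family to which Lemma~\ref{lemma:KK} applies.

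The paper avoids this difficulty by working on the \emph{other side} of the bipartite graph from the outset. Instead of $B_{i+1}\subseteq A_{i+1}'$, it considers
\[
C_i=\{x\in A_i':\deg_{G_i}(x,A_{i+1}')\le 1\}.
\]
For every $X\subseteq C_i$ one has $|N_{G_i}(X)\cap A_{i+1}'|\le|X|$, so $(A_i\setminus A_i')\cup X\cup(A_{i+1}'\setminus N_{G_i}(X))$ is an antichain of size at least $|A_i|-|A_i'|+|A_{i+1}'|\ge k+1$; hence $|C_i|\ge 2\ell\log_2 k$ already yields $h_k(P)\ge k^{2\ell}$. If instead $|C_i|<2\ell\log_2 k$, then, since every $x\in A_i'$ has a $G_i$-neighbor in $A_{i+1}'$, one gets $e_{G_i}(A_i',A_{i+1}')\ge 2|A_i'|-|C_i|$, and therefore
\[
\Sigma_i-\Sigma_{i+1}\ge e_{G_i}(A_i',A_{i+1}')-|A_{i+1}'|\ge |A_i'|-(|A_{i+1}'|-|A_i'|)-|C_i|.
\]
Bounding $|A_{i+1}'|-|A_i'|\le k+\ell-|A_i|\le(|F|+1)\ell\ll k$ via Observation~\ref{obs:F-size} and using $|A_i'|\ge k/3$ gives $\Sigma_i-\Sigma_{i+1}\ge k/4$ directly. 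The point is that your bound $\Sigma_i-\Sigma_{i+1}\ge|A_{i+1}'\setminus B_{i+1}|$ discards the multiplicity coming from high-degree vertices of $A_i'$; switching the viewpoint to $C_i$ recovers it through the edge count, and the dichotomy becomes clean.
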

\begin{proof}
  Fix some $i \in F'$ and let
  \[
  C_i = \{x \in A_i' \colon \deg_{G_i}(x, A_{i+1}') \le 1\}.
  \]
  Observe that for every $X \subseteq C_i$, the set $(A_i \setminus A_i') \cup X \cup (A_{i+1}' \setminus N_{G_i}(X))$ is an antichain with at least $|A_i| - |A_i'| + |A_{i+1}'|$ elements. Each of these antichains that additionally satisfies $|X| \le k+1$ contains a $(k+1)$-element subset $L$ such that $L \cap A_i' = X$. Therefore, if $|C_i| \ge 2\ell\log_2k$, then $h_k(P) \ge k^{2\ell} > m_k(\tau_{k,n})$. On the other hand, if $|C_i| < 2\ell\log_2k$, then
  \[
  \begin{split}
    \Sigma_i & = \sum_{xy \in G_i} u_{i+1}(y) = \sum_{y \in A_{i+1}'} u_{i+1}(y)\deg_{G_i}(y) \ge \Sigma_{i+1} + e_{G_i}(A_i', A_{i+1}') - |A_{i+1}'| \\
    & \ge \Sigma_{i+1} + 2|A_i'| - |C_i| - |A_{i+1}'| \ge \Sigma_{i+1} + |A_i'| - (|A_{i+1}'| - |A_i'|) - 2\ell\log_2k.
  \end{split}
  \]
  By Observation~\ref{obs:F-size} with $I = \{i\}$, we have $k - |A_i| \le |F| \ell$. Therefore, by~\eqref{eq:Ai-Aip-Aipp} and our assumption on $|F|$,
  \[
  |A_{i+1}'| - |A_i'| \le k+\ell-|A_i| \le (|F| + 1) \cdot \ell \ll k.
  \]
  Consequently, as we have assumed that $|A_i'| \ge k/3$, we have $\Sigma_i \ge \Sigma_{i+1} + k/4$.
\end{proof}

\begin{proof}[Proof of Claim~\ref{claim:Ap-2nd}]
  Observe first that Lemma~\ref{lemma:Fp} yields $\Sigma_1 \ge |F'| \cdot k/4$. Indeed, similarly as in the proof of Claim~\ref{claim:F-size},
  \[
  \Sigma_1 = \Sigma_{k+\ell} + \sum_{i=1}^{k+\ell-1} (\Sigma_i - \Sigma_{i+1}) \ge |F'| \cdot k/4.
  \]
  As $q \le 40\ell\log_2k$ by our assumption, if $|F'| > 200\log_2k$, then $\Sigma_1 \ge 50k\log_2k \ge S$, see~\eqref{eq:S}, and consequently $h_k(P) > m_k(\tau_{k,n})$ by Observation~\ref{obs:Sigma-1}.
\end{proof}

\subsubsection{Strengthening Lemmas~\ref{lemma:F} and~\ref{lemma:Fp}}

For the remainder of the proof, we shall assume that $|A_i'| \ge k-240\ell\log_2k$ for every $i$. This assumption will allow us to prove the following strengthening of Lemmas~\ref{lemma:F} and~\ref{lemma:Fp}. Recall the definitions of $F$ from~\eqref{eq:F} and $F'$ from~\eqref{eq:Fp}.

\begin{lemma}
  \label{lemma:Fp-strong}
  Either $h_k(P) > m_k(\tau_{k,n})$ or
  \begin{enumerate}[(i)]
  \item
    \label{item:Fp-strong-one}
    $\Sigma_i \ge \Sigma_{i+1} + k - \sqrt{k}$ for all $i \in F \cup F'$ and
  \item
    \label{item:Fp-strong-two}
    $\Sigma_i \ge \Sigma_{i+1} + 2k - 3\sqrt{k}$ for all $i \in F$ such that $i < \max F'$.
  \end{enumerate}
\end{lemma}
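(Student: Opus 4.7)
The plan is to combine the strengthened bound $|A_j'| \ge k - 240\ell\log_2 k$ from Claim~\ref{claim:Ap-2nd} with the standing hypotheses $|F| \le 40\log_2 k$, $q \le 40\ell\log_2 k$, and $\ell\log_2 k \le \sqrt{k}/300$, and to tighten the two earlier lemmas accordingly. At every level we may assume the `bad' set appearing in the proof of Lemma~\ref{lemma:F} or Lemma~\ref{lemma:Fp} has at most $2\ell\log_2 k$ elements, since otherwise $2^{2\ell\log_2 k+1} > k^{2\ell} \ge m_k(\tau_{k,n})$ by \eqref{eq:mktau-k-to-ell}; specifically, $|B_{i+1}| \le 2\ell\log_2 k$ for $i \in F$ and $|C_i| \le 2\ell\log_2 k$ for $i \in F'$.

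For part~(i), both cases are immediate consequences of the earlier lemmas. If $i \in F$, Lemma~\ref{lemma:F} yields
\[
\Sigma_i - \Sigma_{i+1} \ge |A_{i+1}'| - |B_{i+1}| \ge (k - 240\ell\log_2 k) - 2\ell\log_2 k \ge k - \sqrt{k}.
\]
If $i \in F'$, the proof of Lemma~\ref{lemma:Fp} delivers $\Sigma_i - \Sigma_{i+1} \ge |A_i'| - (|A_{i+1}'| - |A_i'|) - 2\ell\log_2 k$; since \eqref{eq:Ai-Aip-Aipp} and Observation~\ref{obs:F-size} with $I = \{i\}$ give $|A_{i+1}'| - |A_i'| \le k + \ell - |A_i| \le (|F|+1)\ell = O(\ell\log_2 k)$, the right-hand side is again at least $k - \sqrt{k}$.

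Part~(ii) is more delicate, demanding a near-doubling at every $i \in F$ that sits strictly below some $j \in F'$. The starting point is the sharper form of Lemma~\ref{lemma:F},
\[
\Sigma_i - \Sigma_{i+1} \ge \sum_{y \in A_{i+1}' \setminus B_{i+1}} u_{i+1}(y) = \Sigma_{i+1} - \sum_{y \in B_{i+1}} u_{i+1}(y),
\]
so it suffices to prove the two estimates $\Sigma_{i+1} \ge 2k - 2\sqrt{k}$ and $\sum_{y \in B_{i+1}} u_{i+1}(y) \le \sqrt{k}$. The first is routine: pick any $j \in F'$ with $j > i$, which exists by hypothesis; then part~(i) applied at $j$, combined with monotonicity \eqref{eq:sum-ui-monotone} and $\Sigma_{k+\ell} = |A_{k+\ell}| \ge |A_{k+\ell}'| \ge k - 240\ell\log_2 k$, gives $\Sigma_{i+1} \ge \Sigma_j \ge \Sigma_{j+1} + (k - \sqrt{k}) \ge 2k - 2\sqrt{k}$.

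The second estimate is the real obstacle. The crude product bound $|B_{i+1}| \cdot \max_y u_{i+1}(y) \le 2\ell\log_2 k \cdot 2k/\ell = O(k\log k)$ coming from Observation~\ref{obs:chains-per-element} falls far short of $\sqrt{k}$, and bypassing it requires the rigidity of $B_{i+1}$: each $y \in B_{i+1}$ has a unique $G_i$-neighbour $x_y \in A_i$, so every maximum-length chain through $y$ factors through a chain of length $i$ ending at $x_y$, yielding
\[
\sum_{y \in B_{i+1}} u_i^{\downarrow}(x_y)\, u_{i+1}(y) \le \Sigma_1 < S,
\]
where $u_i^{\downarrow}(x)$ denotes the number of chains of length $i$ ending at $x$. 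A symmetric lower bound $u_i^{\downarrow}(x_y) = \Omega(k/\log k)$ for the relevant predecessors---derivable by running Claim~\ref{claim:Ap-2nd} in the dual direction, which is legitimate because $P$ has order dimension at most two and therefore admits a dual $P^*$ in which one may exchange the roles of chains and antichains---combined with $S = O(k\log k)$ then forces $\sum_{y \in B_{i+1}} u_{i+1}(y) = O(\log^2 k) \ll \sqrt{k}$, as required. Setting up this dual lower bound in a form compatible with the canonical antichain decomposition of $P$ is, I expect, the principal technical hurdle.
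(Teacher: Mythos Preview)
Your treatment of part~(i) is correct and essentially identical to the paper's: in both cases the improved lower bound $|A_j'|\ge k-240\ell\log_2k$ is fed back into the inequalities from Lemmas~\ref{lemma:F} and~\ref{lemma:Fp}, and the error terms $O(\ell\log_2k)$ are absorbed into $\sqrt{k}$.

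Part~(ii), however, has a genuine gap. Your strategy reduces the claim to showing $\sum_{y\in B_{i+1}}u_{i+1}(y)\le\sqrt{k}$, and for this you need $u_i^{\downarrow}(x_y)=\Omega(k/\log k)$ for each individual predecessor $x_y$. Running Claim~\ref{claim:Ap-2nd} in the reversed poset $\hat P$ does not give this: that claim only tells you that \emph{many} elements at each level lie on \emph{some} maximum chain (a cardinality statement about $|\hat A_j'|$), not that any \emph{particular} element lies on many of them. There is no mechanism in place preventing a single $y\in B_{i+1}$ from having $u_i^{\downarrow}(x_y)=1$ while $u_{i+1}(y)$ is as large as $2k/\ell\gg\sqrt{k}$, which is all Observation~\ref{obs:chains-per-element} forbids. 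So the ``principal technical hurdle'' you flag is in fact a dead end for this line of attack.

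The paper sidesteps the problem entirely. Rather than controlling $\sum_{y\in B_{i+1}}u_{i+1}(y)$, it bounds $\sum_{y\in A_{i+1}'\setminus B_{i+1}}u_{i+1}(y)$ from below directly, by showing that most summands are at least~$2$. Concretely, it introduces $A_j''=\{x\in A_j:u_j(x)\ge2\}$ and proves that $|A_{i+1}''|\ge k-2\sqrt{k}$ whenever $i<\max F'$. The argument is a second propagation step, parallel to Claim~\ref{claim:Ap-2nd}: one checks $A_j''\supseteq A_j'\setminus C_j$ (so $|A_{f'}''|\ge k-\sqrt{k}$ at $f'=\max F'$), observes that $(A_j\setminus A_j'')\cup A_{j+1}''$ is again an antichain, defines $F''=\{j<f':|A_j|-|A_j''|+|A_{j+1}''|\ge k+1\}$, and shows via the usual telescoping plus Observation~\ref{obs:F-size} that either $|A_{i+1}''|\ge k-2\sqrt{k}$ or $|F''|$ is large enough to force $\Sigma_1\ge S$. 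Once $|A_{i+1}''|\ge k-2\sqrt{k}$ is in hand, Lemma~\ref{lemma:F} gives
\[
\Sigma_i-\Sigma_{i+1}\ge\sum_{y\in A_{i+1}'\setminus B_{i+1}}u_{i+1}(y)\ge |A_{i+1}''|+|A_{i+1}'|-2|B_{i+1}|\ge 2k-3\sqrt{k}.
\]
The moral is that the doubling comes not from a global bound on $\Sigma_{i+1}$ minus a delicate loss on $B_{i+1}$, but from a pointwise lower bound $u_{i+1}(y)\ge2$ holding for almost all of $A_{i+1}'$.
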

\begin{proof}
  Suppose first that $i \in F$. We may assume that $|B_{i+1}| \le 2\ell\log_2k$, as otherwise $h_k(P) \ge k^{2\ell} > m_k(\tau_{k,n})$ by Lemma~\ref{lemma:F}. Consequently,
  \[
  \Sigma_i - \Sigma_{i+1} \ge |A_{i+1}'| - |B_{i+1}| \ge k - 242\ell\log_2k \ge k - \sqrt{k},
  \]
  as $\ell \le \sqrt{k}/(300 \log_2 k)$ and we have assumed that $|A_{i+1}'| \ge k - 240\ell\log_2k$.

  Suppose now that $i \in F'$. As in the proof of Lemma~\ref{lemma:Fp}, let
  \[
  C_i = \{x \in A_i' \colon \deg_{G_i}(x, A_{i+1}') \le 1\}
  \]
  and recall that either $|C_i| < 2\ell\log_2k$ or $h_k(P) \ge k^{2\ell} > m_k(\tau_{k,n})$ and that
  \begin{equation}
    \label{eq:Fp-strong}
    \Sigma_i \ge \Sigma_{i+1} + |A_i'| - (|F|+1) \cdot \ell - |C_i| \ge \Sigma_{i+1} +k - \sqrt{k},
  \end{equation}
  as $\ell \le \sqrt{k}/(300 \log_2 k)$ and we have assumed that $|F| \le 40\log_2k$ and that $|A_i'| \ge k - 240\ell\log_2k$.

  We now turn to proving~(\ref{item:Fp-strong-two}). First, for each $i \in [k+\ell]$, define
  \[
  A_i'' = \{x \in A_i \colon u_i(x) \ge 2\}
  \]
  and observe that $A_i'' \supseteq A_i' \setminus C_i$. Indeed, if $x \in A_i' \setminus C_i$, then by~\eqref{eq:ui-uip},
  \[
  u_i(x) = \sum_{xy \in G_i} u_{i+1}(y) \ge \deg_{G_i}(x,A_{i+1}') \ge 2.
  \]
  Consequently, if $i \in F'$ and $h_k(P) \le m_k(\tau_{k,n})$, then $|A_i''| \ge |A_i'| - |C_i| \ge k - \sqrt{k}$, see~\eqref{eq:Fp-strong}.
  
  Assume now that $F' \neq \emptyset$ and let $f' = \max F'$. We claim that either $h_k(P) > m_k(\tau_{k,n})$ or $|A_{i+1}''| \ge k - 2\sqrt{k}$ for each $i < f'$. To this end, observe first that for each $i \in [k+\ell-1]$, the set $(A_i \setminus A_i'') \cup A_{i+1}''$ is an antichain and therefore $|A_i| - |A_i''| + |A_{i+1}''| \le k + \ell$. Indeed, if $x \in A_i$ and $y \in A_{i+1}$ satisfy $u_i(x) < u_{i+1}(y)$, then~\eqref{eq:ui-uip} implies that $xy \not\in G_i$. With foresight, let
  \[
  F'' = \{i \in [f' - 1] \colon |A_{i+1}''| - |A_i''| + |A_i| \ge k+1\}.
  \]
  Assume that $\min_{i \le f'} |A_i''| < k - 2\sqrt{k}$ and let $i \in [f']$ be the largest index such that $|A_i''| < k - 2\sqrt{k}$. Since, as we have shown above, $|A_{f'}''| \ge k - \sqrt{k}$, then
  \begin{equation}
    \label{eq:Fpp}
    \begin{split}
      \sqrt{k} & < |A_{f'}''| - |A_i''| = \sum_{j = i}^{f'-1} (|A_{i+1}''| - |A_i''|) \le \sum_{j = i}^{f'-1} (k-|A_i|) + |F'' \cap \{i, \ldots, f'-1\}| \cdot \ell \\
      & \le |F| \cdot \ell + |F'' \cap \{i+1, \ldots, f'-1\}| \cdot \ell + \ell,
    \end{split}
  \end{equation}
  where the last inequality follows from Observation~\ref{obs:F-size}. Since $\ell \le \sqrt{k}/(300 \log_2 k)$ and we have assumed that $|F| \le 40\log_2k$, it follows from~\eqref{eq:Fpp} that $|F'' \cap \{i+1, \ldots, f'-1\}| \ge 50 \log_2k$. We claim that this implies that $h_k(P) > m_k(\tau_{k,n})$. To this end, consider some $j \in F''$, let
  \[
  C_j' = \{x \in A_j'' \colon \deg(x, A_{j+1}'') \le 1\},
  \]
  and note that for every $X \subseteq C_j'$, the set $(A_j \setminus A_j'') \cup X \cup (A_{j+1}'' \setminus N_{G_j}(X))$ is an antichain with at least $k+1$ elements. Therefore, if $|C_j''| \ge 2\ell\log_2k$, then $h_k(P) \ge k^{2\ell} > m_k(\tau_{k,n})$. On the other hand, if $|C_j| < 2\ell\log_2k$, then
  \[
  \Sigma_j - \Sigma_{j+1} \ge e_{G_j}(A_j'' + A_{j+1}'') - |A_{j+1}''| \ge 2|A_j''| - (k+\ell) - 2\ell\log_2k.
  \]
  Therefore, if $j > i$ and $j \in F''$, then $\Sigma_j - \Sigma_{j+1} \ge k - o(k)$ and consequently, recalling~\eqref{eq:S},
  \[
  \Sigma_1 \ge |F'' \cap \{i+1, \ldots, f'-1\}| \cdot (k-o(k)) \ge (1-o(1)) \cdot 50k\log_2k \ge S,
  \]
  which, by Observation~\ref{obs:Sigma-1}, yields $h_k(P) > m_k(\tau_{k,n})$.

  Finally, suppose that $i \in F$ and $i < \max F'$. We may now assume that $|A_{i+1}''| \ge k - 2\sqrt{k}$ and therefore, by Lemma~\ref{lemma:F},
  \[
  \Sigma_i - \Sigma_{i+1} \ge \sum_{y \in A_{i+1}' \setminus B_{i+1}} u_{i+1}(y) \ge |A_{i+1}''| + |A_{i+1}'| - |B_{i+1}| \ge 2k - 3\sqrt{k}.\qedhere
  \]
\end{proof}

\subsubsection{Narrowing down to almost extremal posets}

The following observation will further narrow down our search for $P$ with $h_k(P) \le m_k(\tau_{k,n})$.

\begin{obs}
  \label{obs:FFp}
  If $|(F \cup F') \cap [k+\ell-1]| \ge \frac{q+1}{\ell}$, then $h_k(P) > m_k(\tau_{k,n})$.
\end{obs}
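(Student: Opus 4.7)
The plan is to deduce $\Sigma_1 \ge S$, with $S$ as defined in~\eqref{eq:S}, and then invoke Observation~\ref{obs:Sigma-1}. We may assume $h_k(P) \le m_k(\tau_{k,n})$, since otherwise there is nothing to prove. Under this assumption, Lemma~\ref{lemma:Fp-strong}~(i) guarantees $\Sigma_i - \Sigma_{i+1} \ge k - \sqrt{k}$ for every $i \in F \cup F'$. Writing $f = |(F \cup F') \cap [k+\ell-1]|$ and telescoping via~\eqref{eq:sum-ui-monotone} (which contributes non-negatively at all indices outside $F \cup F'$), we obtain
\[
\Sigma_1 \ge \Sigma_{k+\ell} + f(k - \sqrt{k}).
\]
Since $u_{k+\ell} \equiv 1$ on $A_{k+\ell}$, we have $\Sigma_{k+\ell} = |A_{k+\ell}|$, and Claim~\ref{claim:Ap-2nd} combined with $\ell \le \sqrt{k}/(300\log_2 k)$ yields $|A_{k+\ell}| \ge k - 240\ell \log_2 k \ge k - \sqrt{k}$. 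Hence $\Sigma_1 \ge (f+1)(k - \sqrt{k})$.

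Using the hypothesis $f \ge (q+1)/\ell$, hence $f+1 \ge (q+\ell+1)/\ell$, direct expansion gives
\[
\Sigma_1 \ge \left(1 + \frac{q}{\ell}\right) k + \frac{1}{\ell}\bigl(k - (q+\ell+1)\sqrt{k}\bigr).
\]
It remains to check that the second summand exceeds $50\sqrt{k}\log_2 k$. The standing assumptions $|F| \le 40\log_2 k$ (whence $q \le 40\ell\log_2 k$ by Observation~\ref{obs:F-size} applied with $I = \emptyset$) and $\sqrt{k}/\ell \ge 300\log_2 k$ force $(q+\ell+1)/\ell \le 40\log_2 k + 2$, so that
\[
\frac{1}{\ell}\bigl(k - (q+\ell+1)\sqrt{k}\bigr) = \sqrt{k}\left(\frac{\sqrt{k}}{\ell} - \frac{q+\ell+1}{\ell}\right) \ge \sqrt{k}\bigl(260\log_2 k - 2\bigr) \ge 50\sqrt{k}\log_2 k
\]
for $k$ large. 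This yields $\Sigma_1 \ge S$, and Observation~\ref{obs:Sigma-1} concludes the argument.

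Conceptually this is just bookkeeping---all the substantive work sits in Lemma~\ref{lemma:Fp-strong} and Claim~\ref{claim:Ap-2nd}. The one thing to watch is that the bound $(f+1)(k-\sqrt{k})$ is quite tight: the full strength of each of $\ell \le \sqrt{k}/(300\log_2 k)$, $|F| \le 40\log_2 k$, and the refined $|A_i'| \ge k - 240\ell\log_2 k$ is used to make the final numerical inequality go through, and part~(ii) of Lemma~\ref{lemma:Fp-strong} is not needed here.
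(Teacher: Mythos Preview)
Your proof is correct and follows essentially the same approach as the paper's: telescope $\Sigma_1 \ge \Sigma_{k+\ell} + \sum_{i \in (F\cup F')\cap[k+\ell-1]}(\Sigma_i - \Sigma_{i+1})$, bound each summand by $k-\sqrt{k}$ via Lemma~\ref{lemma:Fp-strong}~(\ref{item:Fp-strong-one}), use $\Sigma_{k+\ell} = |A_{k+\ell}'| \ge k-\sqrt{k}$, and then verify $\Sigma_1 \ge S$ from the standing bounds on $q$ and $\ell$ before invoking Observation~\ref{obs:Sigma-1}. The only difference is that you spell out the final numerical check in more detail than the paper does.
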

\begin{proof}
  Recall that $\Sigma_{k+\ell} = |A_{k+\ell}| = |A_{k+\ell}'| \ge k - 240\ell\log_2k \ge k - \sqrt{k}$. By Lemma~\ref{lemma:Fp-strong}~(\ref{item:Fp-strong-one}) and~\eqref{eq:sum-ui-monotone},
  \[
  \Sigma_1 \ge \Sigma_{k+\ell} + \sum_{i \in (F \cup F') \cap [k+\ell-1]} (\Sigma_i - \Sigma_{i+1}) \ge \left(1 + \frac{q+1}{\ell}\right)\left(k-\sqrt{k}\right).
  \]
  Now, since $q \le 40\ell\log_2k$ and $\ell \le \sqrt{k}/(300 \log_2 k)$, then $\Sigma_1 \ge S$ and the conclusion follows from Observation~\ref{obs:Sigma-1}.
\end{proof}

Recall the definition of $F$ from~\eqref{eq:F}. We shall now split into cases depending on whether or not $k + \ell \in F$, that is, whether or not $|A_{k+\ell}| \ge k+1$.

\begin{case}
  \label{case:k-ell-not-F}
  $k + \ell \notin F$.
\end{case}

The assumption that $k+\ell \not\in F$ and Observation~\ref{obs:F-size} imply that
\[
|F \cap [k+\ell-1]| = |F| \ge \left\lceil \frac{q + |\{i \in [k+\ell] \colon |A_i| < k\}|}{\ell} \right\rceil.
\]
By Observation~\ref{obs:FFp}, we may assume that $|A_i| \ge k$ for all $i$, $\ell$ divides $q$, and $|F| = q/\ell$, as otherwise $h_k(P) > m_k(\tau_{k,n})$. This implies that $|A_i| = k+\ell$ for each $i \in F$ and $|A_i| = k$ otherwise.

\begin{subcase}
  $F' \neq \emptyset$ and $\max F' > \min F$.
\end{subcase}

Let $j = \max F'$ and let $i$ be the largest element of $F$ that is smaller than $j$. By Lemma~\ref{lemma:Fp-strong}~(\ref{item:Fp-strong-two}),
\[
\Sigma_i - \Sigma_{i+1} \ge 2k - 3\sqrt{k}.
\]
Consequently, using Lemma~\ref{lemma:Fp-strong} as in the proof of Observation~\ref{obs:FFp},
\[
\Sigma_1 \ge \Sigma_{k+\ell} + \sum_{j \in F \setminus \{i\}} (\Sigma_j - \Sigma_{j+1}) + \Sigma_i - \Sigma_{i+1} \ge \left(1 + \frac{q}{\ell}\right)\left(k - \sqrt{k}\right) + k - 2\sqrt{k} \ge S,
\]
which yields $h_k(P) > m_k(\tau_{k,n})$.

\begin{subcase}
  $F' = \emptyset$ or $\max F' \le \min F$.
\end{subcase}

If $F' \neq \emptyset$, then we may also assume that $\min F' \ge \min F$. Indeed, otherwise
\[
|(F \cup F') \cap [k+\ell-1]| \ge |F| + 1 \ge \frac{q}{\ell}+1
\]
and hence $h_k(P) > m_k(\tau_{k,n})$ by Observation~\ref{obs:FFp}. Hence $\min F' = \max F' = \min F$.

We first claim that $|A_i'| \ge k$ for every $i$. Suppose not and let $i$ be the largest index for which $|A_i'| < k$ and note that $i < k+\ell$ as $A_{k+\ell}' = A_{k+\ell}$ and we have assumed that $|A_j| \ge k$ for each $j \in [k+\ell]$. Consequently,
\[
|A_i| + |A_{i+1}'| - |A_i'| \ge |A_i| + k - |A_i'| > |A_i| \ge k,
\]
implying that $i \in F'$ and thus $i = \min F$. But this is impossible as $|A_i| = k+\ell$ and hence $(A_i \setminus A_i') \cup A_{i+1}'$ would be an antichain with more than $k+\ell$ elements.

We now claim that $\max F \le \min F + 1$, that is, that $|F|=1$ or $F = \{f, f+1\}$ for some $f \in [k+\ell-1]$. To see this, note that if $\max F \in F'$, then our assumption implies that $\max F = \min F$, that is, $|F| = 1$. Otherwise, if $\max F \not\in F'$, then
\[
|A_{\max F}| - |A_{\max F}'| + |A_{\max F+1}'| \le k
\]
and hence, as $|A_{\max F+1}'| \ge k$, we have $|A_{\max F}'| = |A_{\max F}| = k+\ell$. Consequently, either $\max F = 1$ or $\max F - 1 \in F'$ and therefore $\max F - 1 = \min F$ by our assumption.

Finally, let $f = \min F$. If $f > 1$ and $|A_f'| > k$, then $f - 1 \in F'$, contradicting our assumption. We may thus assume that $f = 1$ or $|A_f'| = k$. Consequently, if $|F| > 1$, then $F = \{1, 2\}$. Indeed, if $|F| > 1$, then $F = \{f, f+1\}$ and $|A_{f+1}'| = k+\ell$. Moreover, since $A_{f+1}' \cup (A_f \setminus A_f')$ is an antichain and $\width(P) \le k+\ell$, then $|A_f'| = |A_f| = k+\ell$. Finally, we have shown above that if $f > 1$, then $|A_f'| = k$.

\begin{case}
  $k + \ell \in F$.
\end{case}

Consider the poset $\hat{P}$ obtained from $P$ by reversing the $\le$ relation, that is, by letting $x \le y$ in $\hat{P}$ if and only if $y \le x$ in $P$. Clearly, the same sets form chains and antichains in both $P$ and $\hat{P}$. Let $(\hat{A}_i)_{i=1}^{k+\ell}$ be the canonical decomposition of $\hat{P}$ into antichains and observe that $\hat{A}_{k+\ell} = A_1'$. Indeed,
\[
\begin{split}
  \hat{A}_{k+\ell} & = \{x \in \hat{P} \colon \text{there is a chain $L \subseteq \hat{P}$ of length $k+\ell$ with $x = \max L$}\} \\
  & = \{x \in P \colon \text{there is a chain $L \subseteq P$ of length $k+\ell$ with $x = \min L$}\} = A_1'.
\end{split}
\]
Thus, we may assume that $|A_1'| > k$ as otherwise $\hat{P}$ falls into Case~\ref{case:k-ell-not-F}. Thus $1 \in F$. We show that this implies that $\Sigma_1 \ge S$ and consequently, by Observation~\ref{obs:Sigma-1}, that $h_k(P) > m_k(\tau_{k,n})$.

Since $|A_{k+\ell}'| = |A_{k+\ell}| > k$, then $k+\ell-1 \in F'$ and hence, by Lemma~\ref{lemma:Fp-strong}, we may assume that $\Sigma_{k+\ell-1} \ge \Sigma_{k+\ell} + k - \sqrt{k} \ge 2k-\sqrt{k}$ and $\Sigma_i \ge \Sigma_{i+1} + 2k-3\sqrt{k}$ for all $i \in F \cap [k+\ell-2]$. This yields
\begin{equation}
  \label{eq:Sigma-1-case-2}
  \Sigma_1 \ge \big( 2 + 2|F \cap [k+\ell-2]| + |(F' \setminus F) \cap [k+\ell-2]|\big) (k - 2\sqrt{k}).
\end{equation}
By our assumption that $1 \in F$ and Observation~\ref{obs:F-size},
\begin{equation}
  \label{eq:F-case-2}
  |F \cap [k+\ell-2]| \ge \max\{1, \lceil q / \ell \rceil - 2\}.
\end{equation}
It follows that either $q = 3\ell$ and we have equality in~\eqref{eq:F-case-2} or
\[
\Sigma_1 \ge \left( 1 + \frac{q+1}{\ell} \right)(k - 2\sqrt{k}) = \left(1 + \frac{q}{\ell}\right)k + \frac{k}{\ell} - 2\sqrt{k}\left(1 + \frac{q+1}{\ell}\right) \ge S;
\]
to see the last inequality, recall that $\ell \le \sqrt{k}/(300 \log_2k)$. The former (i.e., $q = 3\ell$ and equality in~\eqref{eq:F-case-2}) implies that $F = \{1, k+\ell-1, k+\ell\}$, $A_i = k+\ell$ for all $i \in F$, and $|A_i| = k$ for all $i \not\in F$. Since $A_{k+\ell}' \cup (A_{k+\ell-1} \setminus A_{k+\ell-1}')$ is an antichain and $\width(P) \le k+\ell = |A_{k+\ell}'|$, we must have $|A_{k+\ell-1}'| = |A_{k+\ell-1}| = k+\ell$ and consequently, $k+\ell-2 \in F'$. Now, \eqref{eq:Sigma-1-case-2} again yields $\Sigma_1 \ge 5(k-2\sqrt{k}) \ge S$.

\subsection{Almost extremal posets}

Summarizing the above discussion, if $h_k(P) \le m_k(\tau_{k,n})$, then $\height(P) = k+\ell$ and either $P$ or $\hat{P}$ (the poset obtained from $P$ by reversing the $\le$ relation) satisfy one of the following two lists of conditions:
\begin{enumerate}[(1)]
\item
  \label{item:extremal-1}
  $F = \{1, 2\}$, $|A_1'| = |A_2'| = k+\ell$, and $|A_i'| = k$ for every $i \ge 3$,
\item
  \label{item:extremal-2}
  $F = \{f\}$ for some $f \in [k+\ell-1]$ and $|A_i'| \ge k$ for all $i$; if $f > 1$, then $|A_f'| = k$.
\end{enumerate}
From now on, we shall have to count homogenous sets somewhat more carefully, as there are posets of either of these two types that contain fewer than $m_k(\tau_{k,n})$ chains of length $k+1$ and fewer than $m_k(\tau_{k,n})$ antichains with $k+1$ elements. (So far, we have always managed to show that our poset contains more than $m_k(\tau_{k,n})$ homogenous sets of one of the two types.)

\subsubsection{Bounding the number of antichains}

We first derive a lower bound for the number of $(k+1)$-element antichains which we shall use in both~(\ref{item:extremal-1}) and~(\ref{item:extremal-2}). To this end, for each $i \in [k+\ell-1]$, let
\[
D_{i+1} = \{x \in A_{i+1}' \colon \deg_{G_i}(x) = 2\} \subseteq A_{i+1}' \setminus B_{i+1}.
\]
Note for future reference that it follows from (the proof of) Lemma~\ref{lemma:F} that for each $i \in F$,
\begin{equation}
  \label{eq:lemma-F}
  \Sigma_i - \Sigma_{i+1} = \sum_{y \in A_{i+1}'} u_{i+1}(y)(\deg_{G_i}(y)-1) \ge 2|A_{i+1}'| - |D_{i+1}| - 2|B_{i+1}|,
\end{equation}
which improves the lower bound of $|A_{i+1}'| - |B_{i+1}|$ for $\Sigma_i - \Sigma_{i+1}$ stated in Lemma~\ref{lemma:F} whenever $D_{i+1}$ is smaller than $A_{i+1}' \setminus B_{i+1}$. On the other hand, if $D_{i+1}$ is large, then there are many $(k+1)$-element antichains in $A_i \cup D_{i+1}$, as the following lemma shows.

\begin{lemma}
  \label{lemma:C-antichains}
  Let $i \in [k+\ell-1]$ and suppose that $|A_i| = k+\ell$ and $|A_{i+1}'| = k$. If $|D_{i+1}| \ge k-k^{2/3}$, then $A_i \cup D_{i+1}$ contains at least $20(\ell-1)\binom{k+\ell}{k+1}$ antichains with $k+1$ elements that intersect $D_{i+1}$.
\end{lemma}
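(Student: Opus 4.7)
The approach is to enumerate $(k+1)$-antichains of $A_i \cup D_{i+1}$ intersecting $D_{i+1}$ by their intersection $T$ with $D_{i+1}$. For any nonempty $T \subseteq D_{i+1}$, the set $(A_i \setminus N_{G_i}(T)) \cup T$ is an antichain of size $(k+\ell)+|T|-|N_{G_i}(T)|$; whenever $|N_{G_i}(T)| \le |T|+\ell-1$ (so this size is at least $k+1$), there are exactly $\binom{k+\ell-|N_{G_i}(T)|}{k+1-|T|}$ distinct $(k+1)$-antichains with $D_{i+1}$-part equal to $T$, and different $T$'s yield different antichains. I would encode the structure via the multigraph $H$ on vertex set $A_i$ whose edges are $\{N_{G_i}(x) : x \in D_{i+1}\}$ counted with multiplicity, so $|V(H)|=k+\ell$, $|E(H)|\ge k-k^{2/3}$, and $|N_{G_i}(T)|$ equals the number of $H$-vertices covered by $T$. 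The analysis then splits on the number $p$ of parallel edge pairs in $H$.

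When $p \ge \sqrt{k}$, each parallel pair furnishes a valid $T$ with $|T|=|N_{G_i}(T)|=2$, contributing $\binom{k+\ell-2}{\ell-1} = (1-o(1))\binom{k+\ell}{k+1}$ antichains. The total $p\binom{k+\ell-2}{\ell-1} \ge \sqrt{k}(1-o(1))\binom{k+\ell}{k+1}$ comfortably exceeds $20(\ell-1)\binom{k+\ell}{k+1}$ under the standing hypothesis $\ell \le \sqrt{k}/(300\log_2 k)$ and $k$ large.

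Otherwise $p < \sqrt{k}$, and the underlying simple graph $H_s$ has at least $k - k^{2/3} - \sqrt{k}$ edges on $k+\ell$ vertices. Pick a spanning forest $F$ of $H_s$ on its non-trivial components $F_1, \ldots, F_r$ with $v_j = |V(F_j)| \ge 2$; then $V^* := \sum_j v_j \le k+\ell$, $|E(F)| = \sum_j(v_j-1) \ge k - O(k^{2/3})$, hence $r \le O(k^{2/3})$ and $\bar v := V^*/r \ge \Omega(k^{1/3})$. Each connected subset of $F_j$ of size $c$ determines (via its edge set lifted to $D_{i+1}$) a subtree $T \subseteq D_{i+1}$ with $|T|=c-1$, $|N_{G_i}(T)|=c$, contributing $\binom{k+\ell-c}{\ell-2}$ antichains; Lemma~\ref{lemma:connected-sets} yields at least $v_j-c+1$ such subsets. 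Summing,
\[
\#\text{antichains} \ge \sum_{j=1}^r \sum_{c=2}^{v_j}(v_j-c+1)\binom{k+\ell-c}{\ell-2}.
\]
Using the pointwise lower bound $\sum_{c=2}^v(v-c+1)\binom{k+\ell-c}{\ell-2} \ge \binom{v}{2}\binom{k+\ell-v}{\ell-2}$ together with Jensen's inequality (this function is convex in $v$) gives at least $r\binom{\bar v}{2}\binom{k+\ell-\bar v}{\ell-2} = V^*(\bar v-1)\binom{k+\ell-\bar v}{\ell-2}/2$, which is of order $k^{4/3}\binom{k}{\ell-2}$; via $\binom{k}{\ell-2} \sim (\ell-1)\binom{k+\ell}{k+1}/k$ this simplifies to $\Omega(k^{1/3}(\ell-1))\binom{k+\ell}{k+1}$, beating $20(\ell-1)\binom{k+\ell}{k+1}$ for $k$ sufficiently large.

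The main obstacle lies in the dense subcase of the second case, where $V^*$ is small (as low as $\Theta(\sqrt{k})$, if $H_s$ is essentially a clique), so $|E(F)| \le V^* - 1$ is also small and the spanning-forest estimate weakens. In this regime $H_s$ has average degree $\Omega(|E(H_s)|/V^*) \gg \sqrt{V^*}$, which by Turán/Kruskal--Moon-type reasoning forces many triangles (and indeed short cycles); each triangle yields a $T$ with $|T|=|N_{G_i}(T)|=3$ contributing $\binom{k+\ell-3}{\ell-1} \sim \binom{k+\ell}{k+1}$ antichains, and the triangle count comfortably exceeds $20(\ell-1)$. A careful book-keeping interleaving the subtree contributions from the sparse spanning forest with the short-cycle contributions from the dense part, together with a convex optimization over admissible component-size partitions, is what makes the argument go through uniformly.
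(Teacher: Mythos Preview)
Your auxiliary multigraph $H$ on $A_i$ (one edge $N_{G_i}(x)$ for each $x\in D_{i+1}$) is a natural dual of the paper's construction, and counting antichains via connected pieces of $H$ is the right idea. The genuine gap is that you never invoke the standing hypothesis $\width(P)\le k+\ell=|A_i|$. Since $(A_i\setminus N_{G_i}(T))\cup T$ is an antichain of size $|A_i|-|N_{G_i}(T)|+|T|$, the width bound forces $|N_{G_i}(T)|\ge |T|$ for \emph{every} $T\subseteq D_{i+1}$. In your $H$ this says that any set of $|T|$ edges covers at least $|T|$ vertices, i.e.\ every sub-multigraph of $H$ has at most as many edges as vertices. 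Hence each component of $H$ contains at most one cycle (parallel pairs included), and your ``dense subcase'' with $H_s$ a clique on $\Theta(\sqrt{k})$ vertices is simply impossible. The triangle-counting and convex-optimization patch you sketch is therefore not needed, and as written it is not a proof.

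This is exactly the mechanism the paper exploits, though the paper places its multigraph on $D_{i+1}$ rather than on $A_i$ (defining $H=\bigcup_{z\in A_i} H_z$ with $H_z$ a spanning tree on $N_{G_i}(z)\cap D_{i+1}$) and deduces $e_H(X)\le |X|$ from the same width bound. Either way one gets: if $H$ has $\ge\sqrt{k}$ cycles, unions of these (pairwise vertex-disjoint) cycles give $2^{\sqrt{k}}$ antichains of size $k+\ell$, and Lemma~\ref{lemma:KK} finishes; otherwise delete one edge per cycle to obtain a forest $H'$ with $e(H')\ge k-O(k^{2/3})$ and count connected subsets of bounded size via Lemma~\ref{lemma:connected-sets}. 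Two further remarks on your sparse subcase: the assertion $|E(F)|\ge k-O(k^{2/3})$ is unjustified until you have the one-cycle-per-component fact above; and your Jensen step is wrong as stated, since $v\mapsto\binom{v}{2}\binom{k+\ell-v}{\ell-2}$ is not convex on the whole range when $\ell\ge 3$. The paper sidesteps this by summing only over $c\le 41$, which already yields $\ge 40k$ connected sets, each worth roughly $\frac{\ell-1}{k}\binom{k+\ell}{k+1}$ antichains.
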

\begin{proof}
  For each $z \in A_i$, let $H_z$ be an arbitrary tree with vertex set $N_{G_i}(z) \cap D_{i+1}$ and let $H$ be the multigraph with vertex set $D_{i+1}$ which is the union of all $H_z$ as $z$ ranges over $A_i$. Clearly,
  \[
  e(H) \ge \sum_{z \in A_i} (\deg_{G_i}(z, D_{i+1}) - 1) \ge 2|D_{i+1}| - |A_i| \ge k-3k^{2/3},
  \]
  as $|A_i| = k+\ell$ and $\ell \ll k^{2/3}$. Note crucially that:
  \begin{enumerate}[(i)]
  \item
    \label{item:HX}
    For every $X \subseteq D_{i+1}$, the set $X \cup (A_i \setminus N_{G_i}(X))$ is an antichain.
  \item
    \label{item:eHX}
    For every $X \subseteq D_{i+1}$, we have $|N_{G_i}(X)| \le 2|X| - e_H(X)$.
  \end{enumerate}
  To see~(\ref{item:eHX}), recall that each $H_z$ is a tree and therefore
  \[
  |N_{G_i}(X)| = 2|X| - \sum_{z \in A_i} \max\{\deg_{G_i}(z,X)-1, 0\} \le 2|X| - \sum_{z \in A_i} e_{H_z}(X) = 2|X| - e_H(X).
  \]

  We first show that we may assume that $H$ contains fewer than $\sqrt{k}$ cycles (we consider two parallel edges to be a cycle). Since $\width(P) \le k+\ell \le |A_i|$, (\ref{item:HX}) implies that $|N_{G_i}(X)| \ge |X|$ for each $X \subseteq D_{i+1}$ and hence each component $T$ of $H$ has at most one cycle as otherwise $|N_{G_i}(T)| < |T|$ by~(\ref{item:eHX}). Suppose now that $X_1, \ldots, X_m$ are cycles in $H$. They belong to different components of $H$, so in particular they are vertex-disjoint. By~(\ref{item:eHX}), for any $J \subseteq [m]$, the set $X = \bigcup_{j \in J} X_j$ satisfies $|N_{G_i}(X)| = |X|$. By~(\ref{item:HX}), $A_i \cup D_{i+1}$ contains at least $2^m$ antichains with $k+\ell$ elements and thus, by Lemma~\ref{lemma:KK}, at least $2^{m-1}$ antichains with $k+1$ elements. Finally, as $\ell < \sqrt{k}/(300 \log_2 k)$, then $2^{\sqrt{k}} \gg \ell \binom{k+\ell}{k+1}$, cf.~\eqref{eq:mk-exp-sqrt-k}.

  Assume that $H$ has fewer than $\sqrt{k}$ cycles and delete from $H$ one edge in each of its cycles to obtain a forest $H'$ with $e(H') \ge e(H) - \sqrt{k} \ge k - 4k^{2/3}$. Let $N'$ be the number of sets $X \subseteq D_{i+1}$ with $|X| \le 41$ which are connected in $H'$. As $H' \subseteq H$, it follows from~(\ref{item:eHX}) that $|N_{G_i}(X)| \le |X| + 1$ for each such $X$. Thus by~(\ref{item:HX}), the number $N$ of antichains in $A_i \cup D_{i+1}$ satisfies
  \[
  \begin{split}
    N & \ge \sum_{X \subseteq D_{i+1}} \binom{k+\ell-|N_{G_i}(X)|}{k+1-|X|} \ge N' \binom{k+\ell-42}{k-40} \\
    & \ge N' \cdot \left(\frac{k-40}{k+\ell}\right)^{41} \cdot \binom{k+\ell-1}{k+1} \ge \frac{N'}{2} \cdot\frac{\ell-1}{k} \binom{k+\ell}{k+1},
  \end{split}
  \]
  where we have used the assumption that $\ell = o(k)$. Finally, let $t_1, \ldots, t_m$ be the orders of the trees constituting $H'$. Since $m = k - e(H') \le 4k^{2/3}$, it follows from Lemma~\ref{lemma:connected-sets} that
  \[
  N' \ge \sum_{c=1}^{41} \sum_{j=1}^m (t_j-c) \ge \sum_{c=1}^{41} (k - cm) \ge 41k - O(k^{2/3}) \ge 40k.\qedhere
  \]
\end{proof}

\subsubsection{Almost extremal posets of type~(\ref{item:extremal-1})}

Recall that $P$ is of type~(\ref{item:extremal-1}) if and only if $|A_i| = |A_i'| = k+\ell$ for $i \in \{1, 2\}$ and $|A_i| = |A_i'| = k$ otherwise. In particular, $q = 2\ell$ and hence $S = 3k + 50\sqrt{k}\log_2k$. We first show that $h_k(P) > m_k(\tau_{k,n})$ for each such $P$. Let $b_2 = |B_2|$ and $b_3 = |B_3|$. By Lemma~\ref{lemma:F} and \eqref{eq:lemma-F},
\[
\Sigma_1 \ge \Sigma_3 + 2|A_3'| - |D_3| - 2b_3 + |A_2'| - b_2 \ge 4k + \ell - |D_3| - b_2 - 2b_3.
\]
We may assume that $b_2, b_3 \le 2\ell\log_2k$ as otherwise Lemma~\ref{lemma:F} implies that $h_k(P) > m_k(\tau_{k,n})$. If $|D_3| < k - k^{2/3}$, then $\Sigma_1 \ge S$ and, by Observation~\ref{obs:Sigma-1}, $h_k(P) > m_k(\tau_{k,n})$. Thus, we may assume that $|D_3| \ge k - k^{2/3}$.

Let us carefully count homogenous $(k+1)$-sets in $P$. First, consider the collection of all chains of length $k+1$ obtained by taking a triple $(x_1,x_2,x_3) \in A_1' \times A_2' \times A_3'$ with $x_1 \le x_2 \le x_3$ and an arbitrary set of $k-2$ elements from some chain of length $k+\ell$ that contains $\{x_1,x_2,x_3\}$. The number of such chains containing a fixed triple is
\[
\binom{k+\ell-3}{k-2} = \frac{(k+1)k(k-1)}{(k+\ell)(k+\ell-1)(k+\ell-2)} \binom{k+\ell}{k+1} \ge \left(1 - \frac{\ell-1}{k-1}\right)^3 \binom{k+\ell}{k+1}
\]
and all chains constructed in this way are distinct. Let $N_t$ be the number of such triples. Since neither $G_1$ nor $G_2$ contain any isolated vertices, $A_2 = A_2'$, $A_3 = A_3'$, $e(G_1) \ge 2|A_2'| - b_2$, and $e(G_2) \ge 2|A_3'| - b_3$, then
\[
N_t = \sum_{x \in A_2'} \deg_{G_1}(x) \deg_{G_2}(x) \ge \sum_{x \in A_2'} (\deg_{G_1}(x) + \deg_{G_2}(x) - 1) \ge 2|A_3'| + |A_2'| - b_2 -b_3.
\]
Therefore, the number $N_c$ of chains of length $k+1$ satisfies
\[
N_c \ge (2|A_3'| + |A_2'| - b_2 - b_3) \binom{k+\ell-3}{k+1} \ge \left(1 - \frac{\ell-1}{k-1}\right)^3 (3k+\ell-b_2-b_3) \binom{k+\ell}{k+1}.
\]
To estimate the number of antichains, note that for any $i \in \{1, 2\}$ and any $Y \subseteq B_{i+1}$, the set $Y \cup (A_i \setminus N_{G_i}(Y))$ is an antichain with $k+\ell$ elements. Hence, the number $N_{a,1}$ of $(k+1)$-element antichains that are contained in either $A_1 \cup B_2$ or $A_2 \cup B_3$ satisfies
\[
\begin{split}
  N_a & \ge \sum_{i=2}^3 \sum_{Y \subseteq B_i} \binom{k+\ell-|Y|}{k+1-|Y|} \ge \sum_{i=2}^3 2^{b_i} \binom{k+\ell-b_i}{k+1-b_i} \ge \sum_{i=2}^3 2^{b_i} \left(1 - \frac{\ell-1}{k+\ell-b_i} \right)^{b_i}  \binom{k+\ell}{k+1} \\
  & \ge \left[\left(\frac{7}{4}\right)^{b_2} + \left(\frac{7}{4}\right)^{b_3}\right] \binom{k+\ell}{k+1},
\end{split}
\]
where the last inequality follows since $\ell, b_i \ll k$. Moreover, by Lemma~\ref{lemma:C-antichains}, there are additionally at least $20(\ell-1)\binom{k+\ell}{k+1}$ antichains with $k+1$ elements that contain an element of $D_3$. Thus, using the inequality $\big(1 - \frac{\ell-1}{k-1}\big)^3 \ge 1 - 3\frac{\ell-1}{k-1}$,
\begin{equation}
  \label{eq:example-1}
  \begin{split}
    h_k(P) \binom{k+\ell}{k+1}^{-1} & \ge \left(1 - \frac{\ell-1}{k-1}\right)^3 (3k + \ell - b_2 - b_3) + \left(\frac{7}{4}\right)^{b_2} + \left(\frac{7}{4}\right)^{b_3} + 20(\ell-1) \\
    & \ge 3k+\ell-b_2-b_3 + 10(\ell-1) + \left(\frac{7}{4}\right)^{b_2} + \left(\frac{7}{4}\right)^{b_3} \\
    & \ge 3k + 1 +  \left(\frac{7}{4}\right)^{b_2} - b_2 + \left(\frac{7}{4}\right)^{b_3} - b_3.
  \end{split}
\end{equation}
Finally, using the fact that $(7/4)^b - b \ge 3/4$ for every integer $b$, we see that the right hand side of~\eqref{eq:example-1} is strictly greater than $3k+2$. This completes the analysis as $m_k(\tau_{k,n}) = (3k+2)\binom{k+\ell}{k+1}$.

\subsubsection{Almost extremal posets of type~(\ref{item:extremal-2})}

Recall that $P$ is of type~(\ref{item:extremal-2}) if and only if there is some $f \in [k+\ell-1]$ such that $|A_i| = |A_i'| = k$ for all $i \neq f$, $|A_f| = k+\ell$, and $|A_f'| \ge k$. Moreover, $|A_f'| = k$ if $f \neq 1$. In particular, $q = \ell$ and hence $S = 2k + 50\sqrt{k}\log_2k$. We now show that $h_k(P) > m_k(\tau_{k,n})$ for each such $P$, unless $\ell = 1$. Let $b = |B_{f+1}|$. By~\eqref{eq:lemma-F},
\[
\Sigma_f \ge \Sigma_{f+1} + 2|A_{f+1}'| - |D_{f+1}| - 2|B_{f+1}| \ge 3k-|D_{f+1}|-2b.
\]
As before, by Lemma~\ref{lemma:F}, we may assume that $b \le 2\ell\log_2k$ since otherwise $h_k(P) > m_k(\tau_{k,n})$. If $|D_{f+1}| < k - k^{2/3}$, then $\Sigma_1 \ge \Sigma_f \ge S$, and, by Observation~\ref{obs:Sigma-1}, $h_k(P) > m_k(\tau_{k,n})$. Thus, we may assume that $|D_{f+1}| \ge k-k^{2/3}$.

Let us now carefully count homogenous $(k+1)$-sets in $P$. First, consider the collection of all chains of length $k+1$ obtained by taking an edge $xy$ of $G_f$ and an arbitrary set of $k-1$ elements from some chain of length $k+\ell$ that contains both $x$ and $y$ (such a chain exists as $A_f' = A_f$. The number of such chains containing a fixed edge is
\[
\binom{k+\ell-2}{k-1} = \frac{(k+1)k}{(k+\ell)(k+\ell-1)} \binom{k+\ell}{k+1} \ge \left(1 - \frac{\ell-1}{k}\right)^2 \binom{k+\ell}{k+1}
\]
and all chains constructed this way are distinct. Therefore, the number $N_c$ of chains of length $k+1$ satisfies
\[
N_c \ge e(G_f) \cdot \binom{k+\ell-2}{k-1} \ge \left(1-\frac{\ell-1}{k}\right)^2(2k-b)\binom{k+\ell}{k+1}.
\]
To estimate the number of antichains, note that for any $Y \subseteq B_{f+1}$, the set $Y \cup (A_f \setminus N_{G_f}(Y))$ is an antichain with $k+\ell$ elements. Hence, the number $N_{a,1}$ of $(k+1)$-element antichains that are contained in $A_f \cup B_{f+1}$ satisfies
\[
N_{a,1} \ge 2^b \binom{k+\ell-b}{k+1-b} \ge 2^b \left(1 - \frac{\ell-1}{k+\ell-b} \right)^b  \binom{k+\ell}{k+1} \ge \left(\frac{7}{4}\right)^b \binom{k+\ell}{k+1},
\]
where the last inequality holds since $\ell, b \ll k$. Moreover, by Lemma~\ref{lemma:C-antichains}, there are additionally at least $20(\ell-1)\binom{k+\ell}{k+1}$ antichains with $k+1$ elements that contain an element of $D_{f+1}$. It follows that, using the inequality $\big(1 - \frac{\ell-1}{k}\big)^2 \ge 1 - 2\frac{\ell-1}{k}$,
\begin{equation}
  \label{eq:example-2}
  h_k(P) \binom{k+\ell}{k+1}^{-1} \ge 2k-b - 4(\ell-1) + \left(\frac{7}{4}\right)^b + 20(\ell-1).
\end{equation}
As $m_k(\tau_{k,n}) = (2k+1)\binom{k+\ell}{k+1}$, we conclude that $h_k(P) > m_k(\tau_{k,n})$ unless $\ell=1$ and $b \le 1$.

\subsubsection{Almost extremal posets of type~(\ref{item:extremal-2}) when $\ell = 1$}

We finally show that $h_k(P) \ge m_k(\tau_{k,n})$ for each poset $P$ of type~(\ref{item:extremal-2}) and provide a rough structural characterization of such posets which contain precisely $m_k(\tau_{k,n})$ homogeneous $(k+1)$-sets. Since we may assume that $\ell = 1$, then $m_k(\tau_{k,n}) = 2k+1$ and $\Sigma_1$ is the number of chains of length $k+1$ in $P$. Let $b = |B_{f+1}|$ and recall that $b \le 1$. By~\eqref{eq:lemma-F},
\[
\Sigma_1 \ge \Sigma_{f+1} + 2k - |D_{f+1}| - 2b \ge 2k - b.
\]
Moreover, $A_f \cup B_{f+1}$ contains at least $2^b$ antichains with $k+1$ elements. It follows that $h_k(P) \ge 2k - b + 2^b \ge m_k(\tau_{k,n})$. Moreover, either $h_k(P) > m_k(\tau_{k,n})$ or $\Sigma_{f+1} = k$, $D_{f+1} = A_{f+1} \setminus B_{f+1}$ (i.e., each $x \in A_{f+1} \setminus B_{f+1}$ has degree two in $G_f$), and there are only $2^b$ antichains with $k+1$ elements (plus, they are both contained in $A_f \cup B_{f+1}$). This means that for every $X \subseteq A_{f+1}$ such that $X \neq \emptyset$ and $X \neq B_{f+1}$, we have $|N_{G_f}(X)| \ge |X|+1$, as otherwise $X \cup (A_f \setminus N_{G_f}(X))$ would be an additional antichain with $k+1$ elements (other than $A_f$ and $B_{f+1} \cup (A_f \setminus N_{G_f}(B_{f+1}))$, which were already counted above in the $2^b$ term). In particular, $f=1$ and $A_1' = A_1$, as otherwise $|A_f'|=|A_{f+1}'|=k$ and consequently $|N_{G_f}(A_{f+1}')| = |A_{f+1}'|$. We now show that these conditions uniquely determine the graph $G_1$.

\begin{claim}
  Suppose that $h_k(P) = m_k(\tau_{k,n})$.
  \begin{enumerate}[(i)]
  \item
    \label{item:b0}
    If $b = 0$, then $G_1$ is a path with $2k+1$ vertices.
  \item
    \label{item:b1}
    If $b = 1$, then $G_1$ is the disjoint union of a path with $2k-1$ vertices and en edge.
  \end{enumerate}
\end{claim}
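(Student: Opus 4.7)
The plan is to extract the combinatorial structure of $G_1$ from the expansion data already in hand and then invoke the hypothesis that $P$ has order dimension at most two to rule out branching. Recall that $G_1$ is bipartite with parts $A_1$ of size $k+1$ and $A_2$ of size $k$, that every $y \in A_2 \setminus B_2$ has degree two in $G_1$, and that every nonempty $X \subseteq A_2$ with $X \neq B_2$ satisfies $|N_{G_1}(X)| \ge |X|+1$. I introduce the auxiliary multigraph $H$ on vertex set $A_1$ whose edge set is $A_2 \setminus B_2$, with each degree-two $y$ contributing the edge joining its two $G_1$-neighbors. A pair of parallel edges would violate the expansion bound on the $2$-element $X$ consisting of those edges, and an $m$-cycle in $H$ would violate it on the $m$-element $X$ of its edges; hence $H$ is a simple forest. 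Its number of components equals $|V(H)| - |E(H)|$, which is $1$ in case~\ref{item:b0} (so $H$ is a tree on $k+1$ vertices) and $2$ in case~\ref{item:b1}.

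In case~\ref{item:b1}, write $B_2 = \{y^*\}$ and let $x^*$ be the unique $G_1$-neighbor of $y^*$. Applying the expansion to $X = \{y, y^*\}$ for any edge $y$ of $H$ incident to $x^*$ gives $|N_{G_1}(X)| = 2 < |X|+1$, which is impossible. Hence $x^*$ is isolated in $H$, and the other component is a tree $T$ on $A_1 \setminus \{x^*\}$ with $k$ vertices.

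The decisive step is to show that the tree ($H$ in case~\ref{item:b0}, $T$ in case~\ref{item:b1}) has maximum degree at most two. Suppose instead that some $v$ has three tree-neighbors, coming from three degree-two vertices $y_1, y_2, y_3 \in A_2$ whose other $G_1$-neighbors $x_1, x_2, x_3$ are distinct by simplicity of $H$. The subposet $Q$ induced on $\{v, x_1, x_2, x_3, y_1, y_2, y_3\}$ has only the relations $v \le y_i$ and $x_i \le y_i$. Since $P$ has order dimension at most two, so does $Q$, and there is a dual poset $Q^*$ in which $\{v, x_1, x_2, x_3\}$ and $\{y_1, y_2, y_3\}$ are chains, $v$ is incomparable to each $y_i$ and $x_i$ to $y_i$, while $x_i$ and $y_j$ are comparable for $i \neq j$. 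A short case analysis on the position of $v$ within the $x$-chain then yields a contradiction: for instance, if $v$ is the bottom of this chain, then any relation $x_i <_{Q^*} y_j$ with $j \neq i$ would give $v <_{Q^*} y_j$ by transitivity, forcing $x_i >_{Q^*} y_j$ whenever $j \neq i$; since $x_i$ is incomparable to $y_i$ in $Q^*$, the linearity of the $y$-chain then forces $y_i$ to lie above both other $y$'s (otherwise $x_i$ would transitively dominate $y_i$), and requiring this for all three $i$ demands three distinct tops of the same chain, which is absurd. The remaining positions of $v$ yield analogous contradictions. Hence the tree has maximum degree two and, being a tree, is a path.

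The conclusion then follows quickly: in case~\ref{item:b0}, $H$ is a path on $k+1$ vertices, and subdividing each of its $k$ edges by the corresponding $A_2$-vertex realizes $G_1$ as a path on $2k+1$ vertices. In case~\ref{item:b1}, the same subdivision turns $T$ into a path on $2k-1$ vertices, and adjoining the remaining edge $x^* y^*$ of $G_1$ gives the claimed disjoint union. The main obstacle is the case analysis showing that $Q$ is not two-dimensional; this is a finite transitivity check, but the interplay between the two chain orientations and the ``crown'' edges $x_i y_j$ ($i \neq j$) must be tracked with care.
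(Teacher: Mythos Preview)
Your proof is correct and follows essentially the same route as the paper: define the auxiliary multigraph $H$ on $A_1$, use the expansion hypothesis to show $H$ is a simple forest with the right number of components, and then invoke the order-dimension-two hypothesis to rule out a vertex of degree three. The only substantive difference is in this last step: the paper observes that a degree-three vertex in $H$ forces the $1$-subdivision of $K_{1,3}$ as an induced subgraph of $G_1$ and appeals to the (standard) fact that this graph is not a permutation graph, whereas you unpack this by an explicit transitivity chase in the dual poset $Q^*$. Your argument is more self-contained; the paper's is shorter but relies on an external fact. Your treatment of case~(\ref{item:b1}) --- showing directly that $x^*$ is isolated in $H$ via the two-element set $\{y,y^*\}$ --- is also slightly cleaner than the paper's, which argues instead via the $G_1$-neighborhood of a whole component of $H$.
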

\begin{proof}
  Let $H$ be the auxiliary multigraph on $A_1$ where the multiplicity of each pair $xy$ is the number of common $G_1$-neighbors of $x$ and $y$ in $A_2$. As we have assumed that $D_2 = A_2 \setminus B_2$, it follows that $e(H) = |A_2 \setminus B_2| = k - b$. Moreover, the condition $|N_{G_1}(X)| \ge |X|+1$ implies that $H$ is a forest. (Here again we consider two parallel edges to be a cycle.) We claim that each tree in $H$ is a path. If this is not the case, then $H$ would have a vertex of degree at least $3$ and thus $G_1$ would contain the $1$-subdivision of $K_{1,3}$ as an induced subgraph. But this is not possible as both $G_1$ and its complement are comparability graphs (since $P$ is a poset of order dimension at most two) and one can check that the $1$-subdivision of $K_{1,3}$ does not have this property.

  Now, (\ref{item:b0}) follows since $b=0$ implies that $H$ is a path with $k+1$ vertices and hence $G_1$ is a path with $2k+1$ vertices. To see~(\ref{item:b1}), note first that $b=1$ implies that $H$ has two connected components. The unique vertex $z \in B_2$ has a $G_1$-neighbor in one of these components. Denote it by $C$ and let $X = N_{G_1}(C)$. One easily checks that $|X| = |N_{G_1}(X)| = |C|$ and hence $X = \{z\}$ and $|C|=1$, as we have assumed that $X = \emptyset$ and $X = B_2$ are the only subsets of $A_2$ with $|X| \le |N_{G_1}(X)|$. It follows that $H$ is the union of a path with $k$ vertices and an isolated vertex and hence $G_1$ is the union of a path with $2k-1$ vertices and an edge.
\end{proof}

Finally, the following lemma (where we take $i = 2$ and $j = k+1$) shows that $P \setminus A_1$ may be partitioned into $k$ chains, as otherwise $\Sigma_2 > k$.

\begin{lemma}
  \label{lemma:disjoint-chains}
  Suppose that $i, j \in [k+\ell]$ with $i \le j$ are such that $|A_i'| = \ldots = |A_j'| = k$. There is some $d \ge 0$ such that:
  \begin{enumerate}[(i)]
  \item
    \label{item:disjoint-chains-one}
    There are pairwise disjoint chains $L_1, \ldots, L_{k-d}$ of length $j - i + 1$ with $\min L_p \in A_i'$ and $\max L_p \in A_j'$ for each $p \in [k-d]$ and
  \item
    $\Sigma_i \ge \Sigma_j + d$.
  \end{enumerate}
\end{lemma}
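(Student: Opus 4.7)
The plan is to prove the lemma by induction on $j-i$. The base case $j = i$ is immediate: taking $d = 0$ together with the $k$ singleton chains in $A_i' = A_j'$ fulfills both conditions, with~(ii) holding as equality.

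For the inductive step, I will apply the induction hypothesis to $(i+1, j)$ to obtain some $d' \ge 0$ and pairwise disjoint chains $L_1', \ldots, L_{k-d'}'$ of length $j-i$ from $A_{i+1}'$ to $A_j'$ with $\Sigma_{i+1} \ge \Sigma_j + d'$. Setting $T' := \{\min L_p' : p \in [k-d']\} \subseteq A_{i+1}'$, I consider the bipartite graph $H$ on $A_i' \cup T'$ whose edges are those of $G_i$, and let $k - d' - d''$ (with $d'' \ge 0$) be its maximum matching size. Each matched pair $(x, y)$ with $x \in A_i'$ and $y = \min L_p'$ yields an extended chain $L_p := \{x\} \cup L_p'$ of length $j-i+1$ from $A_i'$ to $A_j'$; the resulting $k - d$ chains, where $d := d'+d''$, are pairwise disjoint and establish~(i).

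The main task, which I expect to be the crux, is to show $\Sigma_i - \Sigma_{i+1} \ge d''$; combined with the inductive bound this yields~(ii). By K\"onig's theorem applied to $H$, there exists $S^* \subseteq T'$ with $|N_{G_i}(S^*) \cap A_i'| = |S^*| - d''$, and $N_{G_i}(S^*) \cap A_i' = N_{G_i}(S^*)$ since $G_i$ has no edges between $A_{i+1}'$ and $A_i \setminus A_i'$. Because $|A_i'| = |A_{i+1}'| = k$, every $y \in A_{i+1}'$ has a $G_i$-neighbor in $A_i'$ (as $u_{i+1}(y) \ge 1$) and every $x \in A_i'$ has a $G_i$-neighbor in $A_{i+1}'$ (as $u_i(x) \ge 1$). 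Counting edges of $G_i$ between $A_i'$ and $A_{i+1}'$, those incident to $S^*$ number at least $|S^*|$ by the first min-degree bound, and those incident to $A_i' \setminus N_{G_i}(S^*)$ number at least $k - |S^*| + d''$ by the second; the two classes are edge-disjoint because any edge from $A_i' \setminus N_{G_i}(S^*)$ can only reach $A_{i+1}' \setminus S^*$. Thus $e_{G_i}(A_i', A_{i+1}') \ge k + d''$.

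Finally, since $u_{i+1}(y) \ge 1$ for every $y \in A_{i+1}'$,
\[
\Sigma_i - \Sigma_{i+1} = \sum_{y \in A_{i+1}'} u_{i+1}(y) \bigl( \deg_{G_i}(y) - 1 \bigr) \ge e_{G_i}(A_i', A_{i+1}') - k \ge d'',
\]
which completes the induction. The delicate step is the edge-counting bound above: both the disjointness of the two edge classes and the correctness of each lower bound hinge on combining the K\"onig-defect set $S^*$ with the canonical-decomposition structure, which is precisely where the hypothesis $|A_m'| = k$ for every $m \in [i,j]$ is used in full force.
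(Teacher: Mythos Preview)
Your proof is correct and follows essentially the same route as the paper's: both argue by (reverse) induction on $i$, extend the $k-d'$ chains via a maximum matching in $G_i$ between $\{\min L_p'\}$ and $A_i'$, invoke the defect form of Hall's/K\"onig's theorem to find a set $S^*$ (the paper's $Y$) witnessing the deficiency $d'' = d-d'$, and then use the min-degree conditions on both sides of $G_i[A_i',A_{i+1}']$ to obtain $e_{G_i}(A_i',A_{i+1}') \ge k + d''$ and hence $\Sigma_i - \Sigma_{i+1} \ge d''$. Your write-up is slightly more explicit about why $G_i[A_i',A_{i+1}']$ has no isolated vertices and why the two edge classes are disjoint, but the argument is the same.
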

\begin{proof}
  We prove the claim by reverse induction on $i$. The statement is vacuously true for $i = j$ as $|A_j'| = k$. Suppose now that $i < j$ and, appealing to the inductive assumption, let $L_1', \ldots, L_{k-d'}'$ be a collection of pairwise disjoint chains of length $j-i$ with $\min L_p \in A_{i+1}'$ and $\max L_p \in A_j'$ for each $p \in [k-d']$. Let
  \[
  X = \{\min L_p \colon p \in [k-d']\} \subseteq A_{i+1}'
  \]
  and let $k-d$ be the size of the largest $G_i$-matching between $X$ and $A_i'$. Since this matching naturally extends some $k-d$ chains in $\{L_1', \ldots, L_{k-d'}'\}$ to pairwise disjoint chains $L_1, \ldots, L_{k-d}$ satisfying assertion~(\ref{item:disjoint-chains-one}) of the lemma, it is enough to show that $\Sigma_i \ge \Sigma_{i+1} + d-d'$. By Hall's theorem, there is a $Y \subseteq X$ such that $|N_{G_i}(Y)| \le |Y| - (d-d')$. Since $G_i[A_i',A_{i+1}']$ has no isolated vertices, it follows that
  \[
  e_{G_i}(A_i', A_{i+1}') \ge e_{G_i}(A_i',Y) + e_{G_i}(A_i' \setminus N_{G_i}(Y), A_{i+1}') \ge |Y| + |A_i'| - |N_{G_i}(Y)| \ge k + d-d'.
  \]
  Consequently,
  \[
  \Sigma_i = \sum_{x \in A_{i+1}'} u_{i+1}(x) \ge \Sigma_{i+1} + e_{G_i}(A_i', A_{i+1}') - |A_{i+1}'| \ge \Sigma_{i+1} + d-d'.\qedhere
  \]
\end{proof}

This completes the proof of Theorem~\ref{thm:posets}.

\section{Concluding remarks}

\label{sec:concluding-remarks}

In this paper, we have determined the minimum number of monotone subsequences of length $k+1$ in a sequence of $n \le k^2 + k^{3/2} / (300\log_2k)$ numbers for all sufficiently large $k$. This minimum, which we denoted by $m_k(n)$, is achieved by taking $k$ increasing (decreasing) sequences of lengths $\lfloor n/k \rfloor$ or $\lceil n/k \rceil$ in such a way that there is no decreasing (increasing) subsequence of length $k+1$. One such sequence is $\tau_{k,n}$, defined in Section~\ref{sec:introduction}. Moreover, we have shown that if $n \neq k^2+k+1$, then no extremal sequence contains both increasing and decreasing subsequences of length $k+1$. Our results provide strong evidence supporting Conjecture~\ref{conj:main}, which asserts that the above statements remain true for all pairs of $k$ and $n$. It is also worth mentioning that, although we have not stated it explicitly, our proof establishes a stability statement of the following form: If a sequence of $n$ numbers is not `close' to a union of $k$ increasing (decreasing) sequences of almost equal lengths, then it contains `many' more than $m_k(n)$ monotone subsequences of length $k+1$.

Since we are still far from proving Conjecture~\ref{conj:main}, one may ask to determine at least the asymptotic behavior of the function $m_k(n)$. Let $\mu_k(n) = m_k(n) \binom{n}{k+1}^{-1}$. 
Then Conjecture~\ref{conj:main} suggests that $\mu_k(n) = (1+o(1))k^{-k}$. Standard averaging arguments can be used to show that $\mu_k(n)$ is 
non-decreasing in $n$. Therefore, the theorem of Erd{\H{o}}s and Szekeres implies that

\begin{equation}
  \label{eq:mukn-lower}
  \mu_k(n) \ge \mu_k(k^2+1) = \binom{k^2+1}{k+1}^{-1} \sim \sqrt{\frac{2\pi e}{k}} \cdot (ek)^{-k}.
\end{equation}
Our main theorem yields an improvement of~\eqref{eq:mukn-lower} by a factor of (only) $2^{\Theta(\sqrt{k})}$ and it would be interesting to improve this lower bound further.

We find very promising the prospect of studying Erd{\H{o}}s--Rademacher-type problems in other settings. In principle, one can investigate such extensions for any extremal or Ramsey-type result. Some motivation for studying these problems 
comes from the recently renewed interest in `supersaturation' results, which have been used in conjunction with the `transference' theorems of Conlon and Gowers~\cite{CoGo} and of Schacht~\cite{Sc} as well as the `hypergraph containers' theorems of 
Balogh, Morris, and the first author~\cite{BaMoSa} and of Saxton and Thomason~\cite{SaTh} to prove numerous `sparse random analogues' of classical extremal and Ramsey-type results. 

\bibliographystyle{amsplain}
\bibliography{mono-seq}

\end{document}